\documentclass[11pt]{article}
\usepackage[utf8]{inputenc}
\usepackage[T1]{fontenc}
\usepackage{lmodern}
\usepackage{amsmath,amssymb,amsthm,mathtools}
\usepackage{mathrsfs}
\usepackage{enumitem}
\usepackage[margin=1.12in]{geometry}
\usepackage[colorlinks=true,linkcolor=blue,citecolor=blue,urlcolor=blue]{hyperref}
\newtheorem{theorem}{Theorem}
\newtheorem{lemma}[theorem]{Lemma}
\newtheorem{conjecture}[theorem]{Conjecture}
\newtheorem{proposition}[theorem]{Proposition}
\newtheorem{note}[theorem]{Note}
\newtheorem{claim}[theorem]{Claim}
\newtheorem{remark}[theorem]{Remark}
\newtheorem{corollary}[theorem]{Corollary}
\newtheorem{example}[theorem]{Example}
\newtheorem{definition}[theorem]{Definition}

\newcommand{\cO}{\mathcal O}
\newcommand{\cI}{\mathcal I}
\newcommand{\cE}{\mathscr E}
\newcommand{\cF}{\mathscr F}

\newcommand{\Ext}{\operatorname{Ext}}

\newcommand{\Ann}{\operatorname{Ann}}

\newcommand{\Supp}{\operatorname{Supp}}
\newcommand{\im}{\operatorname{im}}

\title{Classification of Smooth Minimal K\"ahler Fourfolds Without Effective Divisors and Surfaces}
\author{Pisya Vikash}
\date{\today}

\begin{document}
\maketitle
\begin{abstract}
We prove that if \(X\) is a compact K\"ahler fourfold with pseudo--effective canonical bundle and no subvarieties of codimension one or two, then \(K_X\) is a torsion line bundle. By the Beauville--Bogomolov decomposition theorem, it follows that \(X\) is either a quotient of a complex torus or an irreducible holomorphic symplectic manifold.
\end{abstract}

\medskip
\noindent\textbf{2020 Mathematics Subject Classification.}
Primary 32J27; Secondary 32Q15, 32Q57, 14E30.

\medskip
\noindent\textbf{Key words and phrases.}
Compact K\"ahler manifolds, simple manifolds, smooth minimal models,
pseudo-effective canonical bundle, nef canonical bundle, Hard Lefschetz theorem,
multiplier ideals, reflexive sheaves, extension classes.

\section{Introduction}

The birational classification of compact K\"ahler manifolds is one of the
central problems in complex geometry. In the projective case, Mori theory
provides a powerful framework for this problem. One separates the uniruled case,
which is governed by rational curves and contractions, from the non-uniruled
case, where one expects the existence of a minimal model. The final step is
abundance: for a minimal model, the canonical bundle should be semiample and the
pluricanonical system should define the basic fibration controlling the
geometry of the manifold.

For compact K\"ahler manifolds, the corresponding classification theory is much
more subtle. The absence of algebraic methods makes the construction of
contractions, the study of positivity, and the proof of abundance considerably
harder. In dimension three, this program was initiated by Campana and Peternell
in their work on Mori theory for compact K\"ahler threefolds
\cite{CP97,Pet98,Pet01}, and was developed further by H\"oring and Peternell
\cite{HP16}. The abundance theorem of Campana--H\"oring--Peternell
\cite{CampanaHoringPeternell2016}, together with its erratum and addendum
\cite{CHPErratum}, shows that the remaining difficulties are concentrated
around manifolds which carry very few analytic subvarieties.

This paper studies a four-dimensional version of this limiting situation. We
are interested in compact K\"ahler fourfolds which contain no divisors and no
surfaces. Equivalently, we exclude analytic subvarieties of codimension \(1\)
and \(2\). In dimension four this is the natural minimal-model hypothesis:
divisors are the loci detected by contractions and pluricanonical systems,
while surfaces are the expected exceptional loci of small birational
modifications. Thus excluding divisors and surfaces removes precisely the
low-codimension geometry on which the usual Mori-theoretic mechanisms operate,
while still allowing curves. Such manifolds are natural test objects for the
K\"ahler minimal model program: the usual sources of birational geometry,
namely divisorial contractions, surface exceptional loci, and algebraic
reductions, are absent. Thus the problem becomes a classification problem for
smooth minimal models in a setting where there are no low-codimension
subvarieties available.

The main point of the paper is that, under the assumption that \(K_X\) is
pseudo-effective, this absence of divisors and surfaces imposes very strong
restrictions on the canonical bundle. In particular, we show that the canonical
bundle is forced into the expected minimal-model behavior. The proofs combine
positivity of pseudo-effective line bundles, Hard Lefschetz theorems with
multiplier ideals, extension classes, and elementary arguments with reflexive
sheaves. The resulting statement is a classification theorem for smooth minimal
K\"ahler fourfolds without divisors or surfaces.

This perspective is closely related to the theory of simple compact K\"ahler
manifolds. Recall that a compact K\"ahler manifold \(X\) is called simple if a
very general point of \(X\) is not contained in any proper positive-dimensional
analytic subvariety of \(X\). Equivalently, \(X\) admits no covering family of
positive-dimensional analytic subvarieties. Simple manifolds are the extreme
case in which neither the algebraic reduction, nor a Mori-theoretic contraction,
nor a nontrivial pluricanonical fibration produces a covering family of
subvarieties. They therefore form a natural boundary case for the K\"ahler
birational classification program.

Campana introduced and studied simple compact K\"ahler manifolds in several
works; see for example
\cite{campana1983densite,Campana2004Orbifolds,campana2006isotrivialite,
Campana2011Orbifoldes,CampanaDemaillyVerbitsky2014,Campana2026Bogomolov}.
Related structural results and applications to threefolds and hyperK\"ahler
manifolds appear in
\cite{CampanaPeternell2000,CampanaPeternell2001Kummer,
CampanaHoringPeternell2016}. The guiding expectation
is that simple compact K\"ahler manifolds should be built from the standard
irreducible pieces of K\"ahler geometry, namely complex tori and irreducible
holomorphic symplectic manifolds.

One formulation of this expectation is the following conjectural picture of
Campana, Demailly and Verbitsky, whose three-dimensional case was proved in
\cite{CampanaDemaillyVerbitsky2014}.

\begin{conjecture}\label{conj:simple-kahler}
Let \(X\) be a simple compact K\"ahler manifold. Then either \(X\) has a finite
\'etale cover which is bimeromorphic to a complex torus, or
\(H^0(X,\Omega_X^2)\) is generated by a holomorphic \(2\)-form \(\sigma\) which
is generically symplectic; that is, \(\dim X=2m\) is even and
\[
    \sigma^{\wedge m}\neq 0
\]
generically. In particular, one should have \(\kappa(X)=0\). If \(\dim X\) is
odd, then \(X\) should be bimeromorphic to a complex torus, possibly after
passing to a finite \'etale cover.
\end{conjecture}

A more rigid form of the same philosophy concerns manifolds with no analytic
subvarieties at all.

\begin{conjecture}\label{conj:no-subvarieties}
Let \(X\) be a compact K\"ahler manifold which does not contain any nontrivial
analytic subvarieties. Then \(X\) should be either a complex torus or an
irreducible hyperK\"ahler manifold.

In particular, \(K_X\) should be trivial in this case, and the two cases should
be distinguished by whether \(q(X)>0\) or \(q(X)=0\).
\end{conjecture}

Our results should be viewed as a contribution to this classification picture,
rather than only as an isolated verification of Conjecture
\ref{conj:no-subvarieties}. We work under weaker assumptions than the complete
absence of analytic subvarieties: we allow curves, but exclude divisors and
surfaces. This is the correct four-dimensional condition for ruling out the
low-codimension geometry that appears in the minimal model program. Under the
additional assumption that \(K_X\) is pseudo-effective, we obtain the expected
classification behavior for the canonical bundle. In particular, the results
imply the four-dimensional case of Conjecture \ref{conj:no-subvarieties} in the
pseudo-effective case.

The paper also includes examples showing that the hypotheses are close to
optimal. In particular, we construct compact K\"ahler fourfolds admitting
elliptic fibrations whose total spaces have no divisors and no surfaces, but
which do contain curves. These examples show that excluding divisors and
surfaces is strictly weaker than excluding all positive-dimensional
subvarieties, and they clarify the role of the canonical bundle in the
classification problem.

\begin{note}
During the preparation of this article, Campana posted the preprint
\cite{Campana2026Bogomolov}, which proves a closely related
four-dimensional result under the stronger hypothesis that \(X\) has no
nontrivial analytic subvarieties. The present paper was written independently
and proves a different, slightly more flexible pseudo-effective statement: we
allow curves, but exclude divisors and surfaces. Thus our main theorem implies
the four-dimensional pseudo-effective case of
Conjecture~\ref{conj:no-subvarieties}, while also applying to fourfolds which
are not covered by the strict no-subvariety hypothesis; see
Section~\ref{ex:gen} for examples. The method is also different and comparatively elementary. It is organized
around Hard Lefschetz with multiplier ideals, reflexive extension classes, and
the absence of codimension \(1\) and \(2\) subvarieties.
\end{note}

\subsection{Main Results}
The main result that we prove is the following, which implies the conjecture \ref{conj:no-subvarieties} in dimension $4$ when $K_X$ is pseudo-effective, together with \cite[Theorem 2.8]{Vikash2026ClassificationOP}.

\begin{theorem}\label{thm:thmmain}
    Let $X$ be a compact K\"ahler manifold of dimension $4$, with pseudo--effective canonical bundle. Assume that, 
    \begin{itemize}
        \item $X$ has no codimension $1$ subvarieties;
        \item $X$ has no codimension $2$ subvarieties.
    \end{itemize}
    Then, $K_X$ is torsion.
\end{theorem}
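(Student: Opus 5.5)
Our plan is to combine the divisorial Zariski decomposition of the pseudo-effective class $c_1(K_X)$ with the absence of low-codimension subvarieties, first to show that $K_X$ is nef with $K_X^2 \equiv 0$, and then to pass from numerical triviality to torsion.

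\emph{Step 1: $K_X$ is nef.} Choose a singular hermitian metric $h$ on $K_X$ with positive curvature current $T=i\Theta_h\geq 0$. By Siu decomposition, $T=\sum \lambda_j [D_j]+R$, where the $D_j$ are irreducible divisors and $R$ has Lelong sublevel sets $E_c(R)$ of codimension $\geq 2$. There are no divisors, so $T=R$. Each $E_c(T)$ is an analytic subvariety, and since there are no surfaces or divisors it is a finite union of curves and points. We then use a minimal-singularity metric, or Boucksom's divisorial Zariski decomposition: the negative part is supported on divisors, hence vanishes, so $c_1(K_X)$ is nef in codimension one, i.e.\ modified nef. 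To upgrade this to nef, we use that the non-nef locus of a modified nef class is a countable union of subvarieties. A curve in the non-nef locus would have to move in a family covering a surface, or be contained in a surface, which is excluded. More concretely, Păun's criterion / Demailly regularization reduces nefness to checking $\int_Y c_1(K_X)^{\dim Y}\geq 0$ on subvarieties $Y$ meeting the Lelong locus. These $Y$ are only curves, and we would handle them via the Demailly--Păun characterization of the Kähler cone together with the fact that the non-nef locus is covered by curves along which the class is negative. This step is the one I expect to be the main obstacle, because excluding isolated negative curves requires an argument such as bend-and-break analogues or Boucksom's result that $\mathrm{NNef}$ has no isolated curve components for modified nef classes in this situation. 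My first attempt would be to show that $E_+(\alpha)$ is the non-Kähler locus, whose components have dimension $\geq 1$ and which is controlled by Collins--Tosatti; then to use the Collins--Tosatti description $\text{Null}(\alpha)=E_{nK}(\alpha)$ for nef-and-big perturbations $\alpha+\varepsilon\omega$.

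\emph{Step 2: numerical dimension.} With $K_X$ nef, if $K_X^4>0$ then $K_X$ is big, $X$ is Moishezon and hence projective, and therefore it has divisors, a contradiction. So $\nu(K_X)\leq 3$. By Collins--Tosatti, the null locus of a nef class with $\alpha^4=0$ relates to subvarieties $V$ with $\int_V\alpha^{\dim V}=0$. To exclude $\nu=3,2$ we would use Hard Lefschetz with multiplier ideals (Demailly--Peternell--Schneider): for a nef $K_X$ with minimal metric $h$ and $\mathcal I(h)=\cO_X$ (the Lelong numbers vanish away from a set of codimension $\geq 3$, and by Skoda the multiplier ideal is trivial off the curves, hence reflexive rank-one and trivial), the surjection $H^0(X,\Omega^{4-q}\otimes K_X)\to H^q(X,K_X\otimes K_X)$ yields sections of $\Omega^{p}_X\otimes K_X$. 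These give reflexive subsheaves and extension classes whose degeneracy loci are divisors or surfaces, hence empty. As a result, $\Omega^1_X$ splits off pieces numerically governed by $K_X$, which forces $c_1(K_X)^2=0$ through the Bogomolov type inequality $c_1^2\cdot\omega^2$ being compatible with $c_2$. The goal is $\nu(K_X)\le 1$, and then $\nu=0$: if $\nu=1$, the class $c_1(K_X)$ is a nonzero nef class with square zero, and a holomorphic 1-form or 3-form twisted section produces a foliation whose leaves, by Brunella-type algebraicity, would give divisors or surfaces.

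\emph{Step 3: torsion.} Once $c_1(K_X)=0$ in $H^{1,1}$, $K_X$ is numerically trivial. By Beauville--Bogomolov the manifold $X$ is Ricci-flat, and $K_X$ is torsion by the decomposition theorem, since numerically trivial canonical bundles of compact Kähler manifolds are torsion.
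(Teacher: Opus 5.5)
\textbf{Step 1 has a genuine gap.} Part of your reduction is sound. With no divisors, the Siu decomposition has no divisorial part, $c_1(K_X)$ is modified nef, and the Lelong upper-level sets consist of curves and points. P\u{a}un's criterion then reduces non-nefness to a curve $C$ with $K_X\cdot C<0$. But you never rule such a curve out. The heuristic that a curve in the non-nef locus ``must move in a family covering a surface'' is false for general modified nef classes: a movable divisor on a threefold can be negative on a rigid flopping curve. Collins--Tosatti applies only to nef and big classes, so it cannot be used on $c_1(K_X)+\varepsilon[\omega]$ before nefness is known.

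The missing idea is to exploit that the class is $K_X$ itself. This is what the paper does in Lemma~\ref{lem:pseff-canonical-nef-no-divisors-surfaces}:
\begin{enumerate}[label=\textup{(\roman*)}]
\item Cao--H\"oring supplies a \emph{rational} curve $f\colon\mathbb P^1\to X$ with $K_X\cdot C<0$. Rationality matters: for a curve of genus $g\geq 1$ the bound $-K_X\cdot C+4(1-g)$ need not be positive.
\item The estimate $\dim_{[f]}\operatorname{Hom}(\mathbb P^1,X)\geq 4-K_X\cdot C\geq 5$ makes the image cycles move in a family of dimension at least $2$.
\item Each member has negative degree against $\alpha=c_1(K_X)+\varepsilon[\omega]$. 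Hence it lies in the singular locus of a K\"ahler current with analytic singularities in $\alpha$.
\item That locus has dimension at most $1$, so it cannot contain a non-constant family of curves.
\end{enumerate}

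\textbf{Step 2 is not an argument.} Beyond excluding $\nu(K_X)=4$, nothing is proved. Hard Lefschetz produces twisted forms only once you know some $H^q(X,K_X^{m+1})\neq 0$. To conclude, you need this for infinitely many $m$, so that \eqref{eq:twisted-sections-divisor-torsion} forces $K_X$ to be torsion. Your sketch uses only $m=1$ and supplies no such nonvanishing. The statements about degeneracy loci splitting $\Omega_X^1$, a Bogomolov-type inequality forcing $c_1(K_X)^2=0$, and Brunella-type algebraicity of leaves are placeholders.

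Your claim $\mathcal I(h)=\cO_X$ is also wrong. An ideal cosupported on curves has trivial double dual but is not reflexive (think of $\mathfrak m_x$). What is true, and what the paper uses in Lemma~\ref{lem:finite-support-cohomology}, is that such an ideal does not change $H^q$ for $q\geq 3$ and gives surjections for $q=2$.

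The paper never passes through $\nu(K_X)$. It shows $q(X)\in\{0,4\}$ and then splits on $\chi(X,\cO_X)$:
\begin{itemize}
\item If $\chi\leq 0$, then $h^{3,0}\geq 2$. Lemma~\ref{thm:main} uses a holomorphic two-form and two three-forms to build extensions that Proposition~\ref{prop:foliation} shows are non-split. This gives nonzero classes in $H^1(X,K_X^{-m})\simeq H^3(X,K_X^{m+1})^\vee$ for an unbounded sequence of $m$.
\item If $\chi>0$, Hirzebruch--Riemann--Roch together with the Miyaoka--Yau inequality for nef $K_X$ gives $\chi(X,K_X^{m})\geq\chi(X,\cO_X)>0$ for all $m\geq 1$. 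Hence $H^2(X,K_X^{m+1})\neq 0$ for all large $m$, and Proposition~\ref{prop:propleq1} concludes.
\end{itemize}
These Euler-characteristic inputs are exactly the source of nonvanishing your Step 2 lacks. Your Step 3 is correct, but it is never reached.
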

It would be interesting to know whether the assumption excluding codimension
\(2\) subvarieties can be removed. The following simple examples illustrate
the hypotheses.

\begin{example}
A four-dimensional complex torus \(T\) with no positive-dimensional proper
analytic subvarieties satisfies the hypotheses of Theorem
\ref{thm:thmmain}. In this case
\[
    K_T\simeq \mathcal O_T,
\]
so the conclusion is immediate. Such examples exist due to \cite{bandman2023simple}.
\end{example}

\begin{example}
The condition excluding divisors cannot be dropped. Let \(B\) be a
three-dimensional complex torus with no positive-dimensional proper analytic
subvarieties, and let \(C\) be a smooth curve of genus \(2\). Set
\[
    X:=B\times C.
\]
Then \(X\) is a compact K\"ahler fourfold, \(K_X\simeq \operatorname{pr}_C^*K_C\)
is pseudo-effective but not torsion, and \(X\) has no surfaces. However,
\(X\) contains divisors, for instance \(B\times\{p\}\) for \(p\in C\).
\end{example}

\begin{example}
The absence of divisors does not by itself exclude surfaces. If \(S_1\) and
\(S_2\) are very general non-projective K3 surfaces, then
\[
    X:=S_1\times S_2
\]
has no divisors, but contains surfaces such as \(S_1\times\{p\}\) and
\(\{q\}\times S_2\). This illustrates why the codimension \(2\) assumption is
a separate condition.
\end{example}
To prove the above theorem we break the proof into important steps. The first setp is the following Lemma. 
\begin{lemma}\label{thm:main} Let $X$ be a connected compact K\"ahler fourfold and set $K:=K_X$. Assume that:
\begin{enumerate}[label=\textup{(\arabic*)}]
\item $X$ has no irreducible proper analytic subvarieties of complex codimension-one or $2$;
\item $K$ is pseudo--effective;
\item $h^0(X,\Omega_X^3)\geq 2$.
\end{enumerate}
Then $K_X \text{ is torsion}$.
\end{lemma}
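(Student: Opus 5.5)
Since $\Omega_X^3 \simeq T_X\otimes K_X$ on a fourfold, the natural strategy is to use two linearly independent holomorphic $3$-forms to produce sections of $K_X$, or a trivialization of a power of $K_X$.

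\textbf{Step 1: take the wedge product.} Choose linearly independent $\omega_1,\omega_2\in H^0(X,\Omega_X^3)$. Each $\omega_i$ corresponds to a twisted vector field $v_i\in H^0(X,T_X\otimes K_X)$. Consider the wedge product
\[
\omega_1\wedge\omega_2 \in H^0(X,\Lambda^2(T_X\otimes K_X))=H^0(X,\Lambda^2T_X\otimes K_X^{2})\simeq H^0(X,\Omega_X^2\otimes K_X),
\]
using $\Lambda^2T_X\simeq \Omega_X^2\otimes K_X^{-1}$.

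\textbf{Step 2: locate where the forms degenerate.} The zero locus of a nonzero section of $\Omega^3_X$ has codimension at least $3$, since there are no divisors or surfaces; so each $\omega_i$ vanishes at most along curves and points. Hence $v_i$ is a saturated map $K_X^{-1}\to T_X$ away from codimension $3$, and $\mathcal F_i:=\mathcal O\cdot v_i$ is a rank one subsheaf. The locus where $v_1,v_2$ are proportional is a determinantal locus. Either it is everything, or it is the degeneracy locus of $K_X^{-1}\oplus K_X^{-1}\to T_X$, whose expected codimension is $3$. In the latter case $\omega_1\wedge\omega_2$ is a nonzero section of $\Lambda^2$, giving a rank two subsheaf $\mathcal G\subset T_X$ with $\det\mathcal G\supseteq K_X^{-2}$ off codimension $3$.

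\textbf{Step 3: Case A, $v_1,v_2$ generically proportional.} Then $\omega_2=f\omega_1$ with $f$ meromorphic. The polar and zero divisors of $f$ must be supported on the zeros of $\omega_1$, which have codimension at least $3$. So $f$ is holomorphic on $X$ minus a codimension-$3$ set, hence constant by Hartogs. This contradicts independence, so Case A is impossible.

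\textbf{Step 4: Case B, $\mathcal G$ has rank two.} Its saturation is a reflexive rank $2$ subsheaf of $T_X$, and it is a foliation: integrability fails only through the O'Neill tensor, which is a section of $\Lambda^2\mathcal G^{*}\otimes T_X/\mathcal G$. Using $\mathcal G\supseteq K_X^{-1}\oplus K_X^{-1}$, I would show $\det \mathcal G^{*}\otimes T_X/\mathcal G$ has no sections, so $\mathcal G$ is integrable. This is where I expect trouble. Since $\mathcal G$ is a foliation, there are two possibilities:
\begin{itemize}
\item The leaves are algebraic. Their closures would be surfaces, which is excluded.
\item The leaves are non-algebraic. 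Then I would apply the Hard Lefschetz theorem with multiplier ideals to a singular metric $h$ on $K_X$ with positive curvature current, given by pseudo-effectivity. Because there are no divisors, the Siu decomposition of the current $T\in c_1(K_X)$ has no divisorial part. Also $\mathcal I(h^m)$ is cosupported in codimension at least $3$, hence trivial after reflexive hull arguments. So $T$ is a nef-in-codimension-$2$ class with vanishing Lelong numbers in codimension at most $2$.
\end{itemize}

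\textbf{Step 5: show $c_1(K_X)$ is numerically trivial.} Combine this with the determinant identity $\det\mathcal G\simeq K_X^{-2}$ up to codimension $3$, where equality holds because no divisors exist. Together with $c_1(T_X)=-c_1(K_X)$ and the quotient $T_X/\mathcal G$ of determinant $K_X$, I would obtain $K_X\cdot\omega^3=0$ for a Kähler class $\omega$. The argument runs through extension-class considerations for $0\to\mathcal G\to T_X\to\mathcal Q\to 0$, showing that the extension splits or contradicts pseudo-effectivity. A pseudo-effective class with zero degree against $\omega^3$ is zero, so $K_X\equiv 0$. Then, by the Beauville--Bogomolov decomposition for Kähler manifolds with numerically trivial canonical class, $K_X$ is torsion.

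The main obstacle is Step 5: controlling the sign of $c_1(K_X)\cdot\omega^3$ from the foliation data, which should require Hard Lefschetz with multiplier ideals to lift classes in $H^{1}(X,\Omega^{3}\otimes K_X)$ to holomorphic objects.
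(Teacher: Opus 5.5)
Your Steps 1--3 are fine; Step 3 is the same meromorphic-ratio argument the paper uses to rule out proportional $v_1,v_2$. Steps 4--5, however, carry all the content, and neither contains a working mechanism, so there is a genuine gap.

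\textbf{Integrability in Step 4.} Since $K^{-2}\to\det\mathcal G$ is nonzero and there are no divisors, $\det\mathcal G\simeq K^{-2}$. Hence $\det(T_X/\mathcal G)\simeq K$, and the O'Neill tensor is a section of (a subsheaf of) $K^2\otimes(T_X/\mathcal G)^{\vee\vee}\simeq \Ann(\mathcal G)\otimes K^3\subset\Omega^1_X\otimes K^3$. Nothing in the hypotheses makes this group vanish. Indeed, under the contrary assumption the paper's proof produces nonzero sections of groups $H^0(X,\Omega^1_X\otimes K^m)$. No Demailly-type Frobenius theorem rescues you either. Here $\mathcal G$ is the kernel of the twisted $2$-form $v_1\wedge v_2\in H^0(X,\Omega^2_X\otimes K)$, so the theorem would need $K^{-1}$ pseudo-effective. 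Together with pseudo-effectivity of $K$, that already says $c_1(K)_{\mathbb R}=0$, which is your conclusion.

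\textbf{Step 5.} This is not an argument. The identities $\det\mathcal G\simeq K^{-2}$ and $\det(T_X/\mathcal G)\simeq K$ hold whatever $K$ is, so they say nothing about $K\cdot\omega^3$. You also never say which extension class is computed, or why its splitting would bear on the degree of $K$. Separately, a multiplier ideal cosupported in codimension $\geq 3$ is not trivial; only its reflexive hull is. What is true (Lemma~\ref{lem:finite-support-cohomology}) is that such an ideal does not change $H^q$ for $q\geq 3$.

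\textbf{What is missing.} You need the holomorphic $2$-form, and a mechanism that turns infinitely many nonvanishing twisted groups into torsion.
\begin{itemize}
\item Since $X$ has no divisors it is non-projective, so there is $0\neq\sigma\in H^0(X,\Omega^2_X)$. One has $\sigma^2=0$, since otherwise $\sigma^2$ trivializes $K$.
\item This gives $\cE=\Ann(\ker\sigma)\subset\Omega^1_X$ with $\det\cE\simeq\cO_X$, and $\cF=\ker\sigma$ with $\det\cF\simeq K^{-1}$.
\item Your wedge $v_1\wedge v_2$ yields a contradiction only when both $v_i$ lie in $\cF$, because then it is a section of $\det\cF\otimes K^2\simeq K$.
\item Otherwise the paper contracts: $\beta_i=\iota_{v_i}\sigma\in H^0(X,\cE\otimes K)$. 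In the only surviving case this gives an extension $0\to K^{-1}\to\cE\to I_Z\otimes K\to 0$.
\item That extension is non-split by Proposition~\ref{prop:foliation}, which combines Demailly's Frobenius theorem for $\Omega^1_X\otimes K^{-m}$ (where the twist \emph{is} the right one) with Pereira--Rousseau--Touzet. Hence $0\neq\xi_2\in H^1(X,K^{-2})\simeq H^3(X,K^3)^\vee$.
\item Hard Lefschetz with multiplier ideals then gives $\alpha_m\in H^0(X,\Omega^1_X\otimes K^m)$. Each $\alpha_m$ produces a new non-split extension of $\cE$ or of $\cF$ (the latter uses $v_2\in H^0(X,\cF\otimes K)$), with class in $H^1(X,K^{-2m})$ or $H^1(X,K^{-2m-1})$. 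This continues for infinitely many $m$.
\item The conclusion then comes from \cite[Proposition~2.6]{AH22}, not from showing $K\cdot\omega^3=0$.
\end{itemize}
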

This lemma is useful in its own right: the examples in Section~\ref{ex:gen} show that
its hypotheses occur naturally. We next study the restrictions imposed on the irregularity of $X$ by the assumption that $X$ contains no irreducible proper analytic subvarieties of complex codimension one and two. The Lemma \ref{thm:main} is enough to prove Conjecture \ref{conj:no-subvarieties}.
\begin{lemma}\label{thm:0or4}
    Let $X$ be a compact K\"ahler manifold of dimension $4$, with pseudo--effective canonical bundle. Assume that, 
    \begin{itemize}
        \item $X$ has no codimension $1$ subvarieties;
        \item $X$ has no codimension $2$ subvarieties.
    \end{itemize}
    Then, $q(X)= 0 \text{ or } 4$.
\end{lemma}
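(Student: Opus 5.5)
We will use the Albanese map $\alpha\colon X\to \operatorname{Alb}(X)$, where $\dim \operatorname{Alb}(X)=q:=q(X)$, and show that every value $1\le q\le 3$ leads to a contradiction with the absence of subvarieties of codimension $1$ and $2$. Throughout we use that the image $Y:=\alpha(X)$ is an irreducible analytic subvariety of the torus $A:=\operatorname{Alb}(X)$ that generates $A$. In particular $Y$ is positive dimensional as soon as $q\ge1$.

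\textbf{Fibres of dimension $1$ or $2$.} Suppose $1\le q\le 3$, and let $d:=\dim Y$, so $1\le d\le \min(q,4)$. If $d\in\{2,3\}$, a general fibre of $\alpha\colon X\to Y$ has dimension $4-d\in\{1,2\}$. For $d=2$ its components are surfaces in $X$, which hypothesis (codimension $2$) forbids. For $d=3$ we argue differently: the fibres are curves, and we take the preimage $\alpha^{-1}(Z)$ of a curve $Z\subset Y$. The torus $A$ always contains curves through points of $Y$, and intersecting $Y$ with a translate of a suitable subvariety produces such a $Z$; alternatively one may use a hypersurface section if $Y$ is not simple. This preimage is a surface, again a contradiction. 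If instead $Y$ contains no curves at all (a simple $Y$), we pull back a divisor class. Either way, a $d=3$ image with $q=3$ forces $Y=A$, since $Y$ generates $A$. The case $d=q=3$ then needs the extra input that a general fibre is a curve $F$ with $K_F=K_X|_F$ pseudo-effective, so $g(F)\ge1$, together with the existence of the singular/discriminant locus of $\alpha$. That locus is a divisor in $A$ unless $\alpha$ is smooth, and if it is a divisor its preimage is a divisor in $X$, which is impossible. So $\alpha$ is a smooth curve fibration over a $3$-torus. We then pull back any subtorus or curve of $A$ through Ueno-type arguments, or in the case where $A$ is simple, use the relative canonical bundle $K_{X/A}=K_X$ and the Hodge bundle $\alpha_*K_{X/A}$, which is nef. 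Its determinant being nontrivial produces an effective divisor, and its triviality forces isotriviality. In the isotrivial case a finite étale cover splits as $F\times A'$ and contains divisors $\{p\}\times A'$, whose images are divisors in $X$.

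\textbf{Image of dimension $1$ or $4$.} If $d=1$, then $Y$ is a curve generating $A$, and the fibres of $\alpha$ are divisors in $X$, which is a contradiction. If $d=4$, then $q\ge4$ since $Y\subset A$. Combining the cases, $q\ge1$ forces $d=4\le q$, while $q\le3$ is excluded as above. It remains to rule out $q>4$. In that case $Y\subsetneq A$ is a generically finite image, and we use $h^0(\Omega^3_X)\ge \binom{q}{3}\ge 10\ge2$, since the wedge of pulled-back $1$-forms is nonzero because $\alpha$ is generically finite. Lemma~\ref{thm:main} then gives $K_X$ torsion, and hence, by Beauville--Bogomolov, a finite étale cover is a torus $T$ with $q(T)=4$. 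Since $q(X)\le q(T)$, we get $q\le4$, a contradiction.

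\textbf{Main obstacle.} The hardest step is the case $d=q=3$, namely a smooth genus $\ge1$ curve fibration over a $3$-torus with no divisors upstairs. I expect to handle it by showing that the discriminant is empty and then using the nefness and flatness of $\alpha_*\omega_{X/A}$ to get isotriviality, followed by a splitting after an étale cover. As a fallback, $h^0(\Omega^3_X)\ge \binom{3}{3}+h^0$ (mixed forms) may also reach $2$, letting us apply Lemma~\ref{thm:main} directly. The torsion conclusion then contradicts $q=3$, since Beauville--Bogomolov would give a torus or IHS cover, neither of which has a curve fibration onto a $3$-torus.
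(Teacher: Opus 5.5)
Your proposal has a genuine gap in the isotrivial genus-one subcase of $d=q=3$, and this is exactly where the difficulty lies. You assert that after a finite \'etale base change the fibration splits as $F\times A'$, which gives divisors $\{p\}\times A'$. This is right for $g(F)\geq 2$, where $\operatorname{Aut}(F)$ is finite. For elliptic fibres, however, the base change only kills the finite part $\operatorname{Aut}(E,0)$ of the monodromy and leaves a principal $E$-bundle, which need not be trivial. The paper's own Example~\ref{ex:elliptic-fibration-over-special-torus} refutes your intermediate claim. There $f_A\colon X_A\to B$ is a smooth isotrivial elliptic fibration over a $3$-torus with no positive-dimensional proper subvarieties. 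Yet $\operatorname{NS}(X_A)=0$, so $X_A$ has no divisors; if it became $E\times B'$ after an \'etale base change, the image of $\{p\}\times B'$ would be one. Hence the argument must use $q(X)=3$ (note that $q(X_A)=4$). The paper does this via Blanchard's theorem. The K\"ahler principal bundle $X'$ has $b_1(X')=b_1(A')+2=8$, so $q(X')=4$, and $X'$ is a torus by Lemma~\ref{lem:q-geq-four-torus}. Writing $X=T/G$, the condition $q(X)=3$ produces some $g\in G$ whose linear part $L_g\neq\mathrm{id}$ fixes a hyperplane of $1$-forms. Then $\ker(L_g-\mathrm{id})$ gives a $3$-dimensional subtorus whose $G$-orbit descends to a divisor on $X$. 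Your fallback does not close this either. It is unproven, since nothing guarantees $h^0(X,\Omega^3_X)\geq 2$. It is also insufficient: a Beauville--Bogomolov cover would be a $4$-torus (an IHS is excluded by $q\geq 1$), and $4$-tori do admit elliptic fibrations onto $3$-tori, so the quotient structure still has to be exploited as above.

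A second gap is in your $d=3$ discussion. It relies on ``the torus $A$ always contains curves through points of $Y$'', which is false: a general torus, e.g.\ $B$ above, has no curves. Also, ``pulling back a divisor class'' does not produce an effective divisor. So the case $d=3<q$ is never excluded. It matters for $q\geq 5$: your $3$-form count there assumes $\alpha$ is generically finite, but for $d=3$ all pulled-back $3$-forms are pointwise proportional and give only one section. The missing input is the one the paper uses. No divisors implies $a(X)=0$, so the Albanese map is surjective by Ueno's theorem, and $d=q\leq 4$ immediately. This also makes your detour through Lemma~\ref{thm:main} and Beauville--Bogomolov for $q>4$ unnecessary. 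Two smaller points. Smoothness of $\alpha$ follows most cleanly from the fact that the degeneracy locus of $d\alpha\colon T_X\to\alpha^*T_A$ has codimension at most $2$ in $X$ when nonempty, rather than from a purity claim for the discriminant. And ``a nontrivial nef determinant of the Hodge bundle yields an effective divisor'' is false as stated, since nontrivial elements of $\operatorname{Pic}^0(A)$ are nef with no sections. Isotriviality follows directly by lifting the period map to $\mathbb C^3\to\mathcal T_g$, a bounded domain, and applying Liouville.
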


The conclusion of Lemma \ref{thm:0or4} already hints towards torus or irreducible holomorphic symplectic behavior. We restrict ourselves to nef case and prove the following Lemma, using all the lemmas above.
\begin{lemma}\label{thm:mainnef}
    Let \(X\) be a compact K\"ahler fourfold with \(K_X\) nef. Assume that 
    \begin{itemize}
        \item \(X\) has no subvarieties of codimension one;
        \item \(X\) has no subvarieties of codimension two.
    \end{itemize}
    Then \(K_X\) is torsion.
\end{lemma}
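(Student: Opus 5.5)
We prove Lemma~\ref{thm:mainnef} by splitting along the value of the irregularity given by Lemma~\ref{thm:0or4}: either \(q(X)=4\) or \(q(X)=0\). In both cases we aim to produce enough holomorphic \(3\)-forms to invoke Lemma~\ref{thm:main}, or else to show \(K_X\) is numerically trivial directly and then torsion. Nefness of \(K_X\) matters because it gives the numerical intersection theory of \(c_1(K_X)\) against the Kähler class. In particular, the numerical dimension \(\nu(K_X)\) is well defined, and there is a Kähler-version Miyaoka/Bogomolov-type pseudo-effectivity of \(c_2\).

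\textbf{Case \(q(X)=4\).} Consider the Albanese map \(\alpha\colon X\to \mathrm{Alb}(X)\), whose target is a \(4\)-dimensional torus. Its image is an irreducible analytic subvariety. If \(\alpha\) is not surjective, the image has positive dimension at most \(3\), and pulling back subvarieties of the image yields divisors or surfaces in \(X\), which is excluded. If the image is a proper subtorus-generating subvariety of dimension \(<4\), then the fibres of \(\alpha\) have positive dimension, and for a surface or threefold image the fibres are curves or surfaces. Preimages of curves in the image then give surfaces or divisors, which is again a contradiction. The only degenerate possibility is a curve image, whose fibres are divisors. So \(\alpha\) is generically finite and surjective, and the ramification divisor \(R\) with \(K_X=\alpha^*K_{\mathrm{Alb}}+R=R\) is effective. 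Since \(X\) has no divisors, \(R=0\), so \(K_X\simeq\cO_X\). This case does not even use nefness. Alternatively, \(h^0(\Omega^3_X)\ge\binom43=4\ge2\) via wedges of \(1\)-forms, and Lemma~\ref{thm:main} applies directly.

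\textbf{Case \(q(X)=0\).} Here we need \(h^0(\Omega_X^3)\ge 2\), or a direct argument. Serre duality gives \(h^0(\Omega^3_X)=h^4(\Omega^1_X)=h^{1,4}=h^{4,1}=h^{1}(K_X)\)-type relations; more usefully, Hodge symmetry gives \(h^{3,0}=h^{0,3}\). We would compute \(\chi(\cO_X)=1-0+h^{0,2}-h^{0,3}+h^{0,4}\) and compare it with Riemann–Roch \(\chi(\cO_X)=\frac1{720}(-c_4+\dots)\) expressed through \(c_1=-K_X\) nef. The plan is first to show \(\nu(K_X)=0\). Suppose \(\nu(K_X)\ge1\). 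By nef abundance-type results for nef classes of positive numerical dimension on simple-like manifolds, we try to show that \(K_X\) is not merely pseudo-effective but has a singular metric whose nonnef/Lelong loci are analytic. Those loci would then be divisors or surfaces, which is impossible, so the metric has zero Lelong numbers. Hard Lefschetz with multiplier ideals, applied to \(K_X\) with \(\cI(h)=\cO_X\), then gives surjections \(H^0(\Omega^{4-q}_X\otimes K_X)\to H^q(K_X\otimes K_X)\). Combining these with \(q=0\) should produce nonzero sections of \(\Omega^j_X\otimes K_X^{m}\), and via the reflexive extension-class arguments these sections give subsheaves whose degeneracy loci are forced to be empty. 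The outcome is a holomorphic splitting that in turn forces \(K_X\equiv0\). Once \(c_1(K_X)=0\) in \(H^2(X,\mathbb R)\), Beauville–Bogomolov and \(q=0\) make \(\pi_1\) finite, and the fact that \(\mathrm{Pic}^0\) is trivial makes \(K_X\) torsion.

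\textbf{Main obstacle.} The hardest step is excluding \(0<\nu(K_X)\) in the case \(q=0\). Absent an abundance theorem in the Kähler fourfold setting, the first attempt will be the following. Use \(\nu(K_X)\in\{1,2,3,4\}\) and the absence of divisors and surfaces to show that the null locus of \(K_X\) (Collins–Tosatti) is empty when \(K_X\) is big, which forces \(K_X\) Kähler and hence \(X\) projective. A projective \(X\) has divisors, a contradiction. For intermediate \(\nu\), we will try to show \(h^0(\Omega^3_X)\ge2\) via the Hard Lefschetz isomorphism \(\omega^{\nu}\wedge\cdot\) on \(K_X\)-twisted forms and the vanishing of \(H^q(X,K_X\otimes K_X)\) for \(q>4-\nu\), so that Lemma~\ref{thm:main} can be invoked.
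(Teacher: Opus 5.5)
\textbf{There is a genuine gap: the case \(q(X)=0\) is not proved, and that case is the whole content of the lemma.} Your case \(q(X)=4\) is fine in substance; it is Lemma~\ref{lem:q-geq-four-torus}. Surjectivity of the Albanese map is cleaner from \(a(X)=0\) than from your fibre count, which tacitly assumes a threefold image contains curves or surfaces to pull back. For \(q(X)=0\), your plan fails at three points.

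(i) Siu's theorem makes the Lelong upper-level sets of a positive current in \(c_1(K_X)\) analytic. But the hypotheses allow curves and points, so you cannot conclude that these sets are empty, that the Lelong numbers vanish, or that \(\mathcal I(h)=\mathcal O_X\).

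(ii) For \(\nu(K_X)=4\), the null locus of a nef and big class is the union of the subvarieties \(V\) with \(c_1(K_X)^{\dim V}\cdot V=0\). This can be a nonempty union of curves with \(K_X\cdot C=0\), so Collins--Tosatti does not make \(K_X\) K\"ahler. The case is still easy by another route: holomorphic Morse inequalities give a section of some \(K_X^m\), contradicting Lemma~\ref{lem:line-sections}.

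(iii) For \(\nu(K_X)\in\{1,2,3\}\), a vanishing \(H^q(X,K_X^{2})=0\) for \(q>4-\nu\), even if granted, yields vanishing rather than nonvanishing. It gives no way to produce two independent holomorphic \(3\)-forms. The sentences about degeneracy loci being forced empty and a splitting forcing \(K_X\equiv 0\) are not an argument. Excluding intermediate numerical dimension is essentially an abundance statement, and nothing in your sketch supplies it.

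\textbf{The missing idea is to split according to the sign of \(\chi(X,\mathcal O_X)\) rather than \(\nu(K_X)\).}

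If \(q=0\) and \(K_X\) is not torsion, Lemma~\ref{lem:line-sections} gives \(h^{0,4}=0\), and non-projectivity gives \(h^{0,2}\ge 1\). Hence \(\chi(X,\mathcal O_X)=1+h^{0,2}-h^{0,3}\le 0\) forces \(h^{3,0}=h^{0,3}\ge 2\), and Lemma~\ref{thm:main} applies.

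If \(\chi(X,\mathcal O_X)>0\), nefness gives \(c_1(K_X)^4\ge 0\). The K\"ahler Miyaoka--Yau inequality gives \(c_1(K_X)^2\cdot c_2(X)\ge \frac{2}{5}c_1(K_X)^4\ge 0\). So Hirzebruch--Riemann--Roch yields \(\chi(X,K_X^{s})\ge \chi(X,\mathcal O_X)>0\) for all \(s\ge 1\). Since \(H^0\) and \(H^4\) of \(K_X^{m+1}\) vanish, this forces \(H^2(X,K_X^{m+1})\neq 0\) for every \(m\ge 1\).

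The multiplier ideals \(\mathcal I(h^m)\) have cosupport of dimension at most \(1\), so this class survives the twist. Hard Lefschetz with multiplier ideals then gives \(H^0(X,\Omega_X^2\otimes K_X^m)\neq 0\) for infinitely many \(m\). By \cite[Proposition~2.6]{AH22} and the absence of divisors, this contradicts \(c_1(K_X)_{\mathbb R}\neq 0\), which you correctly derive from \(\operatorname{Pic}^0(X)=0\). This is Proposition~\ref{prop:propleq1} together with Lemma~\ref{thm:nef-positive-euler-torsion}.

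You mention a Miyaoka--Yau-type inequality at the outset but never feed it into Riemann--Roch. That is exactly where nefness enters the proof.
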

The last step is the following lemma. 
\begin{lemma}\label{lem:pseff-canonical-nef-no-divisors-surfaces}
Let \(X\) be a compact K\"ahler fourfold. Assume that \(X\) contains no
irreducible analytic subvarieties of codimension \(1\) or \(2\). If \(K_X\) is
pseudo-effective, then \(K_X\) is nef.
\end{lemma}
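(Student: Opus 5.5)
We will use the divisorial Zariski decomposition of Boucksom for the pseudo-effective class $c_1(K_X)$, together with the description of the non-nef locus. Write
\[
c_1(K_X)=Z+N,
\]
where $N=\sum a_j D_j$ is the negative part: an effective $\mathbb R$-divisor supported on finitely many prime divisors $D_j$. Since $X$ contains no divisors, $N=0$, so $c_1(K_X)=Z$ is modified nef, i.e.\ nef in codimension one.

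To upgrade this to nefness we control the non-nef locus. By Boucksom's theory, $c_1(K_X)$ is nef iff its minimal multiplicity $\nu(\alpha,x)$ vanishes at every point $x$. The non-nef locus $E_{nn}(\alpha)=\{x:\nu(\alpha,x)>0\}$ is a countable union of analytic subvarieties, namely the union over $c>0$ of the sets $E_c(T_{\min})$. These are the Lelong sublevel sets of currents with minimal singularities, which are analytic by Siu's theorem. So suppose $K_X$ is not nef. Then for some $c>0$ the analytic set $E_c(T_{\min})$, or a union of such sets for a sequence of currents $T_\varepsilon\in\alpha+\varepsilon\omega$ with analytic singularities, contains an irreducible component $V$ of positive dimension. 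The hypothesis on $X$ forces $\dim V\le 1$, so $V$ is a curve, or else the locus is a finite set of points.

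For the points: a class whose non-nef locus is a finite set is nef. Indeed, we can glue $T_\varepsilon$ with a local potential $\max(\varphi,\ A\log|z-x|^2 - C)$ to get currents in $\alpha+\varepsilon\omega$ with zero Lelong number at $x$. This is Demailly's regularisation, and it works because isolated points impose no cohomological obstruction: $\nu(\alpha,x)=0$ whenever the non-nef locus has codimension $\ge 2$ near $x$ and is zero-dimensional. So the real case is a curve $C\subset E_{nn}(K_X)$.

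The main obstacle is excluding a curve in the non-nef locus; the argument above does not handle it. The approach we would try first is to show that every irreducible component of $E_{nn}$ is not $K_X$-negative in the relevant sense, using the K\"ahler MMP philosophy: curves in the non-nef locus of $K_X$ should be covered by rational curves or be contractible. By Boucksom's result, if $C\subset E_{nn}(\alpha)$ then $\alpha|_C$ fails to be pseudo-effective on $C$, or at least $\alpha\cdot C<0$ for some such curve. Then $K_X\cdot C<0$, and by Mori-type bend-and-break (available for K\"ahler manifolds through the Höring--Peternell results) $C$ deforms in a family of dimension at least $-K_X\cdot C+\dim X-3\ge 2$. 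The surface swept out by this family is either all of $X$ or a divisor or surface. A divisor or surface is excluded by hypothesis. If it is all of $X$, then $X$ is uniruled, which contradicts $K_X$ pseudo-effective by Brunella/Boucksom--Demailly--P\u{a}un--Peternell. If instead $K_X\cdot C\ge 0$ for all curves in $E_{nn}$, then we would show directly that $\nu(\alpha,x)=0$ along $C$ via the inequality $\int_C \alpha \ge \nu(\alpha,C)\cdot(\text{local self-intersection})$ from Boucksom's Proposition 3.5. This step, the deformation estimate in the non-projective K\"ahler setting, is where we expect the difficulty to concentrate.
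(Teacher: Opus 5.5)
\textbf{There is a genuine gap, at the step you flagged yourself.} Your argument never produces a $K_X$-negative \emph{rational} curve, and without one nothing forces the bad curve to move.

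\emph{Getting a negative curve.} The claim ``by Boucksom's result, if $C\subset E_{nn}(\alpha)$ then $\alpha\cdot C<0$'' cannot be quoted in that form. What is available is P\u{a}un's nefness criterion, applied to a K\"ahler current with analytic singularities in $c_1(K_X)+\varepsilon[\omega]$. Under our hypotheses its singular locus has dimension $\le 1$. This does give some curve $C$ with $K_X\cdot C<0$, and it makes your ``$K_X\cdot C\ge 0$'' branch vacuous.

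\emph{Making it move.} Such a $C$ need not be rational. Your bound $-K_X\cdot C+\dim X-3$ is the count for rational curves only. For a curve of genus $g$ in a fourfold, the lower bound on the family of image curves is $-K_X\cdot C+(1-g)(\dim X-3)=-K_X\cdot C+1-g$, which can be $\le 0$. So $C$ may be rigid, and then there is nothing to sweep. Bend-and-break would convert $C$ into a rational curve, but it is a characteristic-$p$ argument that needs projectivity, and here $a(X)=0$. The H\"oring--Peternell results you invoke concern K\"ahler threefolds.

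\emph{What the paper uses instead.} The paper fills exactly this hole with Cao--H\"oring (Corollary~1.4): in dimension four, if $K_X$ is pseudo-effective but not nef, there is a rational curve $C$ with $K_X\cdot C<0$. With that input, the divisorial Zariski decomposition and the whole non-nef-locus analysis become unnecessary. That includes your unproved gluing claim that a finite non-nef locus forces nefness.

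\textbf{With that rational curve, your endgame is a valid alternative.} The paper uses Horikawa's estimate to get a family of image cycles $\Gamma_t$ of dimension $\ge 2$. It then takes a K\"ahler current $T$ with analytic singularities in $\alpha=c_1(K_X)+\varepsilon[\omega]$, where $\alpha\cdot C<0$. Every $\Gamma_t$ has $\alpha\cdot\Gamma_t<0$, so it must lie in $\operatorname{Sing}(T)$. That set has dimension $\le 1$, so it cannot contain a non-constant family of curves.

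Your version sweeps out a subvariety instead. This works if you say why the swept locus is a closed analytic set. The reason is that the relevant component of the cycle space is compact: all $\Gamma_t$ have the same homology class, hence bounded volume on a compact K\"ahler manifold. The swept locus then has dimension $\ge 2$, so by hypothesis it is all of $X$. Then $K_X\cdot C<0$ on a covering family, which contradicts pseudo-effectivity of $K_X$.

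The trade-off: the paper's route needs only a local germ of deformations and avoids the compactness of cycle spaces. Yours avoids regularization of currents.
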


\subsection{Proof method}
We first recall the relevant established results and then explain what remains to be proved in order to establish the conjecture in this dimension.
\begin{enumerate}
    \item \textbf{Pseudo-effectivity of $K_X$.}
    If $K_X$ is not pseudo-effective, then $X$ is uniruled by \cite{OuUniruledKahler}, and hence contains a rational curve. Therefore, under the assumptions of Conjecture~\ref{conj:no-subvarieties}, it remains only to treat the case where $K_X$ is pseudo-effective. Since \cite{OuUniruledKahler} is presently available in preprint form, we regard this as a conditional reduction; the theorem proved below is unconditional under the hypothesis that $K_X$ is pseudo-effective.
    
    \item \textbf{Vanishing of the holomorphic Euler characteristic.}
    If $X$ contains no proper analytic subvarieties, then
    \[
        \chi(X,\mathcal O_X)=0 \text{ or } K_X \text{ is torsion}.
    \]
    This follows from Lemma \ref{lem:no-subvarieties-chi-or-torsion}. Also look at \cite[Theorem~2.7.3]{DPS01} together with \cite[Proposition~2.6]{AH22}.

    \item \textbf{The case where $K_X$ is torsion.}
    If $K_X$ is torsion, then the Beauville--Bogomolov decomposition theorem implies that, after passing to a finite \'etale cover, $X$ decomposes as a product of a complex torus and simply connected Calabi--Yau or irreducible holomorphic symplectic factors. After excluding the Calabi--Yau and nontrivial product cases, one is therefore reduced to the case of a torus quotient or an irreducible holomorphic symplectic manifold. See, \cite[Theorem 2.8]{Vikash2026ClassificationOP}.
\end{enumerate}
Thus, in view of the established results recalled above, the remaining point is to prove that $K_X$ is torsion. We prove this by establishing a more general sufficient condition for the torsion of $K_X$, which in particular implies the conjecture in the present dimension. We first prove the Lemma \ref{thm:main}.\newline
\textbf{Proof idea for Lemma \ref{thm:main}:}
The first observation is the following implication. Let $\mathcal E$ be a
torsion-free sheaf on $X$ and let $L$ be a line bundle. Then
\begin{equation}\label{eq:twisted-sections-divisor-torsion}
    H^0\bigl(X,\mathcal E\otimes L^m\bigr)\neq 0
    \text{ for infinitely many } m
    \quad \Longrightarrow \quad
    \begin{cases}
        X \text{ contains an effective divisor},\\
        \text{or } L \text{ is torsion.}
    \end{cases}
\end{equation}
This is precisely \cite[Proposition~2.6]{AH22}. Thus, in order to prove
that $K_X$ is torsion, it is enough to produce nonzero sections
\[
    H^0\bigl(X,\Omega_X^1\otimes K_X^m\bigr)\neq 0
\]
for infinitely many integers $m$. The construction of these sections is obtained through Hard Lefschetz for pseudo-effective line bundles. If $L$ is a pseudo-effective line bundle on a
compact K\"ahler manifold $X$ of dimension $n$, endowed with a singular
hermitian metric $h$ with semipositive curvature current, then wedge product
with a K\"ahler form induces a surjective map
\[
    H^0\bigl(X,\Omega_X^{n-q}\otimes L\otimes \mathcal I(h)\bigr)
    \longrightarrow
    H^q\bigl(X,K_X\otimes L\otimes \mathcal I(h)\bigr).
\]
In dimension $4$, applying this with $q=3$ and $L=K_X^m$ gives a surjection
\[
    H^0\bigl(X,\Omega_X^1\otimes K_X^m\otimes \mathcal I(h^{\otimes m})\bigr)
    \longrightarrow
    H^3\bigl(X,K_X^{m+1}\otimes \mathcal I(h^{\otimes m})\bigr).
\]
Since
\[
    H^0\bigl(X,\Omega_X^1\otimes K_X^m\otimes \mathcal I(h^{\otimes m})\bigr)
    \subset
    H^0\bigl(X,\Omega_X^1\otimes K_X^m\bigr),
\]
it is enough to construct nonzero classes in the cohomology group on the
right for infinitely many values of $m$. The difficulty is that these cohomology groups involve multiplier ideals. This is where the assumption that $X$ contains no irreducible analytic
subvarieties of codimension two enters. It allows us to control the
multiplier ideals and to identify the relevant cohomology groups with
extension groups. More precisely, one obtains the chain of identifications
\begin{equation}\label{eq:hl-ext-identification}
H^3\bigl(X,K_X^{m+1}\otimes \mathcal I(h^{\otimes m})\bigr)
\xrightarrow[\cong]{(1)}
H^1\bigl(X,K_X^{-m}\bigr)
\xrightarrow[\cong]{(2)}
\Ext^1_X(I_Z,K_X^{-m}).
\end{equation}
Here $Z$ is a subscheme of dimension at most $1$. The absence of
codimension-two subvarieties is used only at this point: it is needed
precisely to obtain the isomorphism labeled $(2)$ in
\eqref{eq:hl-ext-identification}; see Lemma~\ref{lem:finite-support-cohomology}
and Lemma~\ref{lem:ext}. Thus the main part of the proof is devoted to the recursive construction of classes in
\[
    \Ext^1_X(I_Z,K_X^{-m}).
\]
We also use the fact that, under the assumption that $X$ contains no
codimension-one subvarieties, there exists a nonzero holomorphic two-form;
see \cite[Chapter~7, Exercise~1]{Voisin_2002} and
\cite{KodairaEmbedding}. This two-form gives a
rank-two reflexive subsheaf $\cE\subset \Omega_X^1$. Using the two
holomorphic three-forms on $X$, we then construct sections
\[
    \beta_1,\beta_2 \in H^0(X,\cE\otimes K_X).
\]
We prove that the subsheaf of $\mathcal E\otimes K_X$ generated by
$\beta_1$ and $\beta_2$ has rank one. After twisting by $K_X^{-1}$, this
gives an exact sequence
\[
    0\to K_X^{-1}\to \mathcal E\to I_Z\otimes K_X\to 0.
\]
By Proposition~\ref{prop:foliation}, this extension is non-split unless
$K_X$ is torsion. The external inputs used at this step are Demailly's
Frobenius integrability theorem, Theorem~\ref{thm:demailly-frobenius}, and
a theorem of Pereira--Rousseau--Touzet \cite{PRT}. Finally, Hard Lefschetz is applied again to continue the recursion. This
produces nonzero classes, and hence nonzero sections of
$\Omega_X^1\otimes K_X^m$, for infinitely many values of $m$. The implication
\eqref{eq:twisted-sections-divisor-torsion} then forces $K_X$ to be torsion. \newline
\textbf{Proof idea for Lemma~\ref{thm:mainnef}.}
Let \(X\) be a compact K\"ahler fourfold with nef canonical bundle, and assume
that \(X\) contains no subvarieties of codimension \(1\) or \(2\). The proof is
divided into the following steps.
\begin{description}
    \item[Step 1: Irregularity dichotomy.]
    We first prove that
    \[
        q(X)=0
        \quad\text{or}\quad
        q(X)=4.
    \]
    This follows from standard arguments involving the Albanese map, together
    with the absence of divisors and surfaces.

    \item[Step 2: The case \(\chi(X,\mathcal O_X)\leq 0\).]
    If \(q(X)=4\), then \(X\) is a complex torus, and hence \(K_X\) is trivial.
    Thus we may assume \(q(X)=0\). If \(K_X\) is not torsion, then
    \[
        h^0(X,K_X)=0.
    \]
    Since \(X\) has no divisors, it is not projective, and hence
    \[
        h^0(X,\Omega_X^2)>0.
    \]
    Therefore, when \(\chi(X,\mathcal O_X)\leq 0\), the identity
    \[
        \chi(X,\mathcal O_X)
        =
        1+h^0(X,\Omega_X^2)-h^0(X,\Omega_X^3)
    \]
    implies
    \[
        h^0(X,\Omega_X^3)\geq 2.
    \]
    Lemma~\ref{thm:main} then applies and gives that \(K_X\) is torsion.

    \item[Step 3: The positive Euler characteristic case.]
    It remains to treat the case
    \[
        \chi(X,\mathcal O_X)>0.
    \]
    In this case, we use Proposition~\ref{prop:propleq1} with \(L=K_X\).
    This proposition shows that if
    \[
        \chi(X,K_X^m)\geq 0
        \qquad
        \text{for all } m\geq 1,
    \]
    then either \(X\) contains a codimension-one subvariety or \(K_X\) is
    torsion. Since \(X\) has no codimension-one subvarieties, this forces
    \(K_X\) to be torsion.

    \item[Step 4: Use of nefness.]
    The nefness of \(K_X\) is used precisely to verify the numerical condition
    \[
        \chi(X,K_X^m)\geq 0
        \qquad
        \text{for all } m\geq 1.
    \]
    By Hirzebruch--Riemann--Roch, for a compact K\"ahler fourfold,
    \[
        \chi(X,K_X^m)
        =
        \chi(X,\mathcal O_X)
        +
        \frac{m(m-1)}{24}c_1(K_X)^2\cdot c_2(X)
        +
        \frac{m^2(m-1)^2}{24}c_1(K_X)^4.
    \]
    Since \(K_X\) is nef, we have
    \[
        c_1(K_X)^4\geq 0.
    \]
    Moreover, by the Miyaoka--Yau inequality for compact K\"ahler manifolds
    with nef canonical bundle, proved by Wanxing Liu
    \cite[Theorem~1.1]{LiuMY}, we have
    \[
        c_1(K_X)^2\cdot c_2(X)\geq 0.
    \]
    Hence
    \[
        \chi(X,K_X^m)\geq \chi(X,\mathcal O_X)>0
        \qquad
        \text{for all } m\geq 1.
    \]
    This verifies the hypothesis of Proposition~\ref{prop:propleq1} and
    completes the proof.
\end{description}
\textbf{Proof idea for Lemma \ref{thm:0or4}.} The dichotomy \(q(X)=0\) or \(q(X)=4\) follows from standard arguments with the Albanese map: intermediate-dimensional Albanese images or singular fibres would produce divisors or surfaces, while the case \(q(X)\geq 4\) forces the Albanese map to be finite \'etale, hence \(X\) is a complex torus.\newline
\textbf{Proof idea for Lemma \ref{lem:pseff-canonical-nef-no-divisors-surfaces}}. The proof uses three technical inputs. First, Cao--H\"oring produces a rational curve \(C\subset X\) with \(K_X\cdot C<0\) if \(K_X\) is pseudo-effective but not nef (\cite{CaoHoering2020}). Second, Horikawa's deformation estimate shows that such a rational curve moves in a positive-dimensional family (\cite{Horikawa1973}). Third, after adding a small K\"ahler class to \(c_1(K_X)\), Demailly's regularization theorem gives a K\"ahler current with analytic singularities. The negativity forces all nearby deformations of \(C\) to lie in the singular locus of this current. Since \(X\) has no divisors or surfaces, this singular locus has dimension at most \(1\), which cannot contain a nonconstant family of curves. This contradiction proves that \(K_X\) is nef.

\begin{note}
We emphasize that the argument is not a dimension-four accident. The proof combines several ingredients which are flexible in higher dimensions: extension classes, Hard Lefschetz with multiplier ideals for pseudo-effective line bundles, and the analysis of rank-one subsheaves of differential forms via foliation-theoretic positivity. In dimension four, the required initial extension is produced by the linear algebra of a holomorphic two-form together with two holomorphic three-forms. The absence of codimension-one and codimension-two subvarieties is then used to control the ideal-sheaf contributions which arise from reflexive sheaves and multiplier ideals. The same circle of ideas also applies in dimension five, but the corresponding linear algebra and rank analysis are substantially longer. For this reason, the five-dimensional case will be treated separately in a sequel. Thus the present paper gives the first complete instance of a robust pseudo-effectivity and extension-class method, while keeping the exposition focused on the four-dimensional case.
\end{note}
\subsection{Notation and conventions}

All complex manifolds are assumed to be connected and smooth, also fourfolds are smooth. A compact K\"ahler manifold
means a compact connected complex manifold admitting a K\"ahler form. All
dimensions and codimensions are complex dimensions and codimensions. If \(X\) is a smooth complex manifold, we denote by \(T_X\) its holomorphic tangent bundle, by \(\Omega_X^p\) the sheaf of holomorphic \(p\)-forms, and by
\[
K_X:=\det \Omega_X^1
\]
its canonical bundle. In the proof of the main theorem we often write
\[
K:=K_X.
\]
For an integer \(m\), we write \(K^m\) for \(K_X^{\otimes m}\), with the
convention \(K^{-m}:=(K_X^\vee)^{\otimes m}\) for \(m>0\). A line bundle \(L\)
is called torsion if \(L^r\simeq \mathcal O_X\) for some integer \(r>0\). We write
\[
q(X):=h^1(X,\mathcal O_X)=h^0(X,\Omega_X^1)
\]
for the irregularity of a compact K\"ahler manifold, and
\(\chi(X,\mathcal O_X)\) for its holomorphic Euler characteristic. An analytic subset means a closed complex analytic subset. By an analytic
subvariety we mean an irreducible reduced closed analytic subset. A subvariety
is called proper if it is not equal to the ambient space. Thus saying that
\(X\) has no codimension \(k\) subvarieties means that \(X\) contains no proper
irreducible analytic subvariety of codimension \(k\). In particular, on a
fourfold, a codimension-one subvariety is an effective divisor and a codimension-two
subvariety is a surface. For a coherent sheaf \(\mathscr F\), we write
\[
\mathscr F^\vee:=\mathcal Hom_{\mathcal O_X}(\mathscr F,\mathcal O_X).
\]
If \(\mathscr F\) is torsion-free of rank \(r\), its determinant is always
understood in the reflexive sense:
\[
\det \mathscr F:=\left(\bigwedge^r \mathscr F\right)^{\vee\vee}.
\]
A subsheaf \(\mathscr G\subset \mathscr F\) is called saturated if
\(\mathscr F/\mathscr G\) is torsion-free. For a closed subscheme \(Z\subset X\), we denote its ideal sheaf by \(I_Z\) and its support by \(|Z|\). When we write \(\operatorname{codim}_X |Z|\geq c\), we mean that every irreducible component of the support of \(Z\) has codimension at
least \(c\) in \(X\). We write \(\operatorname{rk}\mathscr F\), \(\operatorname{Supp}\mathscr F\), \(\operatorname{im}\phi\), \(\operatorname{coker}\phi\), and
\(\operatorname{Ann}(\mathscr F)\) for the rank, support, image, cokernel, and
annihilator, respectively. Global Ext groups are denoted by
\(\operatorname{Ext}^i_X(-,-)\), while sheaf Ext groups are denoted by
\(\mathcal Ext^i_X(-,-)\). We denote the curvature current of a Hermitian metric $h$ by $\Theta_h$. We denote by
\[
\operatorname{Alb}(X)
\]
the Albanese torus of \(X\), and by
\[
\alpha_X:X\longrightarrow \operatorname{Alb}(X)
\]
the Albanese map. The algebraic dimension of \(X\) is denoted by \(a(X)\). When \(T\) is a complex torus, we write
\[
T=V/\Lambda
\]
where \(V\) is a complex vector space and \(\Lambda\subset V\) is a lattice. If a finite group \(G\) acts on \(T\), we write \(T/G\) for the analytic quotient. For an element \(g\in G\), its induced linear part on \(V\) is denoted by \(L_g\).

\section{Preliminaries}
\subsection{Reflexive sheaves} 
Most of the results in this section can be found in \cite{OSS,Hartshorne1980}. Let $X$ be a complex manifold and let $\mathscr F$ be a coherent sheaf on $X$. We write
\[
\mathscr F^\vee:=\mathcal Hom_{\mathcal O_X}(\mathscr F,\mathcal O_X).
\]
There is a natural evaluation morphism
\[
\mathscr F\longrightarrow \mathscr F^{\vee\vee}.
\]
The sheaf $\mathscr F$ is called reflexive if this morphism is an isomorphism.
If $\mathscr F$ is torsion-free of rank $r$, its determinant is understood in
the reflexive sense:
\[
\det\mathscr F:=\left(\bigwedge^r\mathscr F\right)^{\vee\vee}.
\]
When $\mathscr F$ is locally free, this agrees with the usual determinant. We shall use the following standard facts. On a smooth complex manifold,
rank-one reflexive coherent sheaves are locally free, hence line bundles
\cite[Ch.~II, Lemma~1.1.15]{OSS}. Reflexive sheaves, their morphisms, and their
sections are determined by restriction to the complement of an analytic subset
of codimension at least two. More precisely, if \(A\subset X\) is an analytic
subset of codimension at least two and
\(j:X\setminus A\hookrightarrow X\) is the inclusion, then every reflexive
coherent sheaf \(\mathscr F\) on \(X\) satisfies
\[
\mathscr F\simeq j_*(\mathscr F|_{X\setminus A}),
\]
and hence
\[
H^0(X,\mathscr F)=H^0(X\setminus A,\mathscr F|_{X\setminus A});
\]
See \cite[Proposition 1.6]{Hartshorne1980}. A coherent subsheaf \(\mathscr G\subset \mathscr F\) is called saturated in
\(\mathscr F\) if the quotient \(\mathscr F/\mathscr G\) is torsion-free.
Equivalently, \(\mathscr G\) is maximal among subsheaves of \(\mathscr F\)
which agree with \(\mathscr G\) at the generic point. Thus, if
\[
0\to \mathscr K\to \mathscr E\to \mathscr Q
\]
is exact, with $\mathscr E$ reflexive and $\mathscr Q$ torsion-free, then
$\mathscr K$ is a saturated subsheaf of $\mathscr E$, hence reflexive
\cite[Ch.~II, Lemma~1.1.16]{OSS}. Finally, if $I_Z\subset \mathcal O_X$ is a
coherent ideal sheaf whose cosupport has codimension at least two, then
\[
I_Z^{\vee\vee}\simeq\mathcal O_X
\]
and
\[
\det(I_Z\otimes L)\simeq L
\]
for every line bundle $L$.

\begin{lemma}\label{lem:no-meromorphic}
Let $X$ be a compact complex manifold with no codimension-one subvarieties. Then every meromorphic function on $X$ is constant.
\end{lemma}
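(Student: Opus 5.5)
The plan is to take a nonconstant meromorphic function \(f\) on \(X\) and produce from it a codimension-one subvariety, which contradicts the hypothesis. Since \(X\) is a complex manifold, the local rings \(\mathcal O_{X,x}\) are unique factorization domains. So near every point \(x\) we can write \(f=g_x/h_x\) with \(g_x,h_x\in\mathcal O_{X,x}\) coprime. Coprimality means this representation is unique up to units, so the germs of the zero sets \(\{h_x=0\}\) glue to a well-defined closed analytic subset \(P\subset X\), the polar divisor of \(f\). Likewise the sets \(\{g_x=0\}\) glue to the zero divisor \(N\). Each of \(P\) and \(N\) is either empty or of pure codimension one, because locally it is the zero set of a single nonunit holomorphic function.

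By hypothesis \(X\) has no irreducible codimension-one subvarieties. Any nonempty pure codimension-one analytic subset would have an irreducible component of codimension one, so \(P=\emptyset\). This means every \(h_x\) is a unit, so \(f\) is in fact a holomorphic function on all of \(X\).

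Now \(X\) is compact and connected, which is part of the standing conventions; in any case the argument can be run on each connected component. By the maximum principle, a holomorphic function on such a space is constant. This contradicts the assumption on \(f\), and the lemma follows.

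The only step that needs care is the well-definedness of the polar set, namely that the local coprime representations agree on overlaps up to units. This is exactly the UFD property of \(\mathcal O_{X,x}\) together with the coherence of the sheaf of denominators. I would cite the standard fact that the pole set of a meromorphic function on a complex manifold is either empty or a hypersurface, rather than reprove it. An alternative route is to apply the same argument to the level divisors \(\{f=c\}\), which are hypersurfaces for a nonconstant \(f\); that version needs no maximum principle, but it still relies on the same divisor theory.
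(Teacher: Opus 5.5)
Your argument is correct, but it takes a different route from the paper's.

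The paper resolves the indeterminacy of \(f\) by a modification \(\mu:\widetilde X\to X\) with \(\widetilde f:\widetilde X\to\mathbb P^1\) holomorphic. It then takes a general fibre \(\widetilde f^{-1}(p)\) and pushes forward a component of it that is not contained in the exceptional locus. So the paper extracts the forbidden divisor from a \emph{level set} of \(f\).

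You extract it from the \emph{polar set} instead. The locus where \(f\) fails to be holomorphic is an analytic set that is either empty or of pure codimension one. So the hypothesis forces \(f\) to be holomorphic, and compactness plus connectedness finish the proof. The cleanest justification is the Riemann extension theorem: a meromorphic function that is holomorphic off an analytic set of codimension \(\geq 2\) extends holomorphically. This lets you avoid the gluing discussion via coprime representations, which would otherwise need the fact that germs coprime at \(x\) stay coprime at nearby points.

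What your approach buys is elementarity. It needs no resolution of indeterminacies, which is a Hironaka-type result in the analytic category. It uses only local properties of \(\mathcal O_{X,x}\) and the maximum principle, and it applies verbatim on compact normal complex spaces. The paper's approach produces the divisor directly as the image of a fibre, at the cost of heavier machinery. (One could even avoid the resolution by working with the graph of \(f\) in \(X\times\mathbb P^1\).)

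On your level-set variant: \(\{f=c\}\) may be empty for a particular \(c\). You should either take \(c\) in the image of \(f\) when \(f\) is holomorphic, or use the nonempty polar set otherwise.
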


\begin{proof}
Suppose that \(f\) is nonconstant and resolve its indeterminacies:
\[
\mu:\widetilde X\to X,\qquad \widetilde f:\widetilde X\to \mathbb P^1 .
\]
For general \(p\in\mathbb P^1\), the fiber
\(D=\widetilde f^{-1}(p)\) is a divisor on \(\widetilde X\). Since \(f\) is
nonconstant on its domain of definition, \(D\) is not contained in the
\(\mu\)-exceptional locus. Therefore some irreducible component of \(D\) maps
onto a codimension-one analytic subset of \(X\), contradicting the hypothesis.
Hence \(f\) is constant.
\end{proof}

\begin{lemma}\label{lem:line-sections}
Let $X$ be a compact complex manifold with no codimension-one subvarieties.
If $L$ is a line bundle on $X$ and
\[
0\neq s\in H^0(X,L),
\]
then $s$ is nowhere vanishing. Hence $L\simeq \mathcal O_X$. In particular, if
\[
H^0(X,K_X^r)\neq 0
\]
for some $r\in \mathbb Z\setminus\{0\}$, then $K_X$ is torsion.
\end{lemma}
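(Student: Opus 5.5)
The plan is to look at the zero locus of the section. Let \(0\neq s\in H^0(X,L)\) and put \(V:=\{s=0\}\). Locally \(L\) is trivial and \(s\) is given by a single holomorphic function \(f\), not identically zero on the connected manifold \(X\). So \(V\) is either empty or, by Krull's principal ideal theorem (equivalently, the local structure of zero sets of one holomorphic function), of pure codimension one. In the second case any irreducible component of \(V\) is a proper irreducible analytic subvariety of codimension one, which the hypothesis forbids. Hence \(V=\emptyset\). Then \(s\) is a nowhere vanishing section, and multiplication by \(s\) gives an isomorphism \(\mathcal O_X\to L\), \(1\mapsto s\). This proves the first two assertions.

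For the last assertion, take \(r\neq 0\) with \(H^0(X,K_X^r)\neq 0\). Applying the first part to \(L=K_X^r\) gives \(K_X^r\simeq\mathcal O_X\). If \(r>0\), this says \(K_X\) is torsion by definition. If \(r<0\), then \(K_X^{|r|}\simeq (K_X^{r})^\vee\simeq \mathcal O_X\), which again says \(K_X\) is torsion.

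The only point that needs care is the pure-codimension-one claim. A nonzero holomorphic function on a connected open set cannot vanish on a nonempty open subset, by the identity theorem. Its zero set is a hypersurface: near each of its points the local ring is \(\mathcal O_{X,x}/(f)\) with \(f\) a nonunit, which has dimension \(n-1\). So every component of \(V\) has dimension exactly \(n-1\). Compactness guarantees that \(V\) has finitely many components, so each one is an honest irreducible subvariety of codimension one. I do not expect any real obstacle here.
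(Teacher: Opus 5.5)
Your proposal is correct and follows the same route as the paper. In both, the zero locus of a nonzero section is either empty or of pure codimension one, the latter is excluded by hypothesis, so $s$ trivializes $L$, and applying this to $L=K_X^r$ gives torsion; you additionally spell out the hypersurface property and the case $r<0$ via duality, which the paper leaves implicit.
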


\begin{proof}
The zero locus of a nonzero section of a line bundle is either empty or contains
a codimension-one analytic subvariety. Since $X$ has no codimension-one
subvarieties, the zero locus of $s$ is empty. Thus $s$ trivializes $L$.
Applying this to $L=K_X^r$ gives the final assertion.
\end{proof}

\subsection{Multiplier ideals and hard Lefschetz}

Most of the results in this section can be found in \cite{DemaillyAnalyticMethods,DPS01}. Let \(L\) be a holomorphic line bundle on a complex manifold \(X\). A singular
Hermitian metric \(h\) on \(L\) is given locally by
\[
|e|_h^2=e^{-2\varphi}
\]
with respect to a local holomorphic frame \(e\) of \(L\), where \(\varphi\) is
an \(L^1_{\mathrm{loc}}\) function. Its curvature current is locally
\[
\Theta_h(L)=2\partial\bar\partial \varphi .
\]
We say that \((L,h)\) has semipositive curvature if the local weights
\(\varphi\) are plurisubharmonic, equivalently if \(\Theta_h(L)\) is a closed
positive \((1,1)\)-current. The multiplier ideal sheaf associated to \(h\) is the coherent ideal sheaf
\[
\mathcal I(h)\subset \mathcal O_X
\]
defined locally by
\[
\mathcal I(h)(U)
=
\left\{
f\in \mathcal O_X(U)\;:\;
|f|^2 e^{-2\varphi}\text{ is locally integrable on }U
\right\}.
\]
The coherence of \(\mathcal I(h)\) is Nadel's coherence theorem; see
\cite[Theorem~5.7]{DemaillyAnalyticMethods}. A line bundle \(L\) on a compact
complex manifold is called pseudo-effective if it admits a singular Hermitian
metric with semipositive curvature current. A line bundle \(L\) on a compact K\"ahler manifold \(X\) is called nef if
\[
c_1(L)\in \overline{\mathcal K}_X,
\]
where \(\mathcal K_X\subset H^{1,1}(X,\mathbb R)\) denotes the K\"ahler cone.
Equivalently, for every \(\varepsilon>0\), there exists a smooth Hermitian
metric \(h_\varepsilon\) on \(L\) such that
\[
\sqrt{-1}\Theta_{h_\varepsilon}(L)\geq -\varepsilon\omega
\]
for every K\"ahler form \(\omega\) on \(X\). In particular, every nef line bundle on a compact K\"ahler manifold is pseudo-effective. We shall use the following hard Lefschetz theorem with multiplier ideals due to
Demailly--Peternell--Schneider.
\begin{theorem}[Demailly--Peternell--Schneider\cite{DPS01}]\label{thm:DPS-HL}
Let \(X\) be a compact K\"ahler manifold of dimension \(n\), let \(L\) be a
pseudo-effective line bundle, and let \(h\) be a singular Hermitian metric on
\(L\) with semipositive curvature current. Then, for every \(q\geq 0\),
cup-product with a K\"ahler class gives a surjective map
\[
H^0\bigl(X,\Omega_X^{n-q}\otimes L\otimes \mathcal I(h)\bigr)
\twoheadrightarrow
H^q\bigl(X,K_X\otimes L\otimes \mathcal I(h)\bigr).
\]
\end{theorem}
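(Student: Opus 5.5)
The approach is the standard $L^2$ route, reducing to the smooth case by Demailly's equisingular regularization and passing to a limit. First treat the model case where $h$ is smooth with $\sqrt{-1}\Theta_h(L)\geq 0$, so $\mathcal I(h)=\mathcal O_X$. A class in $H^q(X,K_X\otimes L)$ is represented by an $L$-valued $(n,q)$-form $u$ that is harmonic for the Kähler metric $\omega$ and the metric $h$. The pointwise Lefschetz isomorphism $\omega^q\wedge\cdot:\Lambda^{n-q,0}\to\Lambda^{n,q}$ gives $u=\omega^q\wedge v$ for a unique $L$-valued $(n-q,0)$-form $v$. The Bochner--Kodaira--Nakano identity applied to $u$, together with $\Theta_h\geq 0$, should force $D'v=0$ and $\bar\partial v=0$. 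Hence $v\in H^0(X,\Omega_X^{n-q}\otimes L)$ and it maps to the class of $u$, which gives surjectivity.

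For a singular $h$, choose by Demailly's equisingular approximation a family of metrics $h_\varepsilon$ on $L$ with the following properties:
\begin{itemize}
\item $h_\varepsilon$ is smooth on $X\setminus Z_\varepsilon$, where $Z_\varepsilon$ is an analytic subset;
\item $h_\varepsilon$ increases to $h$;
\item $\sqrt{-1}\Theta_{h_\varepsilon}(L)\geq -\varepsilon\omega$;
\item $\mathcal I(h_\varepsilon)=\mathcal I(h)$ for all $\varepsilon$.
\end{itemize}
Represent a given class $\{u\}\in H^q(X,K_X\otimes L\otimes\mathcal I(h))$ by a smooth $\bar\partial$-closed form $u$ with $\int|u|^2_{h}\,dV<\infty$; the $L^2$ (Dolbeault) resolution of $K_X\otimes L\otimes\mathcal I(h)$ allows this. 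On the complete Kähler manifold $X\setminus Z_\varepsilon$, carry out the $L^2$ Hodge decomposition with respect to $h_\varepsilon$ and $\omega$:
\[
u=u_\varepsilon+\bar\partial w_\varepsilon .
\]
Here $u_\varepsilon$ is $\Delta''_{h_\varepsilon}$-harmonic, and $\|u_\varepsilon\|_{h_\varepsilon}\leq\|u\|_{h_\varepsilon}\leq\|u\|_h$. Write $u_\varepsilon=\omega^q\wedge v_\varepsilon$. The Bochner formula now contains a curvature error term of size $O(\varepsilon)$. It should therefore give
\[
\|\bar\partial v_\varepsilon\|^2+\|D'v_\varepsilon\|^2\leq C\varepsilon\|u\|_h^2 ,
\]
while the $L^2$ norms of $v_\varepsilon$ stay uniformly bounded.

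Next, pass to the limit as $\varepsilon\to 0$. Because the $h_\varepsilon$ increase to $h$, the norms $\|v_\varepsilon\|_{h_{\varepsilon_0}}$ are bounded for each fixed $\varepsilon_0$. A diagonal weak-limit argument then produces $v$ with $\bar\partial v=0$ and $\int|v|^2_{h}\,dV<\infty$; the latter uses monotone convergence. So $v$ is a holomorphic section of $\Omega_X^{n-q}\otimes L\otimes\mathcal I(h)$, after extending across $Z_\varepsilon$ by the $L^2$ Riemann extension theorem. Similarly $\bar\partial w_\varepsilon$ converges weakly. The forms $w_\varepsilon$ are controlled by the $L^2$ estimate with twisted curvature $\geq-\varepsilon\omega$ plus a small Kähler correction. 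This yields $u-\omega^q\wedge v=\bar\partial w$ with $w$ square-integrable for $h$. So $\omega^q\wedge v$ represents $\{u\}$ in $H^q(X,K_X\otimes L\otimes\mathcal I(h))$.

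The main obstacle is this last limiting step. The required bounds are: the error $\bar\partial w_\varepsilon$ in the fixed singular norm $h$, as opposed to $h_\varepsilon$, and uniform solvability of $\bar\partial$ with $L^2_h$ control even though the curvature is only $\geq-\varepsilon\omega$. I would first try Demailly's trick of adding $\varepsilon\omega$ to the curvature, which costs a factor $e^{\varepsilon\psi}$ for a bounded potential $\psi$. I would combine it with the equality $\mathcal I(h_\varepsilon)=\mathcal I(h)$, which ensures the $L^2$ cohomology computed with $h_\varepsilon$ and with $h$ agree. Together these should give a limit argument that works.
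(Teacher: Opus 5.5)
\emph{Comparison with the paper.} The paper does not prove this statement; it quotes it from \cite{DPS01}. Your outline follows that strategy: equisingular approximation, harmonic projection on $X\setminus Z_\varepsilon$, Bochner--Kodaira--Nakano giving $\|\bar\partial v_\varepsilon\|^2\le C\varepsilon\|u\|^2$, and weak limits with monotone convergence to get a holomorphic $v$ with $\int|v|_h^2<\infty$. The smooth model case and the construction of $v$ are fine.

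\emph{The gap.} The gap is exactly the step you flag, and the mechanism you propose for it fails.

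First, on a complete Kähler manifold the $L^2$ Hodge decomposition only gives $u-u_\varepsilon\in\overline{\operatorname{Im}\bar\partial}$. So without further input there need not be any $L^2$ primitive $w_\varepsilon$.

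Second, the route to uniform bounds on $w_\varepsilon$ is not available. There is no global $\psi$ on the compact manifold $X$ with $\sqrt{-1}\partial\bar\partial\psi\geq\omega$, since such a $\psi$ would be psh and hence constant. So the twist by $e^{\varepsilon\psi}$ does not exist. Moreover, even with curvature $\geq\varepsilon\omega$, the H\"ormander constant $(q\varepsilon)^{-1}$ blows up as $\varepsilon\to0$.

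A smaller point: $\omega$ is not complete on $X\setminus Z_\varepsilon$. You must work with $\omega+\delta\hat\omega$, where $\hat\omega$ is complete, and take a further limit $\delta\to0$. This uses that for $(n,\cdot)$-forms the density $|f|^2\,dV$ decreases when the K\"ahler metric increases.

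\emph{The missing idea.} You never need to estimate primitives; exactness passes to weak limits for functional-analytic reasons.

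Every $L^2$ Dolbeault complex of $(n,\bullet)$-forms that occurs is a fine resolution of the same coherent sheaf $K_X\otimes L\otimes\mathcal I(h_\varepsilon)=K_X\otimes L\otimes\mathcal I(h)$. This covers weight $h$ or $h_\varepsilon$, and metric $\omega$ on $X$ or $\omega+\delta\hat\omega$ on $X\setminus Z_\varepsilon$. This is the real role of equisingularity, rather than any uniform estimate.

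Hence each of these complexes computes the finite-dimensional group $H^q(X,K_X\otimes L\otimes\mathcal I(h))$. By the open mapping theorem, the range of $\bar\partial$ is therefore closed, hence weakly closed, in each of these Hilbert spaces. In particular $u-u_\varepsilon=\bar\partial w_\varepsilon$ holds genuinely.

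$L^2$ norms of $(n,\cdot)$-forms decrease when the metric increases or the weight $h_\varepsilon$ decreases. So a form that is exact at a later stage of the approximation (smaller $\delta$, then smaller $\varepsilon$) is exact in the space attached to a fixed earlier stage. Take $\delta\to0$ first and then $\varepsilon\to0$. Then the weak limit $u-\omega^q\wedge v$ is exact in a fixed such space.

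Finally, the inclusion of the $(\omega,h)$-complex into that space is a morphism of resolutions of the same sheaf, hence induces an isomorphism on cohomology. The same comparison is used after the $\delta$-limit to return to $\omega$. Therefore $u-\omega^q\wedge v$ is exact in the $L^2(h)$ complex. This closed-range and comparison argument is what must replace your final paragraph.
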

They also prove the following result \cite[Theorem 2.7.3]{DPS01}.
\begin{theorem}\label{thm:use}
    If $X$ is a compact K\"ahler manifold of dimension $n$, with pseudo-effective canonical bundle. assume there is a singular Hermitian metric with trivial multiplier ideal. Then at least one of the following holds.
    \begin{itemize}
        \item $K_X$ is torsion;
        \item The Euler characteristic $\chi(X,\mathcal{O}_X)=0$;
        \item Algebraic dimension of $X$, $a(X)> 0$.
    \end{itemize}
\end{theorem}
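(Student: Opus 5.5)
The plan is to combine Theorem~\ref{thm:DPS-HL} with a vanishing statement for twisted forms, and then read off \(\chi(X,\mathcal O_X)\) from a polynomial identity. I will read the hypothesis as in \cite{DPS01}: \(K_X\) carries a singular metric \(h\) with semipositive curvature such that \(\mathcal I(h^{\otimes m})=\mathcal O_X\) for all \(m\geq 1\). If only \(\mathcal I(h)=\mathcal O_X\) is meant, the argument below gives the conclusion only for \(m=1\), and the last step has to be adapted. I assume that \(K_X\) is not torsion and that \(a(X)=0\), and I aim to prove \(\chi(X,\mathcal O_X)=0\).

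\textbf{Step 1 (twisting by flat bundles).} For \(P\in\operatorname{Pic}^0(X)\), give \(P\) its flat metric. Then \(K_X^m\otimes P\) carries the metric \(h^{\otimes m}\otimes h_P\), which has semipositive curvature and trivial multiplier ideal. Theorem~\ref{thm:DPS-HL} with \(L=K_X^m\otimes P\) gives, for every \(q\), a surjection
\[
H^0\bigl(X,\Omega_X^{n-q}\otimes K_X^m\otimes P\bigr)\twoheadrightarrow H^q\bigl(X,K_X^{m+1}\otimes P\bigr).
\]
Since \(\chi(X,K_X^{m+1}\otimes P)=\chi(X,K_X^{m+1})\) is independent of \(P\), it suffices to find, for each \(m\geq 1\), one \(P\) for which all the groups on the left vanish.

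\textbf{Step 2 (vanishing of twisted forms).} This is the step I expect to be the main obstacle. Fix \(m\geq1\) and \(p\). First, if \(h^0(X,\Omega_X^{p}\otimes K_X^m\otimes P)\neq 0\) for all \(P\) in a dense (or uncountable) subset of \(\operatorname{Pic}^0(X)\), then upper semicontinuity gives a nonzero section for every \(P\). I would then try to show that either two independent sections, or sections for different \(P\), combine into a nonconstant meromorphic function. For example, wedging with, or taking ratios of, coefficients of sections of the same bundle gives such a function, contradicting \(a(X)=0\). I would also try to show that otherwise the zero loci of these sections sweep out the situation of \eqref{eq:twisted-sections-divisor-torsion}, so that \(K_X^m\otimes P\) is torsion for infinitely many twists, which forces \(K_X\) to be torsion.

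The delicate point is that the sections are forms with values in a line bundle, not sections of a line bundle. The ratio argument therefore has to be carried out on \(\det\) of the subsheaf of \(\Omega^p_X\otimes K_X^m\otimes P\) that they generate. I would first try to reduce to this rank-one saturated subsheaf, use the reflexive-sheaf facts recalled above to make it a line bundle, and then apply \cite[Proposition~2.6]{AH22}. The outcome should be that for each \(m\) there is a \(P\) with \(H^0(X,\Omega_X^{p}\otimes K_X^m\otimes P)=0\) for all \(p\).

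\textbf{Step 3 (conclusion).} Step 2 and the surjection of Step 1 give \(H^q(X,K_X^{m+1}\otimes P)=0\) for all \(q\). Hence
\[
\chi(X,K_X^{m+1})=0\qquad\text{for all } m\geq1.
\]
By Hirzebruch--Riemann--Roch, \(t\mapsto\chi(X,K_X^t)\) is a polynomial in \(t\). It vanishes for all \(t\geq2\), so it vanishes identically, and evaluating at \(t=0\) gives \(\chi(X,\mathcal O_X)=0\). This yields the trichotomy: \(K_X\) is torsion, or \(a(X)>0\), or \(\chi(X,\mathcal O_X)=0\).
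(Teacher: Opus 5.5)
\textbf{Genuine gap: Step 2 asks for the wrong quantifier.} Your Steps 1 and 3 are fine, but Step 2 cannot be carried out as stated. You need, for every fixed $m\geq 1$, some $P\in\operatorname{Pic}^0(X)$ with $H^0(X,\Omega_X^p\otimes K_X^m\otimes P)=0$ for all $p$. None of the mechanisms you list says anything about a single level $m$.
\begin{itemize}
\item The ratio trick only works for two sections of the \emph{same} bundle that are generically proportional. For $P_1\neq P_2$, the ratio of coefficients is a meromorphic section of $P_1\otimes P_2^{-1}$, not a meromorphic function, so $a(X)=0$ is not contradicted.
\item For a single bundle, $a(X)=0$ only tells you that independent sections generate a subsheaf of rank at least two. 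It does not forbid one nonzero section.
\item The dichotomy \eqref{eq:twisted-sections-divisor-torsion} and \cite[Proposition~2.6]{AH22} concern $\mathcal E\otimes L^m$ for infinitely many powers of one fixed line bundle $L$. The family $K_X^m\otimes P$, with $m$ fixed and $P$ varying, is not of this form. So ``torsion for infinitely many twists'' is not something these results give you.
\item If $q(X)=0$, then $\operatorname{Pic}^0(X)$ is trivial. Step 2 then asserts $H^0(X,\Omega_X^p\otimes K_X^m)=0$ for all $p$ and all $m\geq 1$. Via Theorem~\ref{thm:DPS-HL} this would force $H^q(X,K_X^{m+1})=0$ for all $q$ and $m$. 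That is far stronger than $\chi(X,K_X^{m+1})=0$, which can come from cancellation, and the hypotheses do not control it level by level.
\end{itemize}

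\textbf{The repair.} Reverse the logic. This is what the paper does by quoting \cite[Theorem~2.7.3]{DPS01} together with \cite[Proposition~2.6]{AH22}; the same argument is written out in Lemma~\ref{lem:no-subvarieties-chi-or-torsion}.
\begin{enumerate}
\item Assume $\chi(X,\mathcal O_X)\neq 0$. Then $t\mapsto\chi(X,K_X^t)$ is a nonzero polynomial, hence nonzero for infinitely many $t=m+1$.
\item For each such $m$, some $H^q(X,K_X^{m+1})\neq 0$. After passing to a subsequence, $q$ is fixed.
\item Theorem~\ref{thm:DPS-HL}, with $\mathcal I(h^{\otimes m})=\mathcal O_X$, turns this into $H^0(X,\Omega_X^{n-q}\otimes K_X^m)\neq 0$ for infinitely many $m$.
\item Only now, with infinitely many powers of the fixed bundle $K_X$, does \cite[Proposition~2.6]{AH22} apply to reach the remaining alternatives of the statement.
\end{enumerate}
No twisting by $\operatorname{Pic}^0(X)$ is needed. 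Your Step 3 also only ever needed the Euler characteristic to vanish at infinitely many $t$, not at all $t\geq 2$.
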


\begin{proof}
    Follows from \cite[Theorem 2.7.3]{DPS01} and \cite[Proposition~2.6]{AH22}.
\end{proof}
Using the arguments in the proof of the above theorem, we have the following statement.
\begin{lemma}\label{lem:no-subvarieties-chi-or-torsion}
Let \(X\) be a compact Kähler manifold of dimension \(n\). Assume that
\(K_X\) is pseudo-effective and that \(X\) contains no positive-dimensional
analytic subvarieties. Then
\[
\chi(X,\mathcal O_X)=0
\quad\text{or}\quad
K_X \text{ is torsion}.
\]
\end{lemma}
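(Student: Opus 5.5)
Assume $K_X$ is not torsion; we show $\chi(X,\mathcal O_X)=0$. The plan is to reproduce the proof of Theorem~\ref{thm:use}, i.e.\ \cite[Theorem~2.7.3]{DPS01}, and to replace the hypothesis ``there is a metric with trivial multiplier ideal'' and the alternative ``$a(X)>0$'' by the absence of subvarieties.

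\textbf{Step 1: control the multiplier ideals.} Choose a singular metric $h$ on $K_X$ with semipositive curvature; for instance, take one with minimal singularities. For every $m\geq 1$ the ideal $\mathcal I(h^{\otimes m})$ is coherent. Its cosupport is a closed analytic subset of $X$, and by hypothesis it contains no positive-dimensional subvarieties, so it is a finite set of points. Hence $\mathcal O_X/\mathcal I(h^{\otimes m})$ is supported in dimension $0$. It has no higher cohomology, and its $H^0$ is a finite-dimensional skyscraper $\tau_m$.

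\textbf{Step 2: apply Hard Lefschetz.} Apply Theorem~\ref{thm:DPS-HL} with $L=K_X^m$ and every $q$. This gives surjections
\[
H^0\bigl(X,\Omega_X^{n-q}\otimes K_X^m\otimes\mathcal I(h^{\otimes m})\bigr)\twoheadrightarrow H^q\bigl(X,K_X^{m+1}\otimes\mathcal I(h^{\otimes m})\bigr).
\]
Since $K_X$ is not torsion and $X$ has no divisors, \cite[Proposition~2.6]{AH22} (implication \eqref{eq:twisted-sections-divisor-torsion}, applied to the torsion-free sheaf $\Omega_X^{n-q}$) shows that the left-hand side vanishes for all but finitely many $m$. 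In particular it vanishes for a suitable $m_0$, and we can take one with $m_0\equiv 0$ modulo any given integer if needed. Therefore $H^q(X,K_X^{m_0+1}\otimes\mathcal I(h^{\otimes m_0}))=0$ for all $q$, and so
\[
\chi\bigl(X,K_X^{m_0+1}\otimes\mathcal I(h^{\otimes m_0})\bigr)=0.
\]
We could instead apply the same argument to the full sequence of $m$.

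\textbf{Step 3: pass to $\chi(X,\mathcal O_X)$.} Use the exact sequence
\[
0\to K_X^{m+1}\otimes\mathcal I(h^{\otimes m})\to K_X^{m+1}\to K_X^{m+1}\otimes\mathcal O_X/\mathcal I(h^{\otimes m})\to 0.
\]
It gives $\chi(X,K_X^{m+1})=\ell_m$, where $\ell_m\geq 0$ is the length of the zero-dimensional quotient. The function $m\mapsto\chi(X,K_X^{m})$ is a polynomial $P(m)$ by Riemann--Roch.

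Here I would follow DPS and use that $c_1(K_X)$ is numerically trivial on such an $X$. Since there are no subvarieties, the positive part in the divisorial Zariski decomposition has no divisorial component, and the numerical dimension argument of DPS shows that $K_X$ is numerically trivial, i.e.\ $K_X\in\operatorname{Pic}^0$ up to torsion. Then $P(m)=\chi(X,\mathcal O_X)$ for all $m$. This reduces the claim to showing $\ell_m=0$ for one of the $m$ from Step 2.

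When $K_X$ is numerically trivial, the metric can be taken smooth and flat, so $\mathcal I(h^{\otimes m})=\mathcal O_X$ and $\ell_m=0$. Hence $\chi(X,\mathcal O_X)=\chi(X,K_X^{m_0+1})=0$.

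\textbf{Main obstacle.} The hard step is establishing the numerical triviality of $c_1(K_X)$ from pseudo-effectivity together with the absence of subvarieties. I would first try the following. Take the Siu decomposition of a positive current $T\in c_1(K_X)$. Its Lelong level sets are analytic, hence finite, so $T$ has zero Lelong numbers outside finitely many points. Then use the DPS argument (Boucksom's divisorial Zariski decomposition with no divisorial part, so $K_X$ is nef in codimension one and its nef part is all of $c_1(K_X)$) to obtain a metric with trivial multiplier ideal. At that point Theorem~\ref{thm:use} applies directly, and its third alternative $a(X)>0$ is excluded by Lemma~\ref{lem:no-meromorphic}.
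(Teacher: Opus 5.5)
\textbf{There is a genuine gap in Step 3.} To pass from $\chi(X,K_X^{m_0+1})=\ell_{m_0}$ at a single $m_0$ to $\chi(X,\mathcal O_X)$, you assert that $c_1(K_X)$ is numerically trivial. Nothing in your argument proves this, and it does not follow from \cite{DPS01}. The absence of a divisorial part in Boucksom's decomposition only says that $K_X$ is nef in codimension one. It gives no reason for the positive part to vanish. Numerical triviality of $K_X$ is not known in this generality: it is essentially the numerical content of Conjecture~\ref{conj:no-subvarieties}. So it cannot serve as an intermediate step for a lemma whose conclusion is only the dichotomy ``$\chi(X,\mathcal O_X)=0$ or $K_X$ torsion''. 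Your fallback also fails as described. Finiteness of the Lelong upper level sets still allows positive Lelong numbers at finitely many points, and these make $\mathcal I(h^{\otimes m})$ nontrivial there for large $m$. So the Siu decomposition alone does not produce a metric with trivial multiplier ideal.

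The detour is unnecessary: your Step 2 already finishes the proof if you use all large $m$ instead of one $m_0$. For $m\gg0$ you have $H^q(X,K_X^{m+1}\otimes\mathcal I(h^{\otimes m}))=0$ for every $q$. You also have $H^0(X,K_X^{m+1})=0$ by Lemma~\ref{lem:line-sections}, since $K_X$ is not torsion. In the long exact sequence of your Step 3, the connecting map
\[
H^0\bigl(X,K_X^{m+1}\otimes\mathcal O_X/\mathcal I(h^{\otimes m})\bigr)\longrightarrow H^1\bigl(X,K_X^{m+1}\otimes\mathcal I(h^{\otimes m})\bigr)=0
\]
is therefore injective. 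Hence $\ell_m=0$ and $\chi(X,K_X^{m+1})=0$. So $P(m)=0$ for all $m\gg0$, which forces $P\equiv0$ and $\chi(X,\mathcal O_X)=P(0)=0$.

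With this correction, your argument is the paper's proof read contrapositively. The paper assumes $P(0)\neq0$ and gets $H^q(X,K_X^m)\neq0$ for infinitely many $m$. It disposes of $q=0$ by Lemma~\ref{lem:line-sections}. For $q>0$ it lifts classes to the multiplier-ideal twist, using that $\mathcal O_X/\mathcal I(h^{\otimes(m-1)})$ has finite support, and concludes with Theorem~\ref{thm:DPS-HL} and \cite{AH22}.
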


\begin{proof}
Assume that \(\chi(X,\mathcal O_X)\neq 0\). We prove that \(K_X\) is torsion. Since \(K_X\) is pseudo-effective, choose a singular Hermitian metric \(h\)
on \(K_X\) with semipositive curvature current. For each \(r\geq 0\), set
\[
\mathcal I_r:=\mathcal I(h^r).
\]
The multiplier ideal \(\mathcal I_r\) is coherent. Hence its cosupport
\[
Z_r:=V(\mathcal I_r)
\]
is an analytic subset of \(X\). Since \(X\) contains no positive-dimensional
analytic subvarieties, \(Z_r\) is either empty or finite. Therefore
\[
Q_r:=\mathcal O_X/\mathcal I_r
\]
is a finite-length sheaf. In particular, for every line bundle \(L\) on \(X\),
\[
H^i(X,Q_r\otimes L)=0
\qquad\text{for all } i>0.
\]
Now fix \(m\geq 1\). Tensor the exact sequence
\[
0\longrightarrow \mathcal I_{m-1}
\longrightarrow \mathcal O_X
\longrightarrow Q_{m-1}
\longrightarrow 0
\]
by \(mK_X\). We obtain
\[
0\longrightarrow mK_X\otimes \mathcal I_{m-1}
\longrightarrow mK_X
\longrightarrow mK_X\otimes Q_{m-1}
\longrightarrow 0.
\]
Since \(Q_{m-1}\) has finite support, its higher cohomology vanishes.
Therefore, for every \(q>0\), the natural map
\[
H^q\bigl(X,mK_X\otimes \mathcal I_{m-1}\bigr)
\longrightarrow
H^q(X,mK_X)
\]
is surjective. Consider the Euler characteristic
\[
P(m):=\chi(X,mK_X).
\]
By Hirzebruch--Riemann--Roch, \(P(m)\) is a polynomial in \(m\), and
\[
P(0)=\chi(X,\mathcal O_X)\neq 0.
\]
Thus \(P(m)\) is not identically zero. Hence
\[
\chi(X,mK_X)\neq 0
\]
for infinitely many positive integers \(m\). For each such \(m\), there exists some \(q\in\{0,\dots,n\}\) such that
\[
H^q(X,mK_X)\neq 0.
\]
If this happens for \(q=0\) for some \(m>0\), then
\[
H^0(X,mK_X)\neq 0.
\]
A nonzero section of \(mK_X\) defines an effective divisor unless it is
nowhere vanishing. Since \(X\) has no divisors, the section is nowhere
vanishing. Hence
\[
mK_X\simeq \mathcal O_X,
\]
so \(K_X\) is torsion. Thus we may assume that, for infinitely many \(m\), the nonzero cohomology
occurs in some degree \(q>0\). Passing to an infinite subsequence, we may fix
\(q>0\) such that
\[
H^q(X,mK_X)\neq 0
\]
for infinitely many \(m\). By the surjectivity above, we also have
\[
H^q\bigl(X,mK_X\otimes \mathcal I_{m-1}\bigr)\neq 0
\]
for infinitely many \(m\). Now apply the Hard Lefschetz theorem with
multiplier ideals to the pseudo-effective line bundle
\[
L=(m-1)K_X
\]
equipped with the metric \(h^{m-1}\). It gives a surjection
\[
H^0\bigl(X,\Omega_X^{n-q}\otimes (m-1)K_X\otimes \mathcal I_{m-1}\bigr)
\twoheadrightarrow
H^q\bigl(X,mK_X\otimes \mathcal I_{m-1}\bigr).
\]
Therefore
\[
H^0\bigl(X,\Omega_X^{n-q}\otimes (m-1)K_X\otimes \mathcal I_{m-1}\bigr)
\neq 0
\]
for infinitely many \(m\). Since
\[
\mathcal I_{m-1}\subset \mathcal O_X,
\]
we get
\[
H^0\bigl(X,\Omega_X^{n-q}\otimes (m-1)K_X\bigr)\neq 0
\]
for infinitely many \(m\). Finally, due to \cite[Proposition~2.6]{AH22} we have \(K_X\) is torsion, due to the assumption of no subvarieties. Equivalently,
\[
\chi(X,\mathcal O_X)=0
\quad\text{or}\quad
K_X \text{ is torsion}.
\]
\end{proof}
We can now prove Conjecture \ref{conj:no-subvarieties}, assuming Lemma \ref{thm:thmmain}.
\begin{proof}[Proof of Conjecture \ref{conj:no-subvarieties}:] 
    Since $X$ is non projective, $h^{2,0}>0$; due to \cite[Chapter 7, Exercise 1]{Voisin_2002},\cite{KodairaEmbedding}. Lemma \ref{lem:no-subvarieties-chi-or-torsion} implies,
    \[
    \chi(X,\mathcal O_X) = 0 \text{ or } K_X \text{ is torsion}.
    \]
    If $\chi(X,\mathcal O_X) = 0$, then $h^{3,0}\geq 2$. Indeed if $q(X)\neq 0$, then albanese map gives subvarieties a contradiction. Finally, Lemma \ref{thm:main} gives $K_X$ is torsion. Then \cite[Theorem 2.8]{Vikash2026ClassificationOP} gives the required conclusion.
\end{proof}
\subsection{Foliations}
The results in this section are proved in \cite{DemaillyFrobenius,PRT,TouzetConormal}.
Let \(X\) be a complex manifold. A possibly singular holomorphic foliation on
\(X\) is a saturated coherent subsheaf
\[
\mathcal F\subset T_X
\]
which is closed under the Lie bracket. Its rank is the general rank of
\(\mathcal F\), and its codimension is
\[
\operatorname{codim}\mathcal F=\dim X-\operatorname{rk}\mathcal F.
\]
The normal and conormal sheaves are
\[
N_{\mathcal F}:=(T_X/\mathcal F)^{**},
\qquad
N^*_{\mathcal F}:=N_{\mathcal F}^{\vee}.
\]

In this paper we only use codimension-one foliations. In this case
\(N^*_{\mathcal F}\) is a rank-one reflexive subsheaf of \(\Omega_X^1\), hence
a line bundle on a smooth complex manifold. Equivalently, a codimension-one
foliation is given by a saturated rank-one subsheaf
\[
N^*_{\mathcal F}\subset \Omega_X^1
\]
satisfying the Frobenius integrability condition. Locally, if
\(N^*_{\mathcal F}\) is generated by a holomorphic one-form \(\alpha\), this
condition is
\[
\alpha\wedge d\alpha=0.
\]
Moreover,
\[
\mathcal F=\ker\bigl(T_X\to (N^*_{\mathcal F})^\vee\bigr),
\qquad
N^*_{\mathcal F}=\operatorname{Ann}(\mathcal F)\subset \Omega_X^1.
\]
We shall use the following theorem of Demailly.

\begin{theorem}[\cite{DemaillyFrobenius}]\label{thm:demailly-frobenius}
Let \(X\) be a compact K\"ahler manifold, let \(L\) be a pseudo-effective line
bundle on \(X\), and let
\[
0\neq \theta\in H^0(X,\Omega_X^1\otimes L^{-1}).
\]
Then the coherent subsheaf
\[
S_\theta:=\{\xi\in T_X:\iota_\xi\theta=0\}
\]
is integrable. Equivalently, the saturated rank-one subsheaf of
\(\Omega_X^1\) generated by \(\theta\) defines a possibly singular
codimension-one holomorphic foliation.
\end{theorem}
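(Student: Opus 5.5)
The plan is to reduce the integrability of \(S_\theta\) to the vanishing of one holomorphic tensor, and to prove that vanishing by an integration-by-parts argument. The positivity of \(L\) will force a nonnegative integral to be \(\le 0\).

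\textbf{Step 1: the obstruction tensor.} On a trivializing open set \(U_\alpha\), write \(\theta=\theta_\alpha\otimes e_\alpha^{-1}\), where \(e_\alpha\) is a frame of \(L\) and \(\theta_\alpha\) is a holomorphic \(1\)-form. If \(e_\alpha=g_{\alpha\beta}e_\beta\), then \(\theta_\alpha=g_{\alpha\beta}\theta_\beta\). Hence \(d\theta_\alpha=g_{\alpha\beta}\,d\theta_\beta+dg_{\alpha\beta}\wedge\theta_\beta\), and therefore
\[
\theta_\alpha\wedge d\theta_\alpha=g_{\alpha\beta}^2\,\theta_\beta\wedge d\theta_\beta .
\]
So the local forms \(\theta_\alpha\wedge d\theta_\alpha\) glue to a global section
\[
\eta:=\theta\wedge d\theta\in H^0\bigl(X,\Omega_X^3\otimes L^{-2}\bigr).
\]
On the open set where \(\theta\neq0\), the classical Frobenius theorem says that \(S_\theta\) is integrable exactly when \(\eta=0\). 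Since \(S_\theta\) is saturated in \(T_X\), closure under the Lie bracket on that dense open set gives closure everywhere. The saturated rank-one subsheaf of \(\Omega_X^1\) generated by \(\theta\) is then a codimension-one foliation. It therefore suffices to show \(\eta=0\).

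\textbf{Step 2: the integral identity for smooth metrics.} First assume \(L\) has a smooth metric \(h=e^{-2\varphi}\) with curvature \(\Theta=2\partial\bar\partial\varphi\). Let \(\nabla=\nabla'+\bar\partial\) be the Chern connection on \(L^{-1}\), whose curvature is \(-\Theta\). Since \(\theta\) is holomorphic, \(\nabla\theta=\nabla'\theta\) is a \((2,0)\)-form, and \(\theta\wedge\nabla'\theta\) coincides with \(\eta\), because the connection-dependent term \(a\wedge\theta\) is killed by wedging with \(\theta\). Consider
\[
I:=\int_X c_n\,\eta\wedge\bar\eta\;e^{4\varphi}\wedge\omega^{n-3}\ \ge 0,
\]
with \(c_n\) the unimodular constant making the integrand a nonnegative volume form. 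Write one factor \(\eta=\nabla'(\theta\wedge\theta')\)-type and integrate by parts; more concretely, write \(\eta\wedge\bar\eta=\theta\wedge\nabla'\theta\wedge\overline{\theta\wedge\nabla'\theta}\) and move \(\nabla'\) onto the conjugate factor via Stokes (\(\omega\) is closed). Using \(\bar\partial\theta=0\), \(\theta\wedge\theta=0\) and \([\nabla',\bar\partial]=\) curvature, the boundary-free identity becomes
\[
I=-\int_X c_n'\,\Theta\wedge\theta\wedge\bar\theta\wedge\nabla'\theta\wedge\overline{\nabla'\theta}\;e^{4\varphi}\wedge\omega^{n-3}.
\]
The sign is dictated by the fact that \(L^{-1}\) is seminegatively curved. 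When \(\Theta\ge0\), the integrand on the right is a positive form, so \(I\le0\). Hence \(I=0\) and \(\eta=0\).

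\textbf{Step 3: passing to a singular metric.} In general \(h\) is singular, with \(\varphi\) quasi-psh. I would use Demailly's regularization to obtain smooth \(\varphi_\varepsilon\downarrow\varphi\) with \(2\partial\bar\partial\varphi_\varepsilon\ge-\varepsilon\omega\). The key point is that the weight on \(L^{-1}\) is \(e^{2\varphi_\varepsilon}\), and \(\varphi_\varepsilon\) is uniformly bounded above. So \(|\theta|\) and \(|\nabla'\theta|\) (the latter involves \(\partial\varphi_\varepsilon\), handled by rewriting \(\nabla'=e^{-2\varphi_\varepsilon}\partial e^{2\varphi_\varepsilon}\)) enter only through quantities with uniform bounds. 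Running Step 2 with \(\varphi_\varepsilon\) gives
\[
I_\varepsilon\le C\varepsilon\int_X|\theta|^2|\nabla'\theta|^2e^{4\varphi_\varepsilon}\,\omega^n .
\]
The \(\partial\varphi_\varepsilon\) contributions to \(\nabla'\theta\) cancel in the wedge with \(\theta\) exactly as in Step 1, so \(I_\varepsilon=\int|\eta|^2e^{4\varphi_\varepsilon}\,\omega^n\) depends only on the holomorphic tensor \(\eta\). Monotone convergence then gives \(\int|\eta|^2e^{4\varphi}\,\omega^n=0\). Since \(e^{4\varphi}>0\) almost everywhere, this forces \(\eta\equiv0\).

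\textbf{Main obstacle.} I expect the hardest part to be this last limiting step, namely the uniform control of the error term. One must ensure that the terms of \(\nabla'\theta\) involving \(\partial\varphi_\varepsilon\) either cancel or are bounded independently of \(\varepsilon\). I would first try to arrange the integration by parts so that these terms appear only wedged against \(\theta\), where they vanish. If needed, I would instead use an \(L^2\) estimate (Hörmander type, with weight \(e^{4\varphi}\)) in place of the pointwise bounds.
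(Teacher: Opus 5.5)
Step 1 of your proposal is fine. The argument fails at Step 2, and Step 3 uses an unavailable regularization. The paper gives no proof of this statement (it quotes it from Demailly), so your argument has to stand on its own. In Step 1, $\eta=\theta\wedge d\theta$ is a well-defined section of $\Omega_X^3\otimes L^{-2}$, it equals $\theta\wedge\nabla'\theta$, and its vanishing gives integrability of the saturated sheaf $S_\theta$.

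\textbf{The identity in Step 2 is false.} $\Theta$, $\theta\wedge\bar\theta$ and $\nabla'\theta\wedge\overline{\nabla'\theta}$ have bidegrees $(1,1)$, $(1,1)$ and $(2,2)$. After wedging with $\omega^{n-3}$, the right-hand side therefore has bidegree $(n+1,n+1)$ and vanishes identically. You derive the identity before using any positivity, so it would give $\eta=0$ for every $L$ and every smooth metric. That is false: the contact form on $\mathbb P^3$ is a section of $\Omega^1_{\mathbb P^3}\otimes\mathcal O(2)$ with $\theta\wedge d\theta\neq 0$. Replacing $\omega^{n-3}$ by $\omega^{n-4}$ does not help, because for $n=3$ such a term cannot exist. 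Your sketched derivation also has nothing to work with. There is no primitive of $\eta$ of the form $\nabla'(\cdots)$ built from $\theta$ (the natural candidate $\theta\wedge\theta$ is zero), so there is nothing to move $\nabla'$ onto. The same defect is why your Step 3 error term contains $|\nabla'_\varepsilon\theta|^2$, hence $\partial\varphi_\varepsilon$, which you cannot bound uniformly.

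\textbf{The missing idea is to apply Stokes to $\|\nabla'\theta\|^2$ instead of $\|\eta\|^2$.} Since $\bar\partial\theta=0$, the curvature relation on $L^{-1}$ gives $\bar\partial(\nabla'\theta)=-\Theta\wedge\theta$. Stokes applied to $\{\theta,\nabla'\theta\}\wedge\omega^{n-2}$ (the Bochner formula for the $L^{-1}$-valued $(1,0)$-form $\theta$) then yields
\[
\int_X|\nabla'\theta|^2\,dV_\omega=-\int_X i\Theta\wedge i\theta\wedge\bar\theta\,e^{2\varphi}\wedge\frac{\omega^{n-2}}{(n-2)!}.
\]
Work at a point where $\omega=\sum_j i\,dz_j\wedge d\bar z_j$ and $i\Theta=\sum_j\gamma_j\,i\,dz_j\wedge d\bar z_j$. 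There the right-hand integrand equals $\sum_k\bigl(\sum_{j\neq k}\gamma_j\bigr)|\theta_k|^2e^{2\varphi}\,dV_\omega$, so the curvature meets only $\theta$, never $\nabla'\theta$. For $i\Theta\ge 0$ this forces $\nabla'\theta=0$, hence $\eta=\theta\wedge\nabla'\theta=0$. This is Demailly's argument.

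\textbf{The regularization in Step 3 also needs correcting.} Smooth weights with $2i\partial\bar\partial\varphi_\varepsilon\ge-\varepsilon\omega$ exist only when $L$ is nef. For a pseudo-effective $L$, Demailly's regularization gives smooth $\varphi_\varepsilon\downarrow\varphi$ with $2i\partial\bar\partial\varphi_\varepsilon\ge-\lambda_\varepsilon\omega$. The functions $\lambda_\varepsilon$ are continuous, uniformly bounded, and tend to $0$ outside the set where $\Theta_h$ has positive Lelong numbers. By Siu's theorem that set is a countable union of proper analytic subsets, so it has measure zero.
\begin{itemize}
\item The identity above then gives $\|\nabla'_\varepsilon\theta\|^2\le (n-1)\int_X\lambda_\varepsilon|\theta|^2_\varepsilon\,dV_\omega$.
\item This tends to $0$ by dominated convergence, because $|\theta|^2_\varepsilon=|\theta_\alpha|^2e^{2\varphi_\varepsilon}$ is uniformly bounded.
\item Finally, $|\eta|_\varepsilon\le C|\theta|_\varepsilon|\nabla'_\varepsilon\theta|_\varepsilon$ gives $\int_X|\eta|^2e^{4\varphi_\varepsilon}\,dV_\omega\to 0$, and your monotonicity step concludes.
\end{itemize}
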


For a line bundle \(L\) on a compact complex manifold, its Iitaka dimension is
\[
\kappa(L)
:=
\begin{cases}
\max_{m>0}\dim \Phi_{|L^m|}(X), & \text{if } H^0(X,L^m)\neq 0
\text{ for some }m>0,\\
-\infty, & \text{otherwise.}
\end{cases}
\]
Here \(\Phi_{|L^m|}\) denotes the meromorphic map defined by the complete linear
system \(|L^m|\).

If \(L\) is pseudo-effective, its numerical dimension is denoted by \(\nu(L)\).
For nef \(L\), this can be characterized by
\[
\nu(L)=\max\{k\geq 0: c_1(L)^k\neq 0\}.
\]
More generally, for pseudo-effective line bundles one uses the numerical
dimension of the positive part in the divisorial Zariski decomposition. Following \cite{PRT} We call
a pseudo-effective line bundle \(L\) abundant if
\[
\kappa(L)=\nu(L).
\]
For conormal bundles of codimension-one foliations, Touzet's structure theorem \cite{TouzetConormal}
implies that the only possible values are
\[
\nu(N^*_{\mathcal F})=0,\ \kappa(N^*_{\mathcal F})=0,
\]
or
\[
\nu(N^*_{\mathcal F})=1,\ \kappa(N^*_{\mathcal F})=1,
\]
or
\[
\nu(N^*_{\mathcal F})=1,\ \kappa(N^*_{\mathcal F})=-\infty.
\]
Thus the last case is precisely the non-abundant case. We shall use the following theorem of Pereira--Rousseau--Touzet.

\begin{theorem}[Pereira--Rousseau--Touzet]\label{thm:PRT}
Let \(X\) be a compact K\"ahler manifold and let \(\mathcal F\) be a
codimension-one holomorphic foliation on \(X\). Assume that the conormal bundle
\(N^*_{\mathcal F}\) is pseudo-effective and not abundant. Then there exists an
integer \(p\geq 2\) and an invertible subsheaf
\[
\mathcal M\subset \Omega_X^p \qquad \text{ such that } \qquad\kappa(\mathcal M)=p.
\]
\end{theorem}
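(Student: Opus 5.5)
This theorem of Pereira--Rousseau--Touzet is quoted from \cite{PRT} and is used as an external input; it is not reproved here. If one wanted to reconstruct the argument, the plan would run as follows.

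\textbf{Setup.} Write \(L:=N^*_{\mathcal F}\subset \Omega_X^1\). This is a pseudo-effective line bundle, and by Touzet's trichotomy, non-abundance means \(\nu(L)=1\) and \(\kappa(L)=-\infty\). The first step is to use the divisorial Zariski decomposition \(c_1(L)=P+N\), where the positive part \(P\) is nef with \(P^2=0\) and \(P\neq 0\). One then shows, following Touzet \cite{TouzetConormal}, that \(P\) is represented by a closed positive current \(T\) with minimal singularities. This current is invariant under the foliation and has vanishing self-intersection, in the sense that its local potentials are constant along the leaves.

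\textbf{Transverse structure.} The key structural step is that, in the non-abundant case, \(\mathcal F\) is transversely hyperbolic. The current \(T\) induces a transverse Poincar\'e-type metric on the leaf space. Via a developing map and the associated monodromy representation \(\rho:\pi_1(X\setminus \mathrm{Sing})\to \mathrm{PSL}_2(\mathbb R)\), one shows that \(\mathcal F\) is the pull-back of a foliation on a polydisc quotient \(\mathbb H^N/\Gamma\) under a holomorphic map \(\Psi: X\dashrightarrow \mathbb H^N/\Gamma\). Here \(\Gamma\) is an irreducible lattice, and the case \(N\geq 2\) is forced by \(\kappa(L)=-\infty\), since \(N=1\) would give a fibration onto a curve and hence \(\kappa=1\). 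I expect this step, namely the factorization through a Shimura/polydisc modular quotient, to be the main obstacle. It requires Corlette--Simpson-type rigidity for rank-two representations with non-Zariski-dense or non-rigid image.

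\textbf{Conclusion.} Given the factorization, let \(p:=\dim \Psi(X)\geq 2\). The image is a subvariety of a quotient of \(\mathbb H^N\) by a lattice, so it is of general type: the canonical bundle of \(\mathbb H^N/\Gamma\) is ample, in the orbifold sense, via the Bergman metric. Take \(\mathcal M\subset \Omega_X^p\) to be the saturation of the pull-back \(\Psi^*K_{\Psi(X)}\), computed on a resolution and pushed down, using that reflexive sections extend over codimension \(\geq 2\). Its Kodaira dimension is then \(\kappa(\mathcal M)\geq \kappa(K_{\Psi(X)})=p\). Since \(\mathcal M\) is a rank-one subsheaf of \(\Omega_X^p\), the Bogomolov--Castelnuovo--De Franchis inequality gives \(\kappa(\mathcal M)\leq p\), hence \(\kappa(\mathcal M)=p\), which is the required invertible subsheaf.
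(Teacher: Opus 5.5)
As a citation, your treatment agrees with the paper. The statement is not proved there; it is imported as \cite[Theorem~4.2]{PRT} and used only inside the proof of Proposition~\ref{prop:foliation}. The reconstruction you add, however, has a genuine gap in its last step. You conclude that $Z:=\Psi(X)$ is of general type because $K_{\mathbb H^N/\Gamma}$ is ample, in the orbifold sense. Ampleness of the ambient canonical bundle does not pass to subvarieties: a smooth quintic surface in $\mathbb P^3$ has ample canonical bundle $\mathcal O(1)$ and can contain lines. Worse, when $\Gamma$ has torsion the coarse quotient itself need not be of general type; for instance the Hilbert modular surface of $\mathbb Q(\sqrt5)$ is rational. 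So even in the dominant case $p=N$, your inequality $\kappa(\mathcal M)\geq \kappa(Z)$ can give $-\infty$ and proves nothing.

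What the argument actually needs is that the orbifold base $(Z,\Delta_\Psi)$ of a suitable (neat) model of $\Psi$ is of general type. In Campana's theory, the saturation of $\Psi^*K_Z$ in $\Omega_X^p$ has Kodaira dimension $\kappa(Z,K_Z+\Delta_\Psi)$, which can be strictly larger than $\kappa(Z)$. Proving that $(Z,\Delta_\Psi)$ is of general type is a Lang-type hyperbolicity statement for subvarieties of polydisc quotients, not a formal consequence of ampleness. It needs real input beyond Touzet's structure theorem, for example passing to a sufficiently small torsion-free finite-index subgroup of $\Gamma$ and using hyperbolicity results for subvarieties of locally symmetric varieties, or Zuo-type results on Shafarevich maps of Zariski-dense representations.

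Separately, you state the Corlette--Simpson dichotomy backwards. It is the non-rigid Zariski-dense rank-two representations that factor through orbicurves, while the rigid ones are pulled back from polydisc Shimura modular stacks. Also, Corlette--Simpson is stated for quasi-projective bases, so in the compact K\"ahler setting this step has to come from Touzet's theorem rather than from Corlette--Simpson directly.
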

\section{Proof of Main Theorem}
\subsection{Recursive extension criterion in dimension four}
Before giving the proof, let us prove some key lemmas for the proof.

\begin{lemma}\label{lem:finite-support-cohomology}
Let $X$ be a compact complex fourfold with no irreducible proper analytic subvarieties of complex codimension-one or $2$. Let $\cI\subseteq \cO_X$ be a nonzero coherent ideal sheaf. Then, for every line bundle $L$ on $X$, the natural map
\[
 H^q(X,L\otimes \cI)\longrightarrow H^q(X,L)
\]
is an isomorphism for every $q\geq 3$. In particular,
\[
H^3(X,L)\neq 0\quad\Longrightarrow\quad H^3(X,L\otimes \cI)\neq 0.
\]
\end{lemma}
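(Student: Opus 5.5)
The plan is to use the short exact sequence
\[
0\longrightarrow L\otimes\cI\longrightarrow L\longrightarrow L\otimes(\cO_X/\cI)\longrightarrow 0,
\]
which is exact because \(L\) is locally free, and its long exact cohomology sequence
\[
H^{q-1}\bigl(X,L\otimes\cO_X/\cI\bigr)\to H^q(X,L\otimes\cI)\to H^q(X,L)\to H^q\bigl(X,L\otimes\cO_X/\cI\bigr).
\]
The map \(H^q(X,L\otimes\cI)\to H^q(X,L)\) is an isomorphism as soon as \(H^{q-1}(X,L\otimes\cO_X/\cI)=0\) and \(H^q(X,L\otimes\cO_X/\cI)=0\). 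So the whole point is a cohomological dimension bound for the quotient sheaf \(Q:=\cO_X/\cI\); it suffices to show \(H^j(X,L\otimes Q)=0\) for all \(j\geq 2\).

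\textbf{Support of \(Q\).} The support \(\Supp Q=V(\cI)\) is a closed analytic subset of \(X\), by Nadel-type coherence. It is proper because \(\cI\neq 0\) and \(X\) is connected, hence irreducible. Each irreducible component of \(V(\cI)\) is therefore a proper irreducible analytic subvariety of \(X\). By hypothesis (1) no component has codimension \(1\) or \(2\), so every component has dimension at most \(1\). Thus \(Z:=V(\cI)\) is a compact analytic set of dimension \(\leq 1\), possibly empty.

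\textbf{Vanishing on \(Z\).} The sheaf \(L\otimes Q\) is coherent and supported on \(Z\). It is a module over \(\cO_X/\cI^k\) for some \(k\); in fact it is already an \(\cO_X/\cI\)-module, so it is a coherent sheaf on the complex space \((Z,\cO_X/\cI)\), and its cohomology on \(X\) equals its cohomology on that space. The step I expect to need the most care is the vanishing \(H^j=0\) for \(j>\dim Z\) for coherent sheaves on a compact complex space of dimension \(\leq1\); this is the analytic analogue of Grothendieck vanishing. I would cite the Reiffen/Siu theorem (cohomological dimension of a coherent sheaf on a complex space of dimension \(d\) is at most \(d\)). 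Alternatively, I would argue directly: cover \(Z\) by finitely many Stein open sets, which is possible since a \(1\)-dimensional compact space is covered by two Stein opens (the complement of a finite set in each noncompact component is Stein). A Leray cover with two Stein open sets then gives \(H^j=0\) for \(j\geq2\).

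\textbf{Conclusion.} With \(H^j(X,L\otimes Q)=0\) for all \(j\geq 2\), both flanking terms in the long exact sequence vanish when \(q\geq 3\), since then \(q-1\geq 2\). This gives the isomorphism \(H^q(X,L\otimes\cI)\cong H^q(X,L)\) for all \(q\geq 3\). The final assertion follows at once from the case \(q=3\).
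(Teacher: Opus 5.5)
Your proof is correct and follows the same route as the paper: the short exact sequence $0\to L\otimes\cI\to L\to L\otimes Q\to 0$ with $Q=\cO_X/\cI$, together with the vanishing $H^j(X,L\otimes Q)=0$ for $j\geq 2$, which holds because the hypotheses force $\dim\Supp Q\leq 1$. The paper simply asserts this vanishing for sheaves supported in dimension at most one, whereas you justify it, either by the dimension bound on coherent cohomology or by a cover of the support by two Stein open sets.
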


\begin{proof}
If $\cI=\cO_X$, there is nothing to prove. Otherwise set
\[
Q:=\cO_X/\cI.
\]
Since $X$ is a connected complex manifold, it is irreducible. A nonzero coherent ideal sheaf on an irreducible smooth complex space is generically equal to $\cO_X$; hence $\Supp(Q)$ is a proper analytic subset of $X$. By the hypothesis, every irreducible component of $\Supp(Q)$ has codimension at least $3$. Since $\dim X=4$, this means
\[
\dim \Supp(Q)\leq 1.
\]
For a coherent sheaf supported in dimension at most $1$, one has
\[
H^q(X,L\otimes Q)=0\qquad(q\geq 2).
\]
The short exact sequence
\[
0\to L\otimes\cI\to L\to L\otimes Q\to 0
\]
therefore gives an isomorphism
\[
H^q(X,L\otimes\cI)\simeq H^q(X,L)
\]
for every $q\geq 3$.
\end{proof}

\begin{lemma}\label{lem:dps}
Let $X$ be as in Lemma~\ref{thm:main}. Assume that $K$ is pseudo-effective. If $m>0$ and
\[
H^3(X,K^{m+1})\neq 0,
\]
then
\[
H^0(X,\Omega_X^1\otimes K^m)\neq 0.
\]
\end{lemma}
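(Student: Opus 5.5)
The proof will combine Lemma~\ref{lem:finite-support-cohomology} with the Hard Lefschetz theorem of Demailly--Peternell--Schneider (Theorem~\ref{thm:DPS-HL}). Since \(K\) is pseudo-effective, we fix a singular Hermitian metric \(h\) on \(K\) with semipositive curvature current. Then \(h^{\otimes m}\) is a singular metric on \(L:=K^m\), again with semipositive curvature current, and its multiplier ideal \(\cI:=\mathcal I(h^{\otimes m})\) is coherent by Nadel's theorem.

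The first step is to check that \(\cI\) is a nonzero ideal. Locally the weight \(m\varphi\) is plurisubharmonic and hence locally bounded above. It is not identically \(-\infty\), because \(h\) has \(L^1_{\mathrm{loc}}\) weights. So \(e^{-2m\varphi}\) is integrable on a neighbourhood of any point where the Lelong number of \(m\varphi\) is zero, for example by Skoda's integrability lemma. The set of points with positive Lelong number is a countable union of analytic sets (Siu), and so it is a proper subset; this forces \(\cI\neq 0\). Alternatively, one can simply note that \(\mathcal O_X/\cI\) is supported on a proper analytic subset, which is all that Lemma~\ref{lem:finite-support-cohomology} needs.

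The second step applies Lemma~\ref{lem:finite-support-cohomology} with the line bundle \(K^{m+1}\) and the ideal \(\cI\). This uses the hypothesis that \(X\) has no divisors or surfaces, so that \(\mathcal O_X/\cI\) is supported in dimension \(\le 1\). The lemma gives
\[
H^3\bigl(X,K^{m+1}\otimes\cI\bigr)\cong H^3\bigl(X,K^{m+1}\bigr)\neq 0 .
\]

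The third step applies Theorem~\ref{thm:DPS-HL} with \(n=4\), \(q=3\), \(L=K^m\) and metric \(h^{\otimes m}\). Since \(K_X\otimes L=K^{m+1}\), this yields a surjection
\[
H^0\bigl(X,\Omega_X^1\otimes K^m\otimes\cI\bigr)\twoheadrightarrow H^3\bigl(X,K^{m+1}\otimes\cI\bigr).
\]
The target is nonzero, so the source is nonzero. The inclusion \(\cI\subset\mathcal O_X\) gives an inclusion of sheaves \(\Omega_X^1\otimes K^m\otimes\cI\hookrightarrow\Omega_X^1\otimes K^m\), because \(\Omega_X^1\otimes K^m\) is locally free and tensoring with it is exact. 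Taking global sections, which is left exact, then gives \(H^0(X,\Omega_X^1\otimes K^m)\neq 0\).

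The only delicate point is the first step: we need the multiplier ideal to be a nonzero ideal, so that its cosupport is a proper analytic subset and Lemma~\ref{lem:finite-support-cohomology} applies. I expect the standard Skoda/Siu argument described above to settle this without difficulty. Everything else is a direct combination of the results already stated.
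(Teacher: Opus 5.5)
Your proposal is correct and follows the paper's proof essentially step for step: Lemma~\ref{lem:finite-support-cohomology} applied to \(K^{m+1}\) and the multiplier ideal, then Demailly--Peternell--Schneider hard Lefschetz with \(n=4\), \(q=3\), then the inclusion \(\mathcal I\subset\mathcal O_X\). The only differences are minor. You use the metric \(h^{\otimes m}\) where the paper takes an arbitrary semipositive metric \(h_m\) on \(K^m\), and you justify \(\mathcal I(h^{\otimes m})\neq 0\) via Skoda and Siu, which the paper simply asserts.
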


\begin{proof}
Since $K$ is pseudo-effective, $K^m$ is pseudo-effective. Choose a singular Hermitian metric $h_m$ on $K^m$ with semipositive curvature current, and let
\[
\cI_m:=\mathcal I(h_m)
\]
be its multiplier ideal sheaf. The ideal $\cI_m$ is a nonzero coherent ideal sheaf. By Lemma~\ref{lem:finite-support-cohomology}, applied to $L=K^{m+1}$ and $\cI=\cI_m$, we have
\[
H^3(X,K^{m+1}\otimes \cI_m)\simeq H^3(X,K^{m+1})\neq 0.
\]
By \cite[Theorem~2.1]{DPS01}, applied in dimension $4$ and degree $q=3$, there is a surjective map
\[
H^0(X,\Omega_X^1\otimes K^m\otimes \cI_m)
\twoheadrightarrow
H^3(X,K^{m+1}\otimes \cI_m).
\]
Thus
\[
H^0(X,\Omega_X^1\otimes K^m\otimes \cI_m)\neq 0.
\]
Since $\cI_m\subseteq \cO_X$, this gives a nonzero section of $\Omega_X^1\otimes K^m$.
\end{proof}

\begin{lemma}
\label{lem:ext}
Let $X$ be a smooth compact complex fourfold, let $Z\subset X$ be a closed subscheme
whose support has codimension at least $3$, and let $L$ be a line bundle on $X$.
Then the natural map
\[
H^1(X,L)\longrightarrow \Ext^1_X(I_Z,L)
\]
is an isomorphism. More generally, for line bundles $A$ and $B$ on $X$, there is
a natural isomorphism
\[
\Ext^1_X(I_Z\otimes A,B)\simeq H^1(X,B\otimes A^{-1}).
\]
\end{lemma}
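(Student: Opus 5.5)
The plan is to use the local-to-global spectral sequence for Ext together with depth estimates for sheaves supported in codimension at least three. First I reduce the general statement to the special one. Since $A$ is locally free, we have
\[
\Ext^i_X(I_Z\otimes A,B)\simeq \Ext^i_X(I_Z,B\otimes A^{-1}).
\]
So it suffices to treat $\Ext^1_X(I_Z,L)$ with $L:=B\otimes A^{-1}$. The natural map in the statement comes from applying $\Ext^\bullet_X(-,L)$ to
\[
0\to I_Z\to\cO_X\to\cO_Z\to 0 .
\]
This gives the long exact sequence
\[
\Ext^1_X(\cO_Z,L)\to H^1(X,L)\to \Ext^1_X(I_Z,L)\to \Ext^2_X(\cO_Z,L).
\]
It therefore suffices to prove that $\Ext^i_X(\cO_Z,L)=0$ for $i\le 2$.

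For this vanishing I use the local-to-global spectral sequence
\[
E_2^{p,q}=H^p\bigl(X,\mathcal Ext^q_X(\cO_Z,L)\bigr)\Rightarrow \Ext^{p+q}_X(\cO_Z,L).
\]
Since $X$ is smooth and $\operatorname{codim}_X|Z|\ge 3$, the local rings $\cO_{X,x}$ are regular, hence Cohen--Macaulay. The grade of the annihilator of $\cO_{Z,x}$ is then at least $3$, which gives
\[
\mathcal Ext^q_X(\cO_Z,L)=0 \qquad\text{for } q<3.
\]
This is the standard statement $\mathcal Ext^q(\mathscr F,\cO_X)=0$ for $q<\operatorname{codim}\Supp\mathscr F$; see \cite{OSS} or \cite{Hartshorne1980}. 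It holds in the analytic category because the stalk computation is local and depends only on the depth of $\cO_{X,x}$ along $\Ann(\cO_{Z,x})$. Consequently every $E_2^{p,q}$ with $p+q\le 2$ vanishes, so $\Ext^i_X(\cO_Z,L)=0$ for $i\le 2$.

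An equivalent way to organize the argument is through the sheaf level. Applying $\mathcal Hom(-,L)$ to the ideal sequence, the vanishing of $\mathcal Ext^1$ and $\mathcal Ext^2$ of $\cO_Z$ gives
\[
\mathcal Hom(I_Z,L)=L \qquad\text{and}\qquad \mathcal Ext^1(I_Z,L)\simeq\mathcal Ext^2(\cO_Z,L)=0.
\]
The local-to-global spectral sequence for $I_Z$ then yields $\Ext^1_X(I_Z,L)\simeq H^1(X,L)$ directly. The first of these identities is also the reflexivity fact $I_Z^{\vee\vee}\simeq\cO_X$ recalled in the preliminaries.

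The only real obstacle is the local depth vanishing of $\mathcal Ext^q$ below the codimension. I would justify it first by noting that, at a point $x$, one can choose a regular sequence of length $3$ in $\Ann(\cO_{Z,x})$, and then apply the standard inductive argument for grade. Everything else is formal homological algebra. Note that compactness is not needed beyond making the global groups meaningful, and the absence-of-surfaces hypothesis enters only through the assumption $\operatorname{codim}|Z|\ge 3$.
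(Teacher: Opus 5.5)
Your proposal is correct and follows essentially the same route as the paper. Both arguments use the long exact sequence coming from $0\to I_Z\to\cO_X\to\cO_Z\to 0$, prove $\mathcal Ext^q_X(\cO_Z,L)=0$ for $q<3$ from the depth of the annihilator in the regular local ring $\cO_{X,x}$, pass to global groups via the local-to-global spectral sequence, and reduce the general case via $\Ext^1_X(I_Z\otimes A,B)\simeq\Ext^1_X(I_Z,B\otimes A^{-1})$. Your sheaf-level variant ($\mathcal Hom(I_Z,L)\simeq L$ and $\mathcal Ext^1(I_Z,L)=0$, then the five-term sequence) is a valid repackaging of the same input.
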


\begin{proof}
If $Z=\varnothing$, then $I_Z=\cO_X$, and the assertion is immediate from
\(\Ext^1_X(\cO_X,L)=H^1(X,L)\). Assume therefore that \(Z\neq\varnothing\).
Consider the short exact sequence
\[
0\longrightarrow I_Z\longrightarrow \cO_X\longrightarrow \cO_Z\longrightarrow 0.
\]
Applying \(\operatorname{RHom}_X(-,L)\) gives the exact segment
\[
\Ext^1_X(\cO_Z,L)
\longrightarrow
\Ext^1_X(\cO_X,L)
\longrightarrow
\Ext^1_X(I_Z,L)
\longrightarrow
\Ext^2_X(\cO_Z,L).
\]
Since \(\Ext^1_X(\cO_X,L)=H^1(X,L)\), it is enough to prove that
\[
\Ext^j_X(\cO_Z,L)=0
\qquad
\text{for } j=1,2.
\]
We prove this by first proving a local vanishing statement.
\begin{claim}
One has
\[
\mathcal Ext^q_X(\cO_Z,L)=0
\qquad
\text{for every } q<3.
\]
\end{claim}
\begin{proof}[Proof of the claim]
The assertion is local on \(X\). Fix \(x\in X\). If \(x\notin \operatorname{Supp}Z\),
then \((\cO_Z)_x=0\), so there is nothing to prove. Assume \(x\in \operatorname{Supp}Z\).
Set
\[
R:=\cO_{X,x},
\qquad
J:=(I_Z)_x.
\]
Then \((\cO_Z)_x\simeq R/J\). Since \(X\) is smooth, \(R\) is a regular local
ring. Hence \(R\) is Cohen--Macaulay by \cite[Lemma~10.106.3]{stacks-00NN}.
Moreover, by \cite[Lemma~10.110.6]{stacks-065U}, the localizations
\(R_{\mathfrak p}\) are regular local rings for all prime ideals \(\mathfrak p\subset R\).
Since \(\operatorname{Supp}Z\) has codimension at least \(3\), every prime
\(\mathfrak p\supset J\) satisfies
\[
\operatorname{ht}(\mathfrak p)\ge 3.
\]
Using the standard formula
\[
\operatorname{depth}_J R
=
\inf_{\mathfrak p\supset J}\operatorname{depth}R_{\mathfrak p}
\]
for a finite module over a Noetherian ring \cite[Proposition~1.2.10]{bruns-herzog},
and using that each \(R_{\mathfrak p}\) is regular local, hence Cohen--Macaulay,
we get
\[
\operatorname{depth}_J R
=
\inf_{\mathfrak p\supset J}\operatorname{depth}R_{\mathfrak p}
=
\inf_{\mathfrak p\supset J}\dim R_{\mathfrak p}
=
\inf_{\mathfrak p\supset J}\operatorname{ht}(\mathfrak p)
\ge 3.
\]
Since \(L_x\) is a free \(R\)-module of rank one, \(\operatorname{depth}_J L_x
=\operatorname{depth}_J R\ge 3\). By the depth--Ext vanishing theorem
\cite[Lemma 47.11.1]{stacks-0AVZ}, if \(N\) is a finite \(R\)-module killed by a
power of \(J\), then
\[
\Ext^q_R(N,L_x)=0
\qquad
\text{for } q<\operatorname{depth}_J L_x.
\]
Applying this to \(N=R/J\), we obtain
\[
\Ext^q_R(R/J,L_x)=0
\qquad
\text{for } q<3.
\]
Equivalently,
\[
\mathcal Ext^q_X(\cO_Z,L)_x=0
\qquad
\text{for } q<3.
\]
Since \(x\) was arbitrary, the claim follows.
\end{proof}
By the local-to-global Ext spectral sequence 
\[
E_2^{p,q}
=
H^p\!\left(X,\mathcal Ext^q_X(\cO_Z,L)\right)
\Longrightarrow
\Ext^{p+q}_X(\cO_Z,L),
\]
see \cite[Theorem 7.3.3]{Godement}, the claim implies that there are no nonzero
terms with total degree \(p+q<3\). Therefore
\[
\Ext^j_X(\cO_Z,L)=0
\qquad
\text{for } j<3.
\]
In particular,
\[
\Ext^1_X(\cO_Z,L)=\Ext^2_X(\cO_Z,L)=0.
\]
The exact sequence above then gives an isomorphism
\[
H^1(X,L)=\Ext^1_X(\cO_X,L)
\xrightarrow{\sim}
\Ext^1_X(I_Z,L).
\]
Finally, since \(A\) is locally free of rank one, there is a natural isomorphism
\[
\operatorname{RHom}_X(I_Z\otimes A,B)
\simeq
\operatorname{RHom}_X(I_Z,B\otimes A^{-1}).
\]
Taking first cohomology and applying the first part with \(L=B\otimes A^{-1}\),
we obtain
\[
\Ext^1_X(I_Z\otimes A,B)
\simeq
\Ext^1_X(I_Z,B\otimes A^{-1})
\simeq
H^1(X,B\otimes A^{-1}).
\]
This proves the lemma.
\end{proof}

\begin{lemma}
\label{lem:rank-one-quotients}
Let $X$ be as in Lemma~\ref{thm:main}. Let $\mathscr A$ be a rank-two reflexive sheaf and let $L$ be a line bundle on $X$.
\begin{enumerate}[label=\textup{(\arabic*)}]
\item If $L\to \mathscr A$ is a nonzero morphism, then there exists a closed subscheme $Z\subset X$ whose support has codimension at least $3$ and an exact sequence
\[
0\to L\to \mathscr A\to I_Z\otimes \det(\mathscr A)\otimes L^{-1}\to 0.
\]
\item If $\mathscr A\to L$ is a nonzero morphism, then there exists a closed subscheme $Z\subset X$ whose support has codimension at least $3$ and an exact sequence
\[
0\to \det(\mathscr A)\otimes L^{-1}\to \mathscr A\to I_Z\otimes L\to 0.
\]
\end{enumerate}
\end{lemma}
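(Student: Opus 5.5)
Part (1) is the basic case; part (2) will be reduced to it by dualizing.

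For (1), let $s:L\to\mathscr A$ be nonzero. Since $L$ is a line bundle and $\mathscr A$ is torsion-free, $s$ is injective. Its image need not be saturated, so I first pass to the saturation $L'\subset\mathscr A$, the kernel of $\mathscr A\to(\mathscr A/L)/\mathrm{tors}$. By the saturation fact recalled in the preliminaries \cite[Ch.~II, Lemma~1.1.16]{OSS}, $L'$ is a rank-one reflexive sheaf, hence a line bundle. The inclusion $L\hookrightarrow L'$ is a nonzero section of $L'\otimes L^{-1}$. By Lemma~\ref{lem:line-sections} (no divisors) this section is nowhere vanishing, so $L=L'$ and $L$ is saturated.

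Hence $Q:=\mathscr A/L$ is torsion-free of rank one. A torsion-free rank-one sheaf embeds in its double dual $Q^{\vee\vee}$, which is a line bundle $M$, and the quotient $M/Q$ is supported on an analytic subset of codimension at least two. So $Q\simeq I_W\otimes M$ for an ideal $I_W$ with cosupport of codimension $\ge 2$. By hypothesis (1) of Lemma~\ref{thm:main}, every irreducible component of $|W|$ has codimension at least $3$; set $Z:=W$. The sequence $0\to L\to\mathscr A\to I_Z\otimes M\to0$ is exact off $|Z|$, where all sheaves are locally free away from codimension $\ge 3$, so determinants are multiplicative there. This gives $\det\mathscr A\simeq L\otimes M$ off a codimension-$\ge2$ set. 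Since both sides are line bundles, the isomorphism extends, and $M\simeq\det(\mathscr A)\otimes L^{-1}$.

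For (2), take $t:\mathscr A\to L$ nonzero and set $K_t:=\ker t$. The image of $t$ is a nonzero subsheaf of $L$, hence torsion-free, so $K_t$ is saturated in $\mathscr A$. It is therefore reflexive of rank one, i.e.\ a line bundle. The image is $I\otimes L$ for some ideal $I$. If the cosupport of $I$ had a divisorial component, that component would be a divisor on $X$; more simply, $I^{\vee\vee}\otimes L\subset L$ is given by a section of a line bundle, which Lemma~\ref{lem:line-sections} forces to be nowhere vanishing. Thus $I$ has cosupport of codimension $\ge2$, hence $\ge3$ by hypothesis, and we set $I=I_Z$. The same determinant computation off codimension $\ge 2$ gives $K_t\simeq\det(\mathscr A)\otimes L^{-1}$. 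Alternatively, (2) follows by applying the argument of (1) to $K_t\hookrightarrow\mathscr A$ and identifying the quotient.

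The main point requiring care is the saturation step in (1): showing that the given map $L\to\mathscr A$ has saturated image. Without the no-divisor hypothesis the quotient could acquire a divisorial torsion part and the formula would change to $\det(\mathscr A)\otimes L^{-1}(-D)$. Lemma~\ref{lem:line-sections} is exactly what removes this. The determinant identification is routine once one restricts to the open set where everything is locally free and uses that line bundles extend uniquely across codimension two (Hartshorne's reflexive extension).
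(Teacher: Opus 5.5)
Your proposal is correct and follows essentially the same route as the paper. In (1) you saturate the image, use Lemma~\ref{lem:line-sections} to see that $L$ is already saturated, and write the torsion-free rank-one quotient as $I_Z\otimes Q^{\vee\vee}$ with $Q^{\vee\vee}\simeq\det(\mathscr A)\otimes L^{-1}$; in (2) you use that the kernel is saturated, hence a line bundle, and identify it via determinants. The only cosmetic difference is in (2): you first pass through $I^{\vee\vee}$ to get codimension $\ge 2$, whereas the paper notes directly that the cosupport of the image ideal is a proper analytic subset and so has codimension $\ge 3$ by hypothesis.
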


\begin{proof}
Since $X$ is connected and smooth, it is irreducible. Since $\mathscr A$ is reflexive, it is torsion-free.
We prove \textup{(1)}. Let $\phi:L\to \mathscr A$ be a nonzero morphism. Since $X$ is irreducible and $\mathscr A$ is torsion-free, the induced map at the generic point is nonzero. Hence $\ker(\phi)$ has rank zero. But $\ker(\phi)\subset L$, and $L$ is torsion-free, so $\ker(\phi)=0$. Thus, $\phi$ is injective as a morphism of sheaves. Let $M\subset \mathscr A$ be the saturation of the sheaf-theoretic image of $\phi$. Thus, $\operatorname{im}(\phi)\subset M\subset \mathscr A$ and $\mathscr A/M$ is torsion-free. Since $\mathscr A$ is reflexive and $\mathscr A/M$ is torsion-free, $M$ is reflexive by the standard depth lemma, or equivalently by \cite[Lemma 15.23.5]{stacks-0AUY}. Since $M$ has rank one and $X$ is smooth, $M$ is a line bundle. The morphism $\phi$ factors through $M$, giving a nonzero morphism $L\to M$. Equivalently, we get a nonzero section of the line bundle $M\otimes L^{-1}$. By Lemma~\ref{lem:line-sections}, this section is nowhere vanishing, because $X$ has no codimension-one subvarieties. Hence $M\otimes L^{-1}\simeq \mathcal O_X$, and therefore $M\simeq L$. Thus, the saturated image gives an exact sequence
\[
0\to L\to \mathscr A\to Q\to 0
\]
where $Q$ is torsion-free of rank one. Taking determinants gives
\[
Q^{**}\simeq \det(Q)\simeq \det(\mathscr A)\otimes L^{-1}.
\]
The natural map $Q\to Q^{**}$ is injective. Hence $Q$ is a rank-one torsion-free subsheaf of the line bundle $\det(\mathscr A)\otimes L^{-1}$. Therefore, there exists a coherent ideal sheaf $I_Z\subset \mathcal O_X$ such that
\[
Q\simeq I_Z\otimes \det(\mathscr A)\otimes L^{-1}.
\]
Indeed, after tensoring the inclusion $Q\subset Q^{**}$ by $\det(\mathscr A)^{-1}\otimes L$, one obtains a coherent rank-one subsheaf of $\mathcal O_X$, hence a coherent ideal sheaf. Since $Q\to Q^{**}$ is an isomorphism in codimension one, the support of $Z$ has codimension at least two. Since $X$ has no irreducible proper analytic subvarieties of codimension two, it follows that $\operatorname{codim}_X |Z|\ge 3$. This proves \textup{(1)}.

We prove \textup{(2)}. Let $\psi:\mathscr A\to L$ be a nonzero morphism. Since $X$ is irreducible, the induced map at the generic point is a nonzero linear map from a two-dimensional vector space to a one-dimensional vector space, hence is surjective. Therefore the image of $\psi$ is a rank-one subsheaf of $L$. Thus it has the form $\mathcal I\otimes L\subset L$ for some nonzero coherent ideal sheaf $\mathcal I\subset \mathcal O_X$. Write $\mathcal I=I_Z$. The support of $\mathcal O_X/I_Z$ is a proper analytic subset of $X$. Since $X$ has no irreducible proper analytic subvarieties of codimension one or two, we have $\operatorname{codim}_X |Z|\ge 3$. Thus $\psi$ gives a surjection
\[
\mathscr A\to I_Z\otimes L.
\]
Let $N:=\ker(\mathscr A\to I_Z\otimes L)$. The quotient $I_Z\otimes L$ is torsion-free. Since $\mathscr A$ is reflexive and the quotient is torsion-free, $N$ is reflexive by \cite[Lemma 15.23.5]{stacks-0AUY}. It has rank one, so, because $X$ is smooth, $N$ is a line bundle. Taking determinants in
\[
0\to N\to \mathscr A\to I_Z\otimes L\to 0
\]
gives
\[
\det(\mathscr A)\simeq N\otimes \det(I_Z\otimes L).
\]
Since $I_Z$ has cosupport of codimension at least two, $(I_Z)^{**}\simeq \mathcal O_X$, and hence $\det(I_Z)\simeq \mathcal O_X$. Therefore $\det(I_Z\otimes L)\simeq L$, so
\[
N\simeq \det(\mathscr A)\otimes L^{-1}.
\]
Hence we obtain the desired exact sequence
\[
0\to \det(\mathscr A)\otimes L^{-1}\to \mathscr A\to I_Z\otimes L\to 0,
\]
with $\operatorname{codim}_X |Z|\ge 3$. This proves \textup{(2)}.
\end{proof}
\begin{proposition}
\label{prop:foliation}
Let $X$ be as in Lemma~\ref{thm:main}, and suppose that $K=K_X$ is pseudo-effective. If, for some $m>0$,
\[
0\neq \omega\in H^0(X,\Omega_X^1\otimes K^{-m}),
\]
then $K$ is torsion.
\end{proposition}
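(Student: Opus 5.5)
The plan is to convert \(\omega\) into a codimension-one foliation whose conormal bundle is a positive power of \(K\), and then run through Touzet's trichotomy. The lack of divisors rules out every case except the one where \(K\) is torsion.

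\textbf{Step 1: the foliation.} Set \(L:=K^m\). It is pseudo-effective because \(K\) is, and \(\omega\) is a nonzero element of \(H^0(X,\Omega_X^1\otimes L^{-1})\). Theorem~\ref{thm:demailly-frobenius} then shows that the saturated rank-one subsheaf \(N\subset\Omega_X^1\) generated by \(\omega\) is the conormal sheaf \(N^*_{\mathcal F}\) of a codimension-one foliation \(\mathcal F\). Since \(\Omega_X^1\) is reflexive and \(\Omega_X^1/N\) is torsion-free, \(N\) is reflexive of rank one, hence a line bundle.

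\textbf{Step 2: identifying the conormal bundle.} The section \(\omega\) is an injective morphism \(K^m\to\Omega_X^1\) that factors through \(N\). This gives a nonzero section of \(N\otimes K^{-m}\). By Lemma~\ref{lem:line-sections}, that section is nowhere vanishing, because \(X\) has no codimension-one subvarieties. The argument is exactly the one in the proof of Lemma~\ref{lem:rank-one-quotients}(1). Hence \(N^*_{\mathcal F}\simeq K^m\), and in particular the conormal bundle is pseudo-effective.

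\textbf{Step 3: the Kodaira dimension of \(N^*_{\mathcal F}\).} By Lemma~\ref{lem:line-sections}, any nonzero section of \((K^m)^{r}\) with \(r>0\) trivializes that line bundle. So either \(\kappa(K^m)=-\infty\), or some power \(K^{mr}\) is trivial, in which case \(K\) is torsion and we are done. Note that \(\kappa=1\) can never occur. Two independent sections of \(K^{mr}\) would give a nonconstant meromorphic function, which Lemma~\ref{lem:no-meromorphic} forbids. Alternatively, each such section already trivializes \(K^{mr}\), forcing \(h^0(K^{mr})\le 1\).

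\textbf{Step 4: the non-abundant case.} By Step 3 we may assume \(\kappa(N^*_{\mathcal F})=-\infty\). Touzet's list then leaves only the non-abundant possibility \(\nu=1\), \(\kappa=-\infty\). In that case Theorem~\ref{thm:PRT} gives an integer \(p\ge 2\) and an invertible subsheaf \(\mathcal M\subset\Omega_X^p\) with \(\kappa(\mathcal M)=p\ge 2\). In particular some power \(\mathcal M^k\) has a nonzero section. By Lemma~\ref{lem:line-sections}, \(\mathcal M^k\simeq\mathcal O_X\), so \(\kappa(\mathcal M)=0\). This contradicts \(\kappa(\mathcal M)=p\ge 2\), so \(K\) must be torsion.

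\textbf{Expected main obstacle.} The delicate point is applying Touzet's classification and Theorem~\ref{thm:PRT} correctly in the K\"ahler (non-projective) setting. Concretely, one must make sure that the dichotomy "\(\kappa=0\)" versus "non-abundant" really exhausts the pseudo-effective conormal bundles with no sections in any positive power. If \(\kappa=-\infty\) but \(\nu=0\) had to be addressed separately, I would try first to use the abundance of numerically trivial conormal bundles in Touzet's theorem: this would give \(\kappa=0\), hence a section of a power of \(K^m\), and then torsion again by Lemma~\ref{lem:line-sections}.
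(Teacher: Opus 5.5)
Your proposal is correct and follows essentially the same route as the paper: Demailly's integrability theorem and Lemma~\ref{lem:line-sections} give a codimension-one foliation with $N^*\simeq K^m$, and Theorem~\ref{thm:PRT} then rules out the non-abundant case, because on $X$ every line bundle has Iitaka dimension $-\infty$ or $0$. The only difference is the ordering: the paper first proves abundance and then notes that $\kappa(K^m)=-\infty<0\le\nu(K^m)$ contradicts it, so it never needs Touzet's trichotomy. Your worry about $\nu=0$, $\kappa=-\infty$ therefore does not arise, since Theorem~\ref{thm:PRT} as stated covers every non-abundant pseudo-effective conormal bundle.
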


\begin{proof}
The section \(\omega\) is equivalently a nonzero morphism
\[
K^m\longrightarrow \Omega_X^1.
\]
Let \(N^*\subset \Omega_X^1\) be the saturation of its sheaf-theoretic image.
Then \(N^*\) is a rank-one reflexive sheaf. Since \(X\) is smooth, \(N^*\) is a
line bundle. The morphism \(K^m\to\Omega_X^1\) factors through a nonzero morphism
\[
K^m\longrightarrow N^*.
\]
Equivalently, we get a nonzero section of the line bundle \(N^*\otimes K^{-m}\).
By Lemma~\ref{lem:line-sections}, this section is nowhere vanishing. Hence
\[
N^*\simeq K^m.
\]
Since \(K\) is pseudo-effective, \(K^m\) is pseudo-effective. By Demailly's Frobenius integrability Theorem \ref{thm:demailly-frobenius} , applied with \(L=K^m\) and
\(\theta=\omega\), the kernel distribution
\[
S_\omega:=\{\xi\in T_X:\iota_\xi\omega=0\}
\]
is integrable. Equivalently, the saturated conormal line bundle \(N^*\subset
\Omega_X^1\) defines a possibly singular codimension-one holomorphic foliation
\(\mathcal G\). Its conormal bundle is
\[
N^*_{\mathcal G}=N^*\simeq K^m.
\]

\begin{claim}
The conormal bundle \(N^*_{\mathcal G}\) is abundant.
\end{claim}

\begin{proof}[Proof of the claim]
Suppose not. Since \(N^*_{\mathcal G}\simeq K^m\) is pseudo-effective, PRT
\cite[Theorem~4.2]{PRT} gives an integer \(p\geq 2\) and an invertible subsheaf
\[
\mathcal M\subset \Omega_X^p
\]
such that
\[
\kappa(\mathcal M)=p>0.
\]
Therefore there exists \(r>0\) such that
\[
h^0(X,\mathcal M^r)\geq 2.
\]
But \(\mathcal M^r\) is a line bundle. By Lemma~\ref{lem:line-sections}, every
nonzero section of \(\mathcal M^r\) is nowhere vanishing and trivializes
\(\mathcal M^r\). Hence \(\mathcal M^r\simeq\mathcal O_X\). Since \(X\) is
connected and compact,
\[
h^0(X,\mathcal O_X)=1,
\]
a contradiction. Thus \(N^*_{\mathcal G}\) is abundant.
\end{proof}
By abundance,
\[
\kappa(N^*_{\mathcal G})=\nu(N^*_{\mathcal G}).
\]
Using \(N^*_{\mathcal G}\simeq K^m\), this gives
\[
\kappa(K^m)=\nu(K^m).
\]
Assume now that \(K\) is not torsion. Then Lemma~\ref{lem:line-sections}
implies that
\[
H^0(X,K^r)=0
\qquad\text{for every }r>0,
\]
because any nonzero section of \(K^r\) would trivialize \(K^r\). Hence
\[
\kappa(K^m)=-\infty.
\]
On the other hand, \(K^m\) is pseudo-effective, so its numerical dimension satisfies
\[
\nu(K^m)\geq 0.
\]
This contradicts
\[
\kappa(K^m)=\nu(K^m).
\]
Therefore \(K\) is torsion.
\end{proof}
We are now ready to prove Lemma \ref{thm:main}, idea is to analyze the kernel of the holomorphic two form as done in \cite{HoringPeternellRadloff2013}.
\begin{proof}[Proof of Lemma~\ref{thm:main}]
Assume, for contradiction, that
\[
 K \text{ is not torsion.}
\]
Since $X$ is K\"ahler and has no codimension one subvarieties, this implies $H^{2,0}(X)\neq 0$, indeed, otherwise it would be projective (\cite[Chapter 7, Exercise 1]{Voisin_2002},\cite{KodairaEmbedding}). Choose
\[
0\neq \sigma\in H^0(X,\Omega_X^2)
\]
and choose linearly independent forms
\[
\eta_1,\eta_2\in H^0(X,\Omega_X^3).
\]
If $\sigma^2\neq 0$, then $\sigma^2$ is a nonzero section of $K$. By Lemma~\ref{lem:line-sections}, this gives that $K$ is a torsion, a contradiction. Hence,
\begin{equation}\label{eq:sigma-square-zero}
\sigma^2=0.
\end{equation}

Let $U\subset X$ be the complement of the zero locus of $\sigma$. Since this zero locus is a proper analytic subset and $X$ has no irreducible proper analytic subvarieties of complex codimension $1$ or $2$, the set $X\setminus U$ has codimension at least $3$. On $U$, the form $\sigma$ is nonzero and satisfies $\sigma^2=0$, so it has constant rank $2$. Define
\[
\cF_U:=\ker(\sigma:T_U\to \Omega_U^1),\qquad
\cE_U:=\Ann(\cF_U)\subset \Omega_U^1.
\]
Then $\cF_U$ and $\cE_U$ are rank-two vector bundles on $U$. Let
\[
\theta_\sigma:T_X\to \Omega_X^1
\]
be a contraction with $\sigma$. Define $\cE\subset \Omega_X^1$ to be the saturation of $\im(\theta_\sigma)$. Then $\cE$ is a rank-two reflexive sheaf and $\cE|_U=\cE_U$. Let
\[
\cF:=\ker\bigl(T_X\longrightarrow \cE^\vee\bigr),
\]
where the map is induced by inclusion $\cE\subset \Omega_X^1=T_X^\vee$. Since $\cE^\vee$ is torsion-free, $\cF$ is reflexive by \cite[Lemma 15.23.5]{stacks-0AUY}. Moreover $\cF$ has rank two and $\cF|_U=\cF_U$. Since $\cE_U=\Ann(\cF_U)$ and both $\cE$ and $\Ann(\cF)$ are reflexive subsheaves of $\Omega_X^1$ that agree on $U$, we have
\begin{equation}\label{eq:E-ann-F}
\cE=\Ann(\cF).
\end{equation}

On $U$, the decomposable two-form $\sigma$ is a nowhere-zero section of $\det \cE_U$. Since $X\setminus U$ has codimension at least two, this section extends to a nonzero section of $\det \cE$. By Lemma~\ref{lem:line-sections},
\begin{equation}\label{eq:detE}
\det\cE\simeq \cO_X.
\end{equation}
Moreover, on $U$ there is an exact sequence
\[
0\to \cE_U\to \Omega_U^1\to \cF_U^\vee\to 0.
\]
Taking determinants and extending reflexively across $X\setminus U$, we obtain
\[
K=\det \Omega_X^1\simeq \det\cE\otimes \det\cF^\vee.
\]
Using \eqref{eq:detE}, this gives
\begin{equation}\label{eq:detF}
\det\cF\simeq K^{-1}.
\end{equation}

The canonical isomorphism
\[
\Omega_X^3\simeq T_X\otimes K
\]
associates to $\eta_i$ a section
\[
v_i\in H^0(X,T_X\otimes K).
\]
Set
\[
\beta_i:=\iota_{v_i}\sigma\in H^0(X,\Omega_X^1\otimes K).
\]

\begin{claim}\label{claim:beta-annihilates}
Each $\beta_i$ annihilates $\cF$. Hence $\beta_i\in H^0(X,\cE\otimes K)$.
\end{claim}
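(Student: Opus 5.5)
The claim is essentially pointwise linear algebra: $\cF$ is the kernel of $\sigma$, and $\beta_i$ is obtained by contracting $\sigma$ with a vector field. I will verify the annihilation on the open set $U$ and then extend it across $X\setminus U$ using saturation and reflexivity.

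\emph{Step 1: the computation on $U$.} Fix a local section $\xi$ of $\cF_U=\ker(\sigma:T_U\to\Omega_U^1)$. Write $v_i$ locally as $v_i=w\otimes e$, with $w$ a local vector field and $e$ a local frame of $K$. Then
\[
\iota_\xi\beta_i=\sigma(v_i,\xi)=\sigma(w,\xi)\otimes e=-\sigma(\xi,w)\otimes e=-(\iota_\xi\sigma)(w)\otimes e=0,
\]
since $\iota_\xi\sigma=0$. Hence on $U$ the section $\beta_i$ lies in the kernel of $\Omega_U^1\otimes K\to\cF_U^\vee\otimes K$. On $U$ the sheaves $\cF_U$ and $\cE_U$ are subbundles, and there we have the exact sequence $0\to\cE_U\to\Omega^1_U\to\cF_U^\vee\to 0$. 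Therefore $\beta_i|_U\in H^0(U,\cE_U\otimes K)$.

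\emph{Step 2: annihilation of $\cF$ on all of $X$.} The pairing $\beta_i|_{\cF}$ is a section of $\cF^\vee\otimes K$, which is torsion-free. By Step 1 it vanishes on the dense open set $U$, so it vanishes identically. Thus $\beta_i$ annihilates $\cF$. By \eqref{eq:E-ann-F}, this says that $\beta_i$ lies in $\Ann(\cF)\otimes K=\cE\otimes K$.

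\emph{Step 3: an alternative route to the same membership.} One can also argue with the quotient $\Omega_X^1/\cE$, which is torsion-free because $\cE$ is saturated. The image of $\beta_i$ in $(\Omega_X^1/\cE)\otimes K$ is zero on $U$ by Step 1. Since this sheaf is torsion-free, a section vanishing on a dense open set vanishes everywhere, so $\beta_i\in H^0(X,\cE\otimes K)$. Equivalently, since $\cE\otimes K$ is reflexive and $\operatorname{codim}(X\setminus U)\ge 3$, we have $H^0(X,\cE\otimes K)=H^0(U,\cE_U\otimes K)$, and the two extensions of $\beta_i|_U$ agree as sections of $\Omega_X^1\otimes K$.

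The only point needing care is identifying $\Ann(\cF)$ with the kernel of $\Omega^1_X\to\cF^\vee$ globally, and not just on $U$. This is handled by the saturation and torsion-freeness argument of Steps 2 and 3. The computation itself is immediate from the antisymmetry of $\sigma$.
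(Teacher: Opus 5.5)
Your proof is correct and follows the paper's argument: the same skew-symmetry computation $\sigma(w_i,\xi)=-\sigma(\xi,w_i)=0$ for local sections $\xi$ of $\cF_U$, followed by extension from $U$ to $X$ using that $\cE\otimes K$ is reflexive and $X\setminus U$ has codimension at least two. Your Steps~2--3 make explicit a point the paper leaves implicit in ``it is enough to check this on $U$'': the torsion-freeness of $\cF^\vee\otimes K$, or of $\Omega^1_X/\cE$.
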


\begin{proof}
It is enough to check this on $U$. Choose a local trivialization $\Omega$ of $K$ and write
\[
v_i=w_i\otimes \Omega
\]
for a local vector field $w_i$. For any local vector field $v$ on $U$,
\[
\beta_i(v)=\sigma(w_i,v)\Omega.
\]
If $v$ is a local section of $\cF_U$, then $\iota_v\sigma=0$, so $\sigma(v,u)=0$ for every local vector field $u$. Taking $u=w_i$ and using skew-symmetry gives
\[
\sigma(w_i,v)=-\sigma(v,w_i)=0.
\]
Thus $\beta_i$ annihilates $\cF_U$. Therefore $\beta_i$ lies in $H^0(U,\cE_U\otimes K)$. Since $\cE\otimes K$ is reflexive and $X\setminus U$ has codimension at least two,
\[
H^0(U,\cE_U\otimes K)=H^0(X,\cE\otimes K).
\]
\end{proof}

We now distinguish three cases.

\medskip
\noindent\textbf{Case 1: $\beta_1$ and $\beta_2$ are generically independent.}
Then their wedge is nonzero:
\[
0\neq \beta_1\wedge \beta_2\in H^0(X,\det\cE\otimes K^2).
\]
Using \eqref{eq:detE}, this gives a nonzero section of $K^2$, and Lemma~\ref{lem:line-sections} gives that $K$ is torsion, a contradiction.

\medskip
\noindent\textbf{Case 2: $\beta_1=\beta_2=0$.}
Then $v_1$ and $v_2$ lie in $H^0(X,\cF\otimes K)$. Therefore
\[
v_1\wedge v_2\in H^0(X,\det\cF\otimes K^2)
\simeq H^0(X,K)
\]
by \eqref{eq:detF}. This section is nonzero. Indeed, if $v_1\wedge v_2\equiv 0$, then $v_1$ and $v_2$ are generically proportional. The proportionality ratio is a meromorphic function on $X$, hence constant by Lemma~\ref{lem:no-meromorphic}; this would make $\eta_1$ and $\eta_2$ linearly dependent. Thus $H^0(X,K)\neq 0$, and Lemma~\ref{lem:line-sections} again gives that $K$ is torsion, a contradiction.

\medskip
\noindent\textbf{Case 3: $\beta_1$ and $\beta_2$ span a rank-one subsystem.}
Since the first two cases lead to contradictions, this is the only possible case under the standing assumption that $K$ is not torsion.

\begin{claim}\label{claim:basis-change}
After replacing $\eta_1,\eta_2$ by a suitable basis of their span, we may assume
\[
\beta_1\neq 0,
\qquad
\beta_2=0.
\]
\end{claim}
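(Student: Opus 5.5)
The plan is to exploit the linearity of the assignment \(\eta\mapsto\beta\). The map
\[
\eta\longmapsto v_\eta\longmapsto \beta_\eta:=\iota_{v_\eta}\sigma,
\qquad
H^0(X,\Omega_X^3)\to H^0(X,T_X\otimes K)\to H^0(X,\cE\otimes K),
\]
is \(\mathbb C\)-linear, because the isomorphism \(\Omega_X^3\simeq T_X\otimes K\) and contraction with \(\sigma\) are both linear. We are in Case~3, so \(\beta_1,\beta_2\) are not both zero and \(\beta_1\wedge\beta_2=0\) as a section of \(\det\cE\otimes K^2\). It therefore suffices to find constants \((a,b)\neq(0,0)\) with \(a\beta_1+b\beta_2=0\). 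Once we have them, we pick a complementary vector in the span to obtain the new basis.

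\textbf{Step 1: reduce to constant proportionality.} Suppose, after relabeling, that \(\beta_1\neq0\). Since \(\cE\otimes K\) is torsion-free and \(X\) is irreducible, \(\beta_1\) is nonzero at the generic point, so it is nonzero on a dense Zariski-open set \(V\subset U\). On \(V\), the vanishing of \(\beta_1\wedge\beta_2\) in the rank-two bundle \(\cE_U\otimes K\) means that \(\beta_2=f\beta_1\) for a unique function \(f\). This \(f\) is holomorphic on \(V\): locally it is a ratio of components of \(\beta_2\) and \(\beta_1\) in a frame, and the ratio is independent of which component is used.

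\textbf{Step 2: show \(f\) is constant.} The function \(f\) extends meromorphically to \(X\), being the quotient of component functions of two holomorphic sections. Concretely, on a local frame of \(\cE\otimes K\) over \(U\), \(f=\beta_2^{(j)}/\beta_1^{(j)}\) for any component with \(\beta_1^{(j)}\not\equiv0\), and these local ratios glue. Across \(X\setminus U\), which has codimension \(\geq3\), we use Levi's extension theorem for meromorphic functions across analytic sets of codimension \(\geq2\). By Lemma~\ref{lem:no-meromorphic}, \(f\equiv c\) is constant. Then \(\beta_2-c\beta_1\) vanishes on \(V\), and since \(\cE\otimes K\) is torsion-free it vanishes identically. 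Replacing \(\eta_2\) by \(\eta_2-c\eta_1\) keeps a basis of the span, and linearity gives \(\beta_{\eta_2-c\eta_1}=0\) while \(\beta_1\neq0\).

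\textbf{Main obstacle.} The only delicate point is the meromorphic extension of \(f\) across \(X\setminus U\) together with the well-definedness of the ratio. Working instead with the reflexive sheaf and the open set \(U\), whose complement has codimension \(\geq 3\), handles both. Alternatively, one can argue directly as in Case~2 of the paper, where the proportionality ratio was already treated as a meromorphic function on \(X\). I would follow that same justification, citing Lemma~\ref{lem:no-meromorphic}.
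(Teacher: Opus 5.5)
Your proposal is correct and follows essentially the same route as the paper. Both arguments use linearity of \(\eta\mapsto\beta_\eta\), generic proportionality in Case~3, the ratio as a meromorphic function that Lemma~\ref{lem:no-meromorphic} forces to be constant, torsion-freeness of \(\cE\otimes K\) to get \(\beta_2=c\beta_1\) everywhere, and the replacement \(\eta_2\mapsto\eta_2-c\eta_1\). The only difference is that you justify why the local ratios glue and extend meromorphically across \(X\setminus U\) (via Levi's extension theorem), a point the paper asserts without proof.
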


\begin{proof}
The assignment
\[
\eta_i\longmapsto v_i\longmapsto \beta_i=\iota_{v_i}\sigma
\]
is linear. Since $\beta_1,\beta_2$ are not both zero, assume after interchanging indices that $\beta_1\neq 0$. If $\beta_2=0$, there is nothing to prove. Otherwise $\beta_1$ and $\beta_2$ are nonzero and generically proportional. Their local proportionality quotients glue to a meromorphic function on $X$. By Lemma~\ref{lem:no-meromorphic}, this meromorphic function is constant. Thus $\beta_2=c\beta_1$ on a dense open set for some $c\in\mathbb C$, and hence everywhere because $\cE\otimes K$ is torsion-free. Replacing $\eta_2$ by $\eta_2-c\eta_1$ gives the claim.
\end{proof}

The section
\[
0\neq \beta_1\in H^0(X,\cE\otimes K)
\]
is a nonzero morphism
\[
K^{-1}\to \cE.
\]
Using Lemma~\ref{lem:rank-one-quotients} and \eqref{eq:detE}, we obtain an exact sequence
\begin{equation}\label{eq:initial-extension}
0\to K^{-1}\to \cE\to I_Z\otimes K\to 0
\end{equation}
for some closed subscheme $Z\subset X$ whose support has codimension at least $3$. Its extension class lies in
\begin{equation}\label{eq:initial-ext-group}
\Ext^1_X(I_Z\otimes K,K^{-1})\simeq \Ext^1_X(I_Z,K^{-2})\simeq H^1(X,K^{-2}),
\end{equation}
where the last isomorphism is Lemma~\ref{lem:ext}.

If the class in \eqref{eq:initial-ext-group} were zero, then \eqref{eq:initial-extension} would split over $X\setminus |Z|$. Hence over $X\setminus |Z|$ there would be a nonzero inclusion
\[
K\hookrightarrow \cE\subset \Omega_X^1,
\]
or equivalently a nonzero section
\[
0\neq \omega\in H^0(X\setminus |Z|,\Omega_X^1\otimes K^{-1}).
\]
By Hartogs extension,
\[
0\neq \omega\in H^0(X,\Omega_X^1\otimes K^{-1}).
\]
Proposition~\ref{prop:foliation} gives that $K$ is torsion, a contradiction. Therefore the class in \eqref{eq:initial-ext-group} is nonzero. We have constructed
\begin{equation}\label{eq:xi2}
0\neq \xi_2\in H^1(X,K^{-2}).
\end{equation}
Moreover, from $\beta_2=0$ we retain the fixed nonzero section
\begin{equation}\label{eq:fixed-v2}
0\neq v_2\in H^0(X,\cF\otimes K).
\end{equation}
Let $\mathcal M\subset \mathbb Z_{>0}$ be the smallest subset satisfying
\[
2\in \mathcal M,
\qquad
m\in\mathcal M\Longrightarrow 2m\in\mathcal M\text{ and }2m+1\in\mathcal M.
\]
We now begin the recursion. Suppose that, for some $m\in\mathcal M$, we have constructed
\begin{equation}\label{eq:xi-m}
0\neq \xi_m\in H^1(X,K^{-m}).
\end{equation}
By Serre duality on the smooth fourfold $X$,
\[
H^1(X,K^{-m})^\vee\simeq H^3(X,K^{m+1}).
\]
Thus
\begin{equation}\label{eq:h3-nonzero}
H^3(X,K^{m+1})\neq 0.
\end{equation}
By Lemma~\ref{lem:dps}, choose
\begin{equation}\label{eq:alpha-m}
0\neq \alpha_m\in H^0(X,\Omega_X^1\otimes K^m).
\end{equation}
We view $\alpha_m$ as a morphism
\[
\alpha_m:T_X\to K^m.
\]

\medskip
\noindent\textbf{Recursive Case A: $\cF\subseteq \ker(\alpha_m)$.}
Then, by \eqref{eq:E-ann-F}, $\alpha_m$ is a nonzero section of $\cE\otimes K^m$, hence gives a nonzero morphism
\[
K^{-m}\to \cE.
\]
Using Lemma~\ref{lem:rank-one-quotients} and \eqref{eq:detE}, we obtain an exact sequence
\begin{equation}\label{eq:caseA-extension}
0\to K^{-m}\to \cE\to I_Z\otimes K^m\to 0
\end{equation}
for some closed subscheme $Z\subset X$ whose support has codimension at least $3$. Its extension class lies in
\begin{equation}\label{eq:caseA-ext-group}
\Ext^1_X(I_Z\otimes K^m,K^{-m})\simeq \Ext^1_X(I_Z,K^{-2m})\simeq H^1(X,K^{-2m}).
\end{equation}
If this class were zero, then \eqref{eq:caseA-extension} would split over $X\setminus |Z|$, producing a nonzero section
\[
0\neq \omega_m\in H^0(X\setminus |Z|,\Omega_X^1\otimes K^{-m}).
\]
Hartogs extension gives
\[
0\neq \omega_m\in H^0(X,\Omega_X^1\otimes K^{-m}),
\]
and Proposition~\ref{prop:foliation} gives that $K$ is torsion, a contradiction. Hence the class in \eqref{eq:caseA-ext-group} is nonzero. Define
\begin{equation}\label{eq:xi-2m}
0\neq \xi_{2m}\in H^1(X,K^{-2m}).
\end{equation}
Since $2m\in\mathcal M$, the recursion continues.

\medskip
\noindent\textbf{Recursive Case B: $\cF\not\subseteq \ker(\alpha_m)$.}
Then the restriction
\[
\alpha_m|_{\cF}:\cF\to K^m
\]
is nonzero and generically surjective. Using Lemma~\ref{lem:rank-one-quotients} and \eqref{eq:detF}, we obtain an exact sequence
\begin{equation}\label{eq:caseB-extension}
0\to K^{-m-1}\to \cF\to I_Z\otimes K^m\to 0
\end{equation}
for some closed subscheme $Z\subset X$ whose support has codimension at least $3$. Its extension class lies in
\begin{equation}\label{eq:caseB-ext-group}
\Ext^1_X(I_Z\otimes K^m,K^{-m-1})\simeq \Ext^1_X(I_Z,K^{-2m-1})\simeq H^1(X,K^{-2m-1}).
\end{equation}
If this class is nonzero, define
\begin{equation}\label{eq:xi-2m-plus1}
0\neq \xi_{2m+1}\in H^1(X,K^{-2m-1}).
\end{equation}
Since $2m+1\in\mathcal M$, the recursion continues. It remains to exclude the possibility that the class in \eqref{eq:caseB-ext-group} is zero. If it were zero, then \eqref{eq:caseB-extension} would split over $X\setminus |Z|$:
\[
\cF|_{X\setminus |Z|}\simeq K^{-m-1}|_{X\setminus |Z|}\oplus K^m|_{X\setminus |Z|}.
\]
After tensoring by $K$,
\[
(\cF\otimes K)|_{X\setminus |Z|}\simeq K^{-m}|_{X\setminus |Z|}\oplus K^{m+1}|_{X\setminus |Z|}.
\]
Restrict the fixed nonzero section \eqref{eq:fixed-v2}:
\[
0\neq v_2|_{X\setminus |Z|}\in H^0(X\setminus |Z|,\cF\otimes K).
\]
Under the splitting, write
\[
v_2|_{X\setminus |Z|}=(s_-,s_+),
\]
where
\[
s_-\in H^0(X\setminus |Z|,K^{-m}),
\qquad
s_+\in H^0(X\setminus |Z|,K^{m+1}).
\]
At least one of $s_-$ and $s_+$ is nonzero. By Hartogs extension, we obtain a nonzero global section of either $K^{-m}$ or $K^{m+1}$. Lemma~\ref{lem:line-sections} gives that $K$ is torsion, a contradiction. Therefore the class in \eqref{eq:caseB-ext-group} is nonzero, and \eqref{eq:xi-2m-plus1} holds. Thus, starting from \eqref{eq:xi2}, the recursion never terminates. We obtain a sequence
\[
0\neq \xi_{m_j}\in H^1(X,K^{-m_j})
\]
with
\[
m_0=2,
\qquad
m_{j+1}\in\{2m_j,2m_j+1\}.
\]
In particular $m_j\to \infty$. By Serre duality,
\[
H^3(X,K^{m_j+1})\neq 0
\]
for every $j$. This is a contradiction to the assumption that $X$ has no codimension one subvarieties, due to Lemma \ref{lem:dps},\ref{lem:line-sections} and \cite[Proposition 2.6]{AH22}. The contradiction shows that $K$ cannot be non-torsion. Hence $K_X$ is torsion.
\end{proof}
\subsection{ Irregularity.}
We first prove the following standard lemmas.
\begin{lemma}
\label{lem:finite-etale-trivialization-higher-genus}
Let
\[
    f:Y\longrightarrow B
\]
be a smooth proper holomorphic submersion from a connected compact complex
manifold to a connected complex manifold. Assume that every fibre of \(f\) is
biholomorphic to a fixed smooth curve \(C\) of genus \(g(C)\geq 2\). Then there
exists a finite \'{e}tale cover
\[
    B'\longrightarrow B
\]
such that the pulled-back family is holomorphically trivial:
\[
    Y' := Y\times_B B' \simeq C\times B'
\]
over \(B'\).
\end{lemma}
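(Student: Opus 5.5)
The strategy is to reduce to the finiteness of the automorphism group of \(C\) and realize the family as a bundle with finite structure group. First, since \(f\) is a proper holomorphic submersion all of whose fibres are biholomorphic to \(C\), the Fischer--Grauert theorem shows that \(f\) is a locally trivial holomorphic fibre bundle with typical fibre \(C\). The structure group is \(\operatorname{Aut}(C)\), acting on \(C\) in the standard way.

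Next, because \(g(C)\geq 2\), we have \(H^0(C,T_C)=0\), so \(\operatorname{Aut}(C)\) is a discrete complex Lie group. By Hurwitz's theorem it is finite, of order at most \(84(g-1)\). Hence the transition functions \(U_{ij}\to\operatorname{Aut}(C)\) are holomorphic maps into a finite set, so they are locally constant. The bundle is therefore given by a cocycle in \(H^1(B,\underline{G})\) with \(G:=\operatorname{Aut}(C)\) finite and discrete.

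Such a cocycle is the same as a homomorphism \(\rho:\pi_1(B)\to G\), up to conjugation. Equivalently, it is a principal \(G\)-bundle \(P\to B\), which is a finite \'etale cover. Then \(Y\simeq P\times_G C\). Take \(B'\) to be a connected component of \(P\); concretely, \(B'\to B\) is the connected cover associated with \(\ker\rho\), which has finite index in \(\pi_1(B)\). It is a finite \'etale cover of degree \(|\operatorname{im}\rho|\leq|G|\). The pullback of \(P\) to \(B'\) acquires a section, because \(P\times_B P\to P\) has the diagonal section. Hence the pulled-back \(G\)-bundle is trivial, and so \(Y\times_B B'\simeq B'\times C\) over \(B'\).

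The only step needing care is the first one: that fibrewise isomorphism gives holomorphic local triviality. I would cite Fischer--Grauert for this. Alternatively, one could argue through the Kodaira--Spencer map into \(H^1(C,T_C)\), which is zero since all fibres are isomorphic, and then use the discreteness of the isomorphism sets \(\operatorname{Isom}(Y_b,C)\) to see that the trivializations vary holomorphically. Compactness of \(Y\) is not essential here beyond properness of \(f\). If \(B\) is not assumed compact, the cover is still finite because \(G\) is finite.
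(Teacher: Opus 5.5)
Your proposal is correct and follows essentially the same route as the paper: Fischer--Grauert local triviality, finiteness of \(\operatorname{Aut}(C)\) forcing locally constant transition functions, and passage to the finite \'etale cover attached to \(\ker\rho\). Your description via the associated principal \(G\)-bundle \(P\), with \(Y\simeq P\times_G C\) and the diagonal section, spells out the paper's final step, where trivial monodromy lets one trivialize the pulled-back family globally.
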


\begin{proof}
By the Fischer--Grauert theorem, since all fibres of \(f\) are biholomorphic
to \(C\), the map \(f\) is locally analytically trivial. Thus, after choosing
an analytic open cover \(\{U_i\}\) of \(B\), we may write
\[
    f^{-1}(U_i)\simeq C\times U_i .
\]
On overlaps \(U_i\cap U_j\), the transition functions are holomorphic maps
\[
    U_i\cap U_j \longrightarrow \operatorname{Aut}(C).
\]
Since \(g(C)\geq 2\), the group \(\operatorname{Aut}(C)\) is finite. Hence it
is discrete, and the transition functions are locally constant. Equivalently,
the fibration is defined by a finite monodromy representation
\[
    \rho:\pi_1(B)\longrightarrow \operatorname{Aut}(C).
\]
Let \(B'\to B\) be the finite covering corresponding to the finite-index
subgroup \(\ker(\rho)\subset \pi_1(B)\). Since \(B\) is a complex manifold,
\(B'\) carries a unique complex structure for which \(B'\to B\) is finite and
\'etale. After pulling back to \(B'\), the monodromy representation is trivial.
Therefore the locally constant transition functions can be killed by changing
the local trivialisations, and the pulled-back family is globally trivial:
\[
    Y\times_B B' \simeq C\times B'.
\]
This proves the lemma.
\end{proof}
\begin{lemma}\label{lem:elliptic-isotrivial-principal-after-etale}
Let
\[
    f:Y\longrightarrow B
\]
be a smooth proper holomorphic submersion from a connected compact complex
manifold to a connected complex manifold. Assume that every fibre of \(f\) is
isomorphic to a fixed elliptic curve \(E\). Then there exists a finite \'etale
cover
\[
    B'\longrightarrow B
\]
such that the pulled-back fibration
\[
    f':Y':=Y\times_B B'\longrightarrow B'
\]
is a principal \(E\)-bundle.
\end{lemma}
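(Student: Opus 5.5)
Fischer--Grauert makes \(f\) locally analytically trivial with fibre \(E\), so \(f\) is a fibre bundle with structure group \(\operatorname{Aut}(E)\). The plan is to kill the non-translation part of the structure group by a finite étale base change, which leaves an \(E\)-torsor. Write
\[
\operatorname{Aut}(E)\simeq E\rtimes \operatorname{Aut}_0(E),
\]
where \(E\) acts by translations and \(\operatorname{Aut}_0(E)\) is the finite group (of order \(2,4\) or \(6\)) of automorphisms fixing the origin. Over a cover \(\{U_i\}\) of \(B\) with trivializations \(f^{-1}(U_i)\simeq E\times U_i\), the transition functions are holomorphic maps
\[
g_{ij}:U_i\cap U_j\to \operatorname{Aut}(E),
\]
acting fibrewise. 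Composing with the quotient \(\operatorname{Aut}(E)\to \operatorname{Aut}_0(E)\) (the linear part) gives locally constant maps, since the target is finite and discrete. These satisfy the cocycle condition, so they define a representation
\[
\rho:\pi_1(B)\to \operatorname{Aut}_0(E).
\]
The step needing care is that the linear part of a fibrewise automorphism \(x\mapsto a(b)x+t(b)\) is a holomorphic, hence locally constant, function of \(b\); this is immediate from the semidirect product structure and the discreteness of \(\operatorname{Aut}_0(E)\).

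Next, let \(B'\to B\) be the finite étale cover corresponding to the finite-index subgroup \(\ker\rho\), exactly as in Lemma~\ref{lem:finite-etale-trivialization-higher-genus}. After pulling back, the linear cocycle becomes trivial in \(\check H^1(B',\operatorname{Aut}_0(E))\), i.e. it is a coboundary \(a_{ij}=c_i c_j^{-1}\) with locally constant \(c_i\) (after refining the cover). Replacing each local trivialization by its composition with \(c_i^{-1}\) makes every transition function a translation \(x\mapsto x+t_{ij}(b)\) with \(t_{ij}\) holomorphic. The family \(Y'\to B'\) then carries a well-defined fibrewise action of \(E\) by translation, since translations commute with translations. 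This action is free and transitive on fibres and locally trivial, so \(Y'\) is a principal \(E\)-bundle, classified by \(H^1(B',\mathcal O_{B'}(E))\).

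I expect the main obstacle to be purely formal: choosing the origins consistently is exactly what the translation part absorbs, so no section of \(f\) is needed. No further input from the earlier results beyond the Fischer--Grauert theorem and finiteness of \(\operatorname{Aut}_0(E)\) should be required.
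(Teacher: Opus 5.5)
Your proposal is correct and follows essentially the same route as the paper. Both arguments use Fischer--Grauert local triviality, project the $\operatorname{Aut}(E)$-valued transition functions to the finite group $\operatorname{Aut}_0(E)$ to obtain a monodromy representation $\rho$, pass to the finite \'etale cover attached to $\ker\rho$, and re-choose trivializations so that all transition functions are translations. Your explicit coboundary step $a_{ij}=c_ic_j^{-1}$, and your remark that translations commute (so the fibrewise $E$-action is well defined), spell out what the paper leaves implicit.
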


\begin{proof}
By the Fischer--Grauert theorem, since all fibres of \(f\) are biholomorphic to
the fixed elliptic curve \(E\), the map \(f\) is locally analytically trivial.
Thus, after choosing an analytic open cover \(\{U_i\}\) of \(B\), we have local
trivialisations
\[
    f^{-1}(U_i)\simeq E\times U_i .
\]
On overlaps \(U_i\cap U_j\), the transition functions are holomorphic maps
\[
    U_i\cap U_j\longrightarrow \operatorname{Aut}(E).
\]
After choosing an origin on \(E\), we have an exact sequence
\[
    0\longrightarrow E\longrightarrow \operatorname{Aut}(E)
    \longrightarrow \operatorname{Aut}(E,0)\longrightarrow 0,
\]
where \(E\) acts by translations and \(\operatorname{Aut}(E,0)\) is the finite
group of automorphisms fixing the origin. Composing the transition functions with
\[
    \operatorname{Aut}(E)\longrightarrow \operatorname{Aut}(E,0),
\]
we obtain transition functions with values in the finite group
\(\operatorname{Aut}(E,0)\). Since \(\operatorname{Aut}(E,0)\) is finite and
discrete, these transition functions are locally constant. Equivalently, they
define a finite monodromy representation
\[
    \rho:\pi_1(B)\longrightarrow \operatorname{Aut}(E,0).
\]
Since \(\operatorname{Aut}(E,0)\) is finite, the subgroup
\[
    \ker(\rho)\subset \pi_1(B)
\]
has finite index. Let
\[
    B'\longrightarrow B
\]
be the finite topological covering corresponding to \(\ker(\rho)\). Since \(B\)
is a complex manifold, \(B'\) carries a unique complex structure for which
\[
    B'\longrightarrow B
\]
is finite and \'etale. After pulling back to \(B'\), the induced monodromy representation to \(\operatorname{Aut}(E,0)\) is trivial. Therefore, after changing the local trivialisations on \(B'\), all transition functions of the pulled-back fibration
take values in the translation subgroup
\[
    E\subset \operatorname{Aut}(E).
\]
Thus \(Y'\to B'\) is a holomorphic fibre bundle whose transition functions are
translations of \(E\). Equivalently, \(Y'\to B'\) is a principal \(E\)-bundle.
\end{proof}

\begin{lemma}\label{lem:q-geq-four-torus}
Let $Y$ be a compact K\"ahler fourfold with $q(Y)\geq 4$. Assume that $Y$ contains no divisors and no surfaces. Then $Y$ is a complex torus and therefore $q(Y)=4$.
\end{lemma}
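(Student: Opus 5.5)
We study the Albanese map \(\alpha:=\alpha_Y:Y\to \operatorname{Alb}(Y)\), where \(\operatorname{Alb}(Y)\) is a complex torus of dimension \(q(Y)\geq 4\). The plan is to show in turn that \(\alpha\) is surjective, then generically finite, then unramified, and finally an isomorphism.

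\textbf{Step 1: the image \(W:=\alpha(Y)\).} \(W\) is an irreducible analytic subvariety of \(\operatorname{Alb}(Y)\) which generates it as a group, so \(\dim W\geq 1\). We rule out \(\dim W\leq 3\) fibre by fibre.
\begin{itemize}
\item If \(\dim W=3\) or \(2\), the general fibres of \(Y\to W\) have dimension \(1\) or \(2\). Take a general hyperplane section (in the Kähler sense, e.g.\ a translate of a subtorus divisor of \(\operatorname{Alb}(Y)\)) or, more simply, the preimage of any proper positive-dimensional analytic subset of \(W\). For instance, \(W\) of dimension \(\geq 2\) contains curves: intersect it with translates of a codimension-one subtorus, or take the image of \(W\cap(W+a)\). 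Its preimage has codimension \(1\) or \(2\) in \(Y\), producing a divisor or a surface.
\item If \(\dim W=2\), a fibre itself is a surface, a contradiction. If \(\dim W=1\), a fibre is a divisor, a contradiction.
\end{itemize}
More cleanly, the fibres of \(\alpha:Y\to W\) have dimension \(4-\dim W\). For \(\dim W\in\{1,2\}\) they are divisors or surfaces, which is excluded. For \(\dim W=3\), the fibres are curves, and the preimage of any curve in \(W\) is a surface. Such curves exist since, for \(a\) generic, \(W\cap (W+a)\) is a nonempty proper subset of dimension \(\geq 2\) when \(\dim \operatorname{Alb}\leq 5\); in general one uses that \(W\) is of general type modulo its Ueno fibration. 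The subtle point here is producing subvarieties in \(W\) of the right dimension. I would first try the Ueno structure theorem: \(W\) is either of general type, hence has many subvarieties, or is fibred by tori, whose preimages are subvarieties of intermediate dimension. Hence \(\dim W=4\), so \(q(Y)\geq 4=\dim W\).

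\textbf{Step 2: generic finiteness and \(q=4\).} Now \(\alpha\) is generically finite onto \(W\). The positive-dimensional fibres lie over an analytic subset, and their union is a proper analytic subset of \(Y\) of dimension \(\leq 3\). By hypothesis it has dimension \(\leq 1\), so it consists of at most curves, i.e.\ the exceptional locus has codimension \(\geq 3\). Since a Kähler fourfold admitting a generically finite map to a torus of dimension \(q>4\) has image a proper subvariety of general-type character (Ueno), the same Ueno argument applied to \(W\subsetneq \operatorname{Alb}(Y)\) would produce divisors or surfaces in \(Y\). I would therefore obtain \(W=\operatorname{Alb}(Y)\) and \(q(Y)=4\).

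\textbf{Step 3: étaleness.} The ramification divisor of \(\alpha:Y\to \operatorname{Alb}(Y)\) is the zero divisor of the Jacobian section \(\det d\alpha\in H^0(Y,K_Y)\), which is nonzero. By Lemma~\ref{lem:line-sections} it is nowhere vanishing, so \(K_Y\simeq\mathcal O_Y\) and \(\alpha\) is a local biholomorphism. In particular \(\alpha\) has no positive-dimensional fibres, so it is finite, proper and étale, hence a topological covering. A finite étale cover of a torus is a torus, so \(Y\) is a torus. Since the Albanese map of a torus is an isomorphism, the covering has degree \(1\), and \(q(Y)=4\).

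The main obstacle is Step 1, and the surjectivity part of Step 2: converting a non-surjective Albanese image into an actual divisor or surface on \(Y\). This is where Ueno's structure theory of subvarieties of tori has to be invoked carefully.
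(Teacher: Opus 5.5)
Your Step~3 is the paper's argument: once $q(Y)=4$ and $\alpha$ is surjective, $\det d\alpha$ is a nonzero section of $K_Y$. Lemma~\ref{lem:line-sections} makes it nowhere vanishing, so $\alpha$ is a finite \'etale cover of a torus. The gap is the surjectivity of $\alpha$ (your Steps~1--2). You flag this as the main obstacle but do not establish it.

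The ad hoc constructions in Step~1 are unreliable. $\operatorname{Alb}(Y)$ may be a simple torus with no codimension-one subtorus. Also, $W\cap(W+a)$ is empty for generic $a$ as soon as $q>2\dim W$, because $W-W$ is then a proper analytic subset. More seriously, ``the same Ueno argument'' in Step~2 fails in one subcase. Suppose $\dim W=4<q$ and the Ueno stabilizer $B$ of $W$ is one-dimensional. Then $W$ is not of general type, and generic translates of $B$ pull back only to curves in $Y$, because $\alpha$ is generically finite. Curves are allowed by the hypotheses. So your dichotomy ``general type, hence many subvarieties, or fibred by tori whose preimages have intermediate dimension'' produces no divisor or surface here.

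The missing idea, which is what the paper uses, is to argue through the algebraic dimension instead of exhibiting subvarieties of $W$ by hand.
\begin{enumerate}[label=\textup{(\arabic*)}]
\item Since $Y$ has no divisors, Lemma~\ref{lem:no-meromorphic} gives $a(Y)=0$.
\item Suppose $W\subsetneq\operatorname{Alb}(Y)$. Then $W$ is not a translate of a subtorus, since it contains $0$ and generates $\operatorname{Alb}(Y)$.
\item Ueno's structure theorem then gives a fibration $W\to W/B$ whose base $W/B$ is of general type with $\dim W/B>0$. Such a base is Moishezon.
\item Pulling back its meromorphic functions along the surjection $Y\to W/B$ gives $a(Y)\geq \dim W/B>0$, a contradiction.
\end{enumerate}
This is Ueno's Lemma~13.1 \cite[Lemma~13.1]{Ueno1975}. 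It disposes of every case $\dim W<q$ at once, including your problematic subcase, and yields $q(Y)=\dim\operatorname{Alb}(Y)\leq 4$. Hence $q(Y)=4$, $\alpha$ is surjective and generically finite, and your Step~3 finishes the proof.
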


\begin{proof}
Since $Y$ contains no divisors, its algebraic dimension is zero. Hence, by the standard surjectivity theorem for the Albanese map of a compact K\"ahler manifold of algebraic dimension zero, the Albanese map
\[
\alpha_Y:Y\longrightarrow \operatorname{Alb}(Y)
\]
is surjective, \cite[Chapter 13, Lemma 13.1]{Ueno1975}. Since
\[
\dim Y=4
\]
and
\[
\dim \operatorname{Alb}(Y)=q(Y)\geq 4,
\]
surjectivity forces $q(Y)=4$. Thus $\alpha_Y$ is a surjective generically finite holomorphic map from the smooth fourfold $Y$ onto the four-dimensional complex torus $\operatorname{Alb}(Y)$. We show that $\alpha_Y$ is finite \'etale. Consider the differential
\[
d\alpha_Y:T_Y\longrightarrow b^*T_{\operatorname{Alb}(Y)}.
\]
Both vector bundles have rank $4$. Since $\alpha_Y$ is generically finite, the
determinant of $\alpha_Y$ is not identically zero. Its zero locus is the critical
locus of $\alpha_Y$. If this zero locus were nonempty, it would be a codimension one subvariety in $Y$. This is impossible by assumption. Therefore the critical locus is
empty. Thus $\alpha_Y$ is a local biholomorphism. In particular, its fibres are discrete.
Since $Y$ is compact, the fibres are finite. Hence $\alpha_Y$ is finite and
unramified, therefore finite \'etale. A finite \'etale cover of a complex torus is again a complex torus. Therefore $Y$ is a complex torus.
\end{proof}
\begin{lemma}\label{lem:q-three-impossible}
    Let $X$ be a compact K\"ahler fourfold. Assume that $K_X$ is pseudo-effective and that $X$ contains no analytic subvarieties of codimension $1$ or $2$. Then 
    \[ 
    q(X)\neq 3. 
    \]
\end{lemma}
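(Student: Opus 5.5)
We will argue by contradiction: assume \(q(X)=3\) and study the Albanese map \(\alpha_X:X\to \operatorname{Alb}(X)=:A\), a three-dimensional torus. As in Lemma~\ref{lem:q-geq-four-torus}, \(a(X)=0\) because \(X\) has no divisors, so \(\alpha_X\) is surjective by \cite[Lemma 13.1]{Ueno1975}. Take the Stein factorization \(X\xrightarrow{f} B\xrightarrow{\pi} A\).

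\textbf{Step 1: the base is a torus.} If \(\pi\) were ramified, purity of the branch locus for finite maps onto the smooth space \(A\) would give a branch divisor in \(A\). Its preimage would be a divisor in \(X\), which is excluded. So \(\pi\) is finite \'etale and \(B\) is a three-dimensional torus.

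\textbf{Step 2: control of \(B\) and of the fibres.} The preimage under \(f\) of a surface (resp. curve) in \(B\) is a divisor (resp. surface) in \(X\). Hence \(B\) has no positive-dimensional proper subvarieties. No fibre of \(f\) has dimension \(\geq 2\), so \(f\) is equidimensional with curve fibres, and therefore flat. The discriminant locus \(\Delta\subset B\) is a proper analytic subset, so it is finite. Thus \(f\) is a smooth family of connected curves of genus \(g\) over \(B^\circ:=B\setminus\Delta\), and \(\pi_1(B^\circ)=\pi_1(B)\) since \(\Delta\) has codimension \(3\).

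\textbf{Step 3: the case \(g=0\).} Here \(X\) is covered by rational curves, so \(X\) is uniruled, contradicting the pseudo-effectivity of \(K_X\).

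\textbf{Step 4: isotriviality when \(g\geq1\).} The moduli (respectively \(j\)-) map \(B^\circ\to \overline{M}_g\) extends across the finite set \(\Delta\) by a Hartogs/Borel extension argument, because \(\overline{M}_g\) is projective. Since \(a(B)=0\), the resulting map is constant, so the family is isotrivial.

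\textbf{Step 5: the case \(g\geq 2\).} Lemma~\ref{lem:finite-etale-trivialization-higher-genus}, applied over \(B^\circ\) with finite monodromy extended to \(\pi_1(B)\), gives a finite \'etale cover \(X'\to X\) that agrees with \(C\times B'\) over \(B'^\circ\). The closure of \(\{p\}\times B'^\circ\) is then a divisor in \(X'\), and its image is a divisor in \(X\), a contradiction.

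\textbf{Step 6: the case \(g=1\).} Lemma~\ref{lem:elliptic-isotrivial-principal-after-etale} gives a finite \'etale \(G\)-cover \(X'\to X\). The plan is to show that, after extending over the finitely many singular fibres, \(X'\) is a principal \(E\)-bundle over a torus \(B'\). Since \(X'\) is K\"ahler, Blanchard's theorem should make \(X'\) a four-dimensional torus, with \(X=X'/G\). Then \(q(X)=3<4=q(X')\) means some \(g\in G\) acts on \(H^0(\Omega^1)\) in the fibre direction \(E\) through a nontrivial linear part \(L_g\). I would use this \(g\) to produce a divisor: the locus where \(g\) acts fibrewise by an automorphism of \(E\) with fixed points should sweep out a multisection, namely a \(G\)-invariant translate of a codimension-one subtorus of the form \(\{p\}\times B'\) up to isogeny. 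Its image in \(X\) is a divisor, contradicting the hypothesis.

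\textbf{Main obstacle.} I expect Step 6 to be the hardest. The examples of elliptic fibrations without divisors in Section~\ref{ex:gen} show that the divisor must come specifically from the non-translation part of the \(G\)-action forced by \(q=3\). Making that fixed-point locus genuinely codimension one, rather than a curve, requires the linear algebra of \(L_g\) on \(V=T_0X'\). The extension over \(\Delta\) in Steps 4 and 6 is a secondary technical point.
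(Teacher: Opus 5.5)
Your Steps 1--5 are fine. They differ only mildly from the paper: you use the Stein factorization and purity of the branch locus, allow a finite discriminant, and get isotriviality from \(a(B)=0\) instead of Teichm\"uller space and Liouville. The genuine gap is Step 6, which is the real content of the lemma, since the examples of Section~\ref{ex:gen} show that an elliptic fibration over a simple \(3\)-torus gives no contradiction by itself. Step 6 is only a plan, and two things are missing.

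First, you never remove the singular fibres. So the principal \(E\)-bundle structure over all of \(B'\) is not established, and neither Blanchard's theorem nor Lemma~\ref{lem:q-geq-four-torus} can yet be applied. The missing observation is the paper's Claim~\ref{claim:smooth}. The critical locus of \(f\) is the degeneracy locus \(\{\operatorname{rk} df\le 2\}\) of \(df\colon T_X\to f^*T_B\), a map from a rank-\(4\) to a rank-\(3\) bundle. By the determinantal bound, each of its components has codimension at most \((4-2)(3-2)=2\). In your setting this locus lies over the finite set \(\Delta\), so it has dimension at most \(1\); hence it is empty and \(\Delta=\varnothing\). This also removes the extension issue in Step 4. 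Then \(X'\to B'\) is a principal \(E\)-bundle over a torus, Blanchard gives \(b_1(X')=8\), and \(X'=V/\Lambda\) is a torus.

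Second, the divisor cannot come from fixed points. \(G\) is the deck group of an \'etale cover, so it acts freely on \(X'\), and the locus you describe (fibres on which \(g\) acts by an automorphism with fixed points) is empty. What you need is the fixed subspace of the linear part. Since \(\dim H^0(X',\Omega^1_{X'})^G=q(X)=3<4\), some \(g\) has \(L_g\neq\mathrm{id}\). For such \(g\), the space \(V_g:=\ker(L_g-\mathrm{id})\) has complex dimension exactly \(3\), because \(L_g\) has finite order and is therefore diagonalizable. Because \(L_g\) preserves \(\Lambda\), \(V_g\) is defined over \(\mathbb Q\). Hence \(V_g\cap\Lambda\) is a lattice in \(V_g\), and \(V_g/(V_g\cap\Lambda)\) is a three-dimensional subtorus of \(X'\), i.e.\ a divisor. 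Its image under the finite map \(X'\to X\) is a divisor in \(X\), which gives the contradiction. No \(G\)-invariant translate or multisection argument is needed.
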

\begin{proof}
    Assume for a contradiction that $q(X)=3$. Let, 
    \[ 
    \alpha_X:X\longrightarrow A:=\operatorname{Alb}(X) 
    \] 
    be the Albanese map. Then $A$ is a three-dimensional complex torus.
    \begin{claim}\label{claim:smooth}
        The Albanese map is smooth with curve fibers.
    \end{claim}
    \begin{proof}
    Let $W:= \alpha_X(X)$. If $\dim W=1$, then a general fibre of $\alpha_X$ has dimension $3$, hence gives a codimension one subvariety in $X$, contradiction. If $\dim W=2$, then a general fiber has dimension $2$, hence gives a surface in $X$, again a contradiction. Therefore $\dim W=3$. Since $\dim A=3$, we get $W=A$. Thus $\alpha_X$ is surjective and its general fibre is a curve. The critical locus of $\alpha_X$ is the rank drop locus of the following differential,
    \[
    d\alpha_X: T_X \longrightarrow \alpha_X^*T_A.
    \]
    The rank drop locus of $d\alpha_X$ has codimension at most $2$ \cite[Chapter 14, Section 4]{FultonIntersectionTheory}. Therefore, if it is nonempty, we have a contradiction to the assumptions of the theorem.
    \end{proof}
    Now we prove that the fibers of the map are not rational curves and then argue using case analysis of the genus of the curves.
    \begin{claim}\label{claim:notrational}
        The fibers of $\alpha_X$ are not rational curves.
    \end{claim}
    \begin{proof}
        Due to Claim \ref{claim:smooth} fibers are smooth. Let $F$ be a fiber of $\alpha_X$. Since $A$ is a torus, $K_A\simeq \mathcal O_A$. Thus, by adjunction,
        \[
        K_X|_F \simeq K_F.
        \]
        The fibers of $\alpha_X$ form a covering family of $X$, since $K_X$ is pseudo--effective it has non negative degree with movable curve class; see \cite[Proposition 1.4(iii)]{BDPP}. Hence
        \[
        \operatorname{deg}(K_F)= K_X \cdot F \geq 0.
        \]
        Therefore the genus of the fiber $g(F) \geq 1$.
        \end{proof}
        \begin{claim}
            $\alpha_X$ is isotrivial.
        \end{claim}
        \begin{proof}
             We give the standard argument for completeness. Pull the family back to the universal cover
             \[
             \mathbb C^3\longrightarrow A.
             \]
             Since $\mathbb C^3$ is simply connected, the pulled-back family admits a holomorphic classifying map to Teichm\"uller space
             \[
             \mathbb C^3\longrightarrow \mathcal T_g,
             \]
             where $g=g(F)$. Due to claim \ref{claim:notrational}, we have $g(F)\geq 1$. If $g=1$, then $\mathcal T_1\simeq \mathbb H$, which is biholomorphic to the unit disc. If $g\geq 2$, then by the Bers embedding theorem, $\mathcal T_g$ is realised as a bounded domain in a complex affine space; see \cite{Bers1960}. In either case, the classifying map is a bounded holomorphic map from $\mathbb C^3$ to a complex affine space. By Liouville's theorem, it is constant. Hence $\alpha_X$ is isotrivial.
        \end{proof}
        Now we split the proof into $2$ cases.
        \begin{description}
            \item[Case 1 (genus is strictly greater than one):]
            By Lemma \ref{lem:finite-etale-trivialization-higher-genus}, after a finite proper surjective base change
            \[
            A'\longrightarrow A
            \]
            the pulled-back family becomes a product
            \[
            X':=X\times_A A'\simeq C\times A',
            \]
            where \(C\) is a fixed smooth curve of genus \(g(F)\geq 2\). Let
            \[
            \pi:X'\longrightarrow X
            \]
            be the natural projection. Since \(A'\to A\) is finite, the map \(\pi\) is finite. Choose a point \(p\in C\). Then
            \[
            D':=\{p\}\times A'
            \]
            is a codimension-one analytic subset of \(X'\). Since \(\pi\) is finite, it cannot contract \(D'\). Hence, by Remmert's proper mapping theorem,
            \[
            \pi(D')\subset X
            \]
            is an analytic subset of dimension \(3\), hence a codimension-one analytic subvariety of the fourfold \(X\). This contradicts the assumption that \(X\) contains no analytic subvarieties of codimension \(1\). Therefore the case \(g(F)\geq 2\) is impossible.
            \item[Case 2 (genus is equal to one):] Then $\alpha_X:X\to A$ is a smooth isotrivial fibration by elliptic curves. By Fischer--Grauert \cite{FischerGrauert}, it is locally analytically trivial. After a finite \'etale base change killing the finite automorphism part of the monodromy, the pulled-back fibration
            \[
            \alpha_X':X':=X\times_A A'\longrightarrow A'
            \]
            is a principal bundle under a fixed elliptic curve $E$; due to Lemma \ref{lem:elliptic-isotrivial-principal-after-etale}. The manifold $X'$ is again compact K\"ahler. Since $X'$ is K\"ahler, Blanchard's theorem, in the form stated in \cite[Theorem 1.7]{Hofer1993} and \cite[Proposition 5.2]{Hofer1993}, implies that
            \[
            H^1(X',\mathbb C)\simeq H^1(A',\mathbb C)\oplus H^1(E,\mathbb C).
            \]
            Therefore,
            \[
            b_1(X')=b_1(A')+b_1(E)=6+2=8.
            \]
            Since $X'$ is compact K\"ahler, we get
            \[
            q(X')=\frac{1}{2}b_1(X')=4.
            \]
            $X'$ is a complex torus due to Lemma \ref{lem:q-geq-four-torus}. Replacing $X'\to X$ by a finite Galois \'etale cover, we obtain a finite Galois \'etale cover
            \[
            T\longrightarrow X
            \]
            where $T$ is a complex torus. Let $G$ be the Galois group, so that $X=T/G$. Then
            \[
            H^0(X,\Omega_X^1)=H^0(T,\Omega_T^1)^G.
            \]
            Since $q(X)=3$ and $\dim T=4$, the invariant subspace $H^0(T,\Omega_T^1)^G$
            has dimension $3$. If every element of $G$ acted trivially on $H^0(T,\Omega_T^1)$, then $q(X)=4$, contradiction. Hence there exists $g\in G$ whose linear part is nontrivial. Since every element of $G$ fixes the three-dimensional invariant subspace, the induced action of $g$ on $H^0(T,\Omega_T^1)$ fixes a hyperplane. Write $T=V/\Lambda$, where $V\simeq \mathbb C^4$ and $\Lambda$ is a lattice. Let $L_g:V\to V$ be the linear part of $g$. The fixed hyperplane on $H^0(T,\Omega_T^1)\simeq V^*$ is dual to the fixed subspace
            \[
            V_g:=\ker(L_g-\operatorname{id}_V)\subset V.
            \]
            This subspace has dimension $3$. Since $L_g$ preserves the lattice $\Lambda$ and has finite order, the intersection
            \[
            \Lambda_g:=\Lambda\cap V_g
            \]
            is a lattice in $V_g$. Hence
            \[
            D:=V_g/\Lambda_g
            \]
            is a three-dimensional subtorus of $T$. In particular, $D$ is a divisor in the four-dimensional torus $T$. The divisor $D$ need not be $G$-invariant. Therefore consider
            \[
            \mathcal D:=\bigcup_{h\in G}h(D).
            \]
            This is a finite union of divisors in $T$, hence a divisor, and it is $G$-invariant by construction. Therefore its image in the quotient $X=T/G$ is a divisor in $X$. This contradicts the assumption that $X$ has no divisors. This contradiction eliminates the elliptic case as well. Therefore the assumption $q(X)=3$ was impossible.
        \end{description}
\end{proof}

\begin{proof}[Proof of Lemma \ref{thm:0or4}:]
    Follows from, Lemma \ref{lem:q-geq-four-torus}, Lemma \ref{lem:q-three-impossible} and \cite[Chapter 13, Lemma 13.1]{Ueno1975}.
\end{proof}
Let us note some corollaries.
\begin{corollary}\label{cor:euler0}
    Let $X$ be a compact K\"ahler fourfold, with $K_X$ pseudo--effective. Assume that 
    \begin{itemize}
        \item $\chi(X,\mathcal{O}_X)\leq0$;
        \item $X$ has no codimension one subvarieties ;
        \item $X$ has no codimension two subvarieties.
    \end{itemize}
    Then \(X\) is either a quotient of a complex torus or an irreducible holomorphic symplectic manifold.
\end{corollary}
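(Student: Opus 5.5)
The plan is to reduce the corollary to Lemma~\ref{thm:main} and then invoke the Beauville--Bogomolov type classification \cite[Theorem 2.8]{Vikash2026ClassificationOP}. It therefore suffices to prove that \(K_X\) is torsion. Argue by contradiction: assume \(K_X\) is not torsion.

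\textbf{Step 1: vanishing of \(h^0(K_X)\) and \(q(X)\).} First, Lemma~\ref{lem:line-sections} gives
\[
h^4(X,\mathcal O_X)=h^0(X,K_X)=0,
\]
since any nonzero section of \(K_X\) would trivialize it. Second, by Lemma~\ref{thm:0or4} we have \(q(X)\in\{0,4\}\). If \(q(X)=4\), then Lemma~\ref{lem:q-geq-four-torus} shows that \(X\) is a complex torus. Then \(K_X\) is trivial, contradicting the assumption. Hence we may take \(q(X)=0\).

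\textbf{Step 2: a lower bound on \(h^0(\Omega_X^3)\).} Using Hodge symmetry \(h^i(\mathcal O_X)=h^0(\Omega_X^i)\) on the compact K\"ahler manifold \(X\), we get
\[
\chi(X,\mathcal O_X)=1-q(X)+h^0(X,\Omega_X^2)-h^0(X,\Omega_X^3)+h^0(X,K_X)=1+h^0(X,\Omega_X^2)-h^0(X,\Omega_X^3).
\]
Since \(X\) has no divisors, it is not projective. By \cite[Chapter 7, Exercise 1]{Voisin_2002} and \cite{KodairaEmbedding} this gives \(h^0(X,\Omega_X^2)\ge 1\). The hypothesis \(\chi(X,\mathcal O_X)\le 0\) then yields
\[
h^0(X,\Omega_X^3)\ge 1+h^0(X,\Omega_X^2)\ge 2.
\]

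\textbf{Step 3: conclusion.} All hypotheses of Lemma~\ref{thm:main} now hold: no codimension \(1\) or \(2\) subvarieties, \(K_X\) pseudo-effective, and \(h^0(X,\Omega_X^3)\ge 2\). So \(K_X\) is torsion, contradicting our assumption. With \(K_X\) torsion, \cite[Theorem 2.8]{Vikash2026ClassificationOP} applies. It uses Beauville--Bogomolov and excludes Calabi--Yau and product factors, which would carry subvarieties, and gives the stated dichotomy between a torus quotient and an irreducible holomorphic symplectic manifold.

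The only point needing care is the bound \(h^{2,0}>0\) from non-projectivity. That bound requires knowing that a compact K\"ahler manifold with \(h^{2,0}=0\) is projective, via Kodaira's embedding theorem applied to a rational K\"ahler class. I take this from the cited references, exactly as in the proof of Lemma~\ref{thm:main}. Everything else is bookkeeping with earlier lemmas.
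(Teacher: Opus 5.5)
Your proposal is correct and follows the paper's proof essentially step for step: Lemma~\ref{thm:0or4} and Lemma~\ref{lem:q-geq-four-torus} reduce to $q(X)=0$, then Lemma~\ref{lem:line-sections} and non-projectivity turn $\chi(X,\mathcal O_X)\leq 0$ into $h^0(X,\Omega_X^3)\geq 2$, Lemma~\ref{thm:main} forces $K_X$ to be torsion, and \cite[Theorem 2.8]{Vikash2026ClassificationOP} finishes. The only difference is cosmetic: you assume $K_X$ is non-torsion from the start and treat $q(X)=4$ as a contradiction, whereas the paper first disposes of $q(X)=4$ as the torus case and only then argues by contradiction.
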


\begin{proof}
 By Lemma \ref{thm:0or4}, \(q(X)=0\) or \(q(X)=4\). If \(q(X)=4\), then Lemma \ref{lem:q-geq-four-torus} shows that \(X\) is a complex torus, and we are done. Assume \(q(X)=0\). We first prove that \(K_X\) is torsion. Suppose not. Then
Lemma \ref{lem:line-sections} gives
\[
H^0(X,K_X)=0,
\]
so \(h^{0,4}(X)=0\). Since \(X\) has no divisors, so
\(
h^{0,2}(X)>0;
\)
due to \cite[Chapter 7, Exercise 1]{Voisin_2002},\cite{KodairaEmbedding}. Using \(q(X)=h^{0,1}(X)=0\) and \(\chi(X,\mathcal O_X)=0\), we get
\[
0\geq\chi(X,\mathcal O_X)
  =1-h^{0,1}(X)+h^{0,2}(X)-h^{0,3}(X)+h^{0,4}(X)
  =1+h^{0,2}(X)-h^{0,3}(X).
\]
Hence
\[
h^{0,3}(X)\geq1+h^{0,2}(X)\ge2.
\]
Lemma \ref{thm:main} applies and gives that \(K_X\) is torsion, contradiction. Therefore
\(K_X\) is torsion. Finally, \cite[Theorem 2.8]{Vikash2026ClassificationOP} gives the required conclusion.
\end{proof}
\begin{corollary}\label{cor:trivial ideal}
     Let $X$ be compact K\"ahler fourfold with pseudo--effective canonical bundle, assume there is a singular Hermitian metric with trivial multiplier ideal. Then one of the following holds.
    \begin{itemize}
        \item $X$ admits a codimension $1$ subvariety.
        \item $X$ admits a codimension $2$ subvariety.
        \item $K_X$ is torsion.
    \end{itemize}
\end{corollary}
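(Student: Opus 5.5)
The plan is to assume that \(X\) has no codimension-one and no codimension-two subvarieties and show \(K_X\) is torsion. I will combine Theorem~\ref{thm:use} with the dichotomies proved above. Fix a singular Hermitian metric \(h\) on \(K_X\) with semipositive curvature and \(\mathcal I(h)=\mathcal O_X\). Theorem~\ref{thm:use} then leaves three possibilities: \(K_X\) is torsion, \(\chi(X,\mathcal O_X)=0\), or \(a(X)>0\). The first needs no further work.

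\textbf{The case \(a(X)>0\).} Here \(X\) carries a nonconstant meromorphic function. This contradicts Lemma~\ref{lem:no-meromorphic}, since \(X\) has no codimension-one subvarieties.

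\textbf{The case \(\chi(X,\mathcal O_X)=0\).} In particular \(\chi(X,\mathcal O_X)\le 0\), so Corollary~\ref{cor:euler0} applies. It gives that \(X\) is a torus quotient or an irreducible holomorphic symplectic manifold, and in both cases \(K_X\) is torsion. To make this step self-contained I would repeat its argument. By Lemma~\ref{thm:0or4}, either \(q(X)=4\) or \(q(X)=0\). If \(q(X)=4\), Lemma~\ref{lem:q-geq-four-torus} shows \(X\) is a torus, so \(K_X\simeq\mathcal O_X\). If \(q(X)=0\) and \(K_X\) is not torsion, then:
\begin{itemize}
\item Lemma~\ref{lem:line-sections} forces \(h^{0,4}=0\);
\item non-projectivity gives \(h^{0,2}\ge 1\);
\item the Euler characteristic identity \(0=\chi(X,\mathcal O_X)=1+h^{0,2}-h^{0,3}\) gives \(h^{0,3}\ge 2\).
\end{itemize}
Lemma~\ref{thm:main} then forces \(K_X\) torsion, a contradiction.

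\textbf{Main obstacle.} The only delicate point is that Theorem~\ref{thm:use} is quoted from \cite[Theorem 2.7.3]{DPS01} combined with \cite[Proposition~2.6]{AH22}. I need to confirm that its hypotheses need nothing beyond pseudo-effectivity and a metric with trivial multiplier ideal, so that it applies verbatim here.

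If a direct argument is preferred instead, the triviality of \(\mathcal I(h^{m})\) is the key input. With trivial multiplier ideals, Theorem~\ref{thm:DPS-HL} surjects onto all \(H^q(X,K_X^{m+1})\), not only onto \(H^3\). Suppose \(\chi(X,\mathcal O_X)\neq 0\). Then the Riemann--Roch polynomial \(\chi(X,K_X^{m})\) is nonzero for infinitely many \(m\). This yields some \(q\) with \(H^0(X,\Omega_X^{4-q}\otimes K_X^{m-1})\neq 0\) for infinitely many \(m\). By \cite[Proposition~2.6]{AH22} and the absence of divisors, this forces \(K_X\) torsion, exactly as in the proof of Lemma~\ref{lem:no-subvarieties-chi-or-torsion}. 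Here trivial multiplier ideals replace the finite-cosupport assumption used there.
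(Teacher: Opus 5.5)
Your proposal is correct and is essentially the paper's own proof: Theorem~\ref{thm:use} leaves only the case $\chi(X,\mathcal O_X)=0$, after which $h^{2,0}>0$, Lemma~\ref{thm:0or4}, Lemma~\ref{lem:q-geq-four-torus} and Lemma~\ref{thm:main} finish exactly as in Corollary~\ref{cor:euler0}; you also helpfully make explicit, via Lemma~\ref{lem:no-meromorphic}, the exclusion of $a(X)>0$ that the paper leaves implicit. One small caution concerns your optional direct argument: surjecting onto every $H^q(X,K_X^{m})$ needs $\mathcal I(h^{m-1})=\mathcal O_X$ for all $m$, which does not follow from $\mathcal I(h)=\mathcal O_X$ alone, so that variant tacitly uses a stronger hypothesis than the one you fixed.
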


\begin{proof}
    Suppose in a contradiction that none of the above holds. Then Lemma \ref{thm:use} implies $\chi(X,\mathcal{O}_X)=0$. Since $X$ is K\"ahler and has no codimension one subvarieties, this implies $H^{2,0}(X)\neq 0$, indeed, else it would be projective (\cite[Chapter 7, Exercise 1]{Voisin_2002},\cite{KodairaEmbedding}). Then Lemma \ref{thm:0or4}, $\chi(X,\mathcal{O}_X)=0$, Lemma \ref{lem:q-geq-four-torus} and Lemma \ref{thm:main} will give a contradiction. Hence the corollary follows.
\end{proof}

\subsection{Examples} \label{ex:gen}
The notion of poor manifolds was introduced by Bandman and Zarhin \cite{bandman2023simple}, and was studied in \cite{bandman2023simple,Vikash2026ClassificationOP}. Explicit examples of tori without subvarieties were also constructed in \cite{bandman2023simple}. Examples of Irreducible holomorphic symplectic manifolds without subvarieties were constructed in \cite{Verbitsky1998}. The purpose of this section is to construct examples of four dimensional K\"ahler manifolds without surfaces and codimension one subvarieties inside it and has Euler characteristic zero. This confirms that the assumptions in corollary \ref{cor:euler0} and Lemma \ref{thm:main} are not superficial. 
\begin{definition}
    A positive-dimensional complex torus \(X\) is called simple if
\[
    \{0\}
    \quad\text{and}\quad
    X
\]
are the only complex subtori of \(X\).
\end{definition}

\begin{example}\label{ex:elliptic-fibration-over-special-torus}
We construct a compact K\"ahler fourfold \(X\) admitting a smooth elliptic
fibration
\begin{equation}\label{eq:ex-elliptic-fibration}
    f:X\longrightarrow B
\end{equation}
such that \(X\) has no divisors and no surfaces, while
\begin{equation}\label{eq:ex-h03}
    h^0(X,\Omega_X^3)=4.
\end{equation}
The point of the construction is to start with a three-dimensional torus \(B\)
which has no positive-dimensional proper analytic subsets, then build an
elliptic torus extension of \(B\). The total space will contain curves, namely
the elliptic fibres of \(f\), but we choose the complex structure generically
so that the total space has no divisors. The absence of surfaces will then
follow from the fact that the base \(B\) has no curves or surfaces. By Bandman--Zarhin
\cite[Definition~1.2, Theorem~1.3, Proposition~1.7]{bandman2023simple}, there
exists a three-dimensional complex torus
\begin{equation}\label{eq:ex-base-torus}
    B=(W,J_B)/\Gamma
\end{equation}
which is simple, has algebraic dimension \(0\), and satisfies
\begin{equation}\label{eq:ex-base-properties}
    \operatorname{NS}(B)=0.
\end{equation}

\begin{claim}\label{claim:B-no-positive-dimensional-subsets}
The torus \(B\) has no positive-dimensional proper analytic subsets.
\end{claim}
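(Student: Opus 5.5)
We argue by contradiction and show that any positive-dimensional proper analytic subset of \(B\) would force a nonzero class in \(\operatorname{NS}(B)\) or a nontrivial subtorus, contradicting \eqref{eq:ex-base-properties} or simplicity. Since \(\dim B=3\), a proper positive-dimensional irreducible analytic subset \(Y\subset B\) has dimension \(1\) or \(2\). We treat the two cases separately, after first reducing to the irreducible case by taking an irreducible component of maximal dimension.

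\emph{Case \(\dim Y=2\).} Then \(Y\) is an effective divisor on \(B\), so \(\mathcal O_B(Y)\) is a line bundle with a nonzero section. Its first Chern class \(c_1(\mathcal O_B(Y))\) lies in \(\operatorname{NS}(B)=0\), hence is zero. We then pass to cohomology: for a Kähler form \(\omega\) on \(B\),
\[
\int_Y \omega^2 = c_1(\mathcal O_B(Y))\cdot[\omega]^2=0,
\]
while the left side is strictly positive because \(\omega^2\) restricts to a volume form on the smooth locus of the surface \(Y\). This contradiction excludes divisors. Alternatively, one can note that an effective divisor with nonzero section would give \(a(B)>0\) via the standard theory of line bundles on tori; the numerical argument is cleaner and uses only \eqref{eq:ex-base-properties}.

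\emph{Case \(\dim Y=1\).} This is the main obstacle, since curves are not directly seen by \(\operatorname{NS}(B)\). Let \(\nu:\widetilde Y\to Y\) be the normalization, a compact Riemann surface, and consider the induced map \(\widetilde Y\to B\). By the universal property of the Albanese (Jacobian) of \(\widetilde Y\), this map factors, after translation, through a homomorphism \(J(\widetilde Y)\to B\). The image is a complex subtorus \(B'\subset B\) containing a translate of \(Y\), hence is positive-dimensional. Simplicity of \(B\) forces \(B'=B\). But \(J(\widetilde Y)\) is an abelian variety, so its image \(B\) is a quotient of a projective torus, hence projective. This contradicts \(a(B)=0\), since a projective torus of dimension \(3\) has algebraic dimension \(3\). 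The step to double check is the factorization through the Jacobian and the fact that images of abelian varieties under homomorphisms are abelian varieties; both are standard via Poincaré reducibility.

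Combining both cases, \(B\) contains no analytic subsets of dimension \(1\) or \(2\), which proves Claim~\ref{claim:B-no-positive-dimensional-subsets}.
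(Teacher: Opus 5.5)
Your proof is correct. The curve case is essentially the paper's argument: normalization, factorization through \(\operatorname{Jac}(\widetilde Y)\), simplicity forcing the image subtorus to be all of \(B\), and projectivity of a quotient of an abelian variety contradicting \(a(B)=0\).

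The difference is in the surface case. The paper cites the fact that a complex torus of algebraic dimension zero carries no codimension-one analytic subsets (Birkenhake--Lange), which is the alternative you mention in passing. You instead combine \(\operatorname{NS}(B)=0\) with the positivity \(\int_Y\omega^2>0\) of the fundamental class of \(Y\).

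Your route is self-contained. It is the same numerical argument the paper itself uses later to show that \(X_A\) has no divisors. It does rely on the stronger input \(\operatorname{NS}(B)=0\), which implies \(a(B)=0\). The paper's proof of this claim needs only simplicity and \(a(B)=0\), at the price of invoking the structure theory of line bundles on tori: an effective divisor gives a nonzero positive semidefinite Hermitian form, and the quotient by its kernel is a positive-dimensional abelian variety. Since Bandman--Zarhin provide all three properties of \(B\), either route works here.
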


\begin{proof}
Since \(\dim B=3\), it is enough to exclude curves and surfaces. First, \(B\)
has no surfaces. Indeed, a surface in \(B\) is an analytic subset of
codimension one. Since \(a(B)=0\), a complex torus of algebraic dimension zero
has no analytic subsets of codimension one by
\cite[Chapter~2, Corollary~6.4]{BL99}. Hence \(B\) has no surfaces. We now exclude curves. Suppose that \(C\subset B\) is an irreducible analytic
curve. Let
\begin{equation}\label{eq:ex-normalization}
    \nu:\widetilde C\longrightarrow C
\end{equation}
be the normalization, and let
\begin{equation}\label{eq:ex-curve-map}
    \varphi:\widetilde C\longrightarrow B
\end{equation}
be the induced nonconstant holomorphic map. After translating \(B\), we may
assume that \(\varphi(c_0)=0\) for some \(c_0\in\widetilde C\). By the universal
property of the Albanese map \cite[Proposition~3.3.8]{Huy05}, the map
\(\varphi\) factors through a homomorphism of complex tori
\begin{equation}\label{eq:ex-albanese-factorization}
    \operatorname{Alb}(\widetilde C)\longrightarrow B.
\end{equation}
Since \(\widetilde C\) is a compact Riemann surface, its Albanese torus is its
Jacobian:
\begin{equation}\label{eq:ex-jacobian}
    \operatorname{Alb}(\widetilde C)\simeq \operatorname{Jac}(\widetilde C);
\end{equation}
see \cite[Chapter~11]{BL04}. Hence we obtain a homomorphism
\begin{equation}\label{eq:ex-jac-to-B}
    \operatorname{Jac}(\widetilde C)\longrightarrow B
\end{equation}
whose composition with the Abel--Jacobi map
\[
    \widetilde C\longrightarrow \operatorname{Jac}(\widetilde C)
\]
is equal to \(\varphi\). The image of a homomorphism of complex tori is a complex subtorus; see
\cite[Chapter~1, Section~2]{BL99}. Since \(\varphi\) is nonconstant, the image
of \eqref{eq:ex-jac-to-B} is positive-dimensional. Since \(B\) is simple, this
image must be all of \(B\). Thus \(B\) is a quotient of the abelian variety
\(\operatorname{Jac}(\widetilde C)\). By the quotient theorem for abelian
varieties, such a quotient is again an abelian variety; see
\cite[Chapter~4]{BL04}. Hence \(B\) is projective, contradicting \(a(B)=0\).
Therefore \(B\) has no curves. Consequently \(B\) has no positive-dimensional
proper analytic subsets.
\end{proof}
We now construct elliptic fibrations over \(B\). Fix an elliptic curve
\begin{equation}\label{eq:ex-elliptic-curve}
    E=(U,J_E)/\Lambda_E,
\end{equation}
where \(U\) is a real two-dimensional vector space, \(J_E^2=-1\), and
\(\Lambda_E\subset U\) is a lattice. Put
\begin{equation}\label{eq:ex-total-real-data}
    V:=U\oplus W,
    \qquad
    \Lambda:=\Lambda_E\oplus \Gamma.
\end{equation}
We want to put complex structures on the fixed real torus \(V/\Lambda\) in such
a way that the projection to \(W/\Gamma=B\) remains holomorphic and the kernel
is the fixed elliptic curve \(E\). This explains the following definition. The
\(U\)-part must restrict to \(J_E\), the \(W\)-part must project to \(J_B\), and
the only freedom is a cross-term from \(W\) to \(U\). The condition that the
resulting endomorphism square to \(-1\) forces this cross-term to be
anti-linear. Set
\begin{equation}\label{eq:ex-antilinear-space}
    \mathcal A
    :=
    \{A\in \operatorname{Hom}_{\mathbb R}(W,U)\mid J_EA+AJ_B=0\}.
\end{equation}
Thus \(A\in\mathcal A\) is complex anti-linear as a map
\[
    (W,J_B)\longrightarrow (U,J_E).
\]
For \(A\in\mathcal A\), define an endomorphism \(J_A\) of \(V\) by
\begin{equation}\label{eq:ex-JA-definition}
    J_A(u,w):=(J_Eu+Aw,J_Bw),
    \qquad (u,w)\in U\oplus W.
\end{equation}
Since every vector of \(V=U\oplus W\) has a unique decomposition \((u,w)\), this
defines \(J_A\) on all of \(V\). Using \eqref{eq:ex-antilinear-space}, we get
\begin{equation}\label{eq:ex-JA-square}
    J_A^2(u,w)
    =
    \bigl(-u+(J_EA+AJ_B)w,-w\bigr)
    =
    (-u,-w).
\end{equation}
Hence \(J_A^2=-1\). Therefore
\begin{equation}\label{eq:ex-XA-definition}
    X_A:=(V,J_A)/\Lambda
\end{equation}
is a four-dimensional complex torus. The projection
\begin{equation}\label{eq:ex-projection}
    p:V=U\oplus W\longrightarrow W
\end{equation}
is complex-linear from \((V,J_A)\) to \((W,J_B)\), because
\[
    p(J_A(u,w))=J_Bw=J_Bp(u,w).
\]
Moreover, \(p(\Lambda)=\Gamma\). Hence \(p\) descends to a holomorphic
surjective homomorphism
\begin{equation}\label{eq:ex-fA}
    f_A:X_A\longrightarrow B.
\end{equation}
Its kernel is
\begin{equation}\label{eq:ex-kernel}
    \ker(f_A)=U/\Lambda_E=E.
\end{equation}
Thus we have an exact sequence of complex tori
\begin{equation}\label{eq:ex-torus-extension}
    0\longrightarrow E\longrightarrow X_A
    \xrightarrow{\,f_A\,} B\longrightarrow 0.
\end{equation}
In particular, \(f_A\) is a smooth isotrivial elliptic fibration. It remains to choose \(A\in\mathcal A\) so that \(\operatorname{NS}(X_A)=0\). We first record the elementary flexibility of the
anti-linear maps in \(\mathcal A\). This flexibility is the reason the generic
choice of \(A\) will destroy all integral \((1,1)\)-classes on \(X_A\).

\begin{claim}\label{claim:antilinear-prescribed-values}
Let \(\mathcal A\) be as in \eqref{eq:ex-antilinear-space}. Then the following
hold.
\begin{enumerate}
    \item If \(w\in W\) is nonzero, then, as \(A\) varies in \(\mathcal A\), the
    value \(Aw\) can be chosen arbitrarily in \(U\).

    \item If \(w_1,w_2\in W\) are complex-linearly independent, then, for every
    \(v\in U\), there exists \(A\in\mathcal A\) such that
    \begin{equation}\label{eq:ex-prescribed-two-values}
        Aw_1=v,
        \qquad
        Aw_2=0.
    \end{equation}
\end{enumerate}
\end{claim}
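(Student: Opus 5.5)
The key observation is that $\mathcal A$ is exactly the space of complex anti-linear maps $(W,J_B)\to(U,J_E)$. Equivalently, it is the space of complex-linear maps $(W,-J_B)\to(U,J_E)$, that is, $\operatorname{Hom}_{\mathbb C}(\overline W,U)$. Both items are then statements about prescribing values of a complex-linear map on a complex-linearly independent set, and they follow from elementary linear algebra. Concretely, the identity $J_EA+AJ_B=0$ says $A(J_Bw)=-J_E(Aw)$. So $A$ is additive and satisfies $A(\lambda w)=\bar\lambda\,A(w)$ for $\lambda\in\mathbb C$, where $\mathbb C$ acts on $W$ via $J_B$ and on $U$ via $J_E$. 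Conversely, any such map lies in $\mathcal A$. I will state this reformulation first.

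For item (2), let $w_1,w_2$ be complex-linearly independent in $(W,J_B)$, and extend them to a complex basis $w_1,w_2,w_3$ of the three-dimensional complex space $(W,J_B)$. Given $v\in U$, I define
\[
A\Bigl(\sum_{k}\lambda_k w_k\Bigr):=\bar\lambda_1\, v,
\]
with the complex scalar action on $U$ given by $J_E$, i.e. $(a+ib)\cdot u=au+bJ_Eu$. This map is real-linear and conjugate-linear by construction, so it lies in $\mathcal A$, and it satisfies $Aw_1=v$ and $Aw_2=0$. The one thing to verify carefully is the conjugate-linearity identity $A(J_Bw)=-J_EA(w)$. Since $J_Bw$ has coordinates $i\lambda_k$, we get $A(J_Bw)=\overline{i\lambda_1}\,v=-i\bar\lambda_1 v=-J_E(\bar\lambda_1 v)$, as required.

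Item (1) follows by the same construction. A nonzero vector $w$ is complex-linearly independent on its own, so I extend it to a complex basis of $W$ and set $A(\sum\lambda_kw_k)=\bar\lambda_1 v$ for the desired value $v\in U$. (Alternatively, since $\dim_{\mathbb C}W=3\ge2$, I can take any $w_2$ independent of $w$ and apply (2) directly.) Note that real-linear independence would not be enough here: the value $A(J_Bw)$ is forced by $Aw$, which is exactly why the hypothesis in (2) is complex independence.

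I do not expect any real obstacle. The only point requiring care is the sign convention in the conjugate-linearity check, and making sure $U$ is regarded as a one-dimensional complex vector space via $J_E$, so that "arbitrary $v\in U$" is attained as $\bar\lambda_1 v$ with $\lambda_1=1$.
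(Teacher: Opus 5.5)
Your proposal is correct and is essentially the paper's proof. The paper sets $A(w_1)=v$, $A(J_Bw_1)=-J_Ev$, makes $A$ vanish on $w_2, J_Bw_2$, and extends by zero on a $J_B$-invariant complement; this is exactly your map $\sum_k\lambda_k w_k\mapsto\bar\lambda_1 v$ written in coordinates, and it handles item (1) the same way with the complex line $\mathbb R w\oplus\mathbb R J_Bw$.
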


\begin{proof}
First let \(w\in W\) be nonzero and let \(v\in U\). Set
\begin{equation}\label{eq:ex-complex-line-L}
    L:=\mathbb Rw\oplus \mathbb R J_Bw\subset W.
\end{equation}
Then \(L\) is a complex line in \((W,J_B)\). Define \(A\) on \(L\) by
\begin{equation}\label{eq:ex-A-on-line}
    A(w)=v,
    \qquad
    A(J_Bw)=-J_Ev.
\end{equation}
Then \(J_EA+AJ_B=0\) on \(L\). Choose a \(J_B\)-invariant complement \(L'\) of
\(L\) in \(W\), and define \(A=0\) on \(L'\). The resulting real-linear map
\(A:W\to U\) satisfies \(J_EA+AJ_B=0\). Thus \(A\in\mathcal A\), and \(Aw=v\).
This proves the first assertion.

For the second assertion, let \(w_1,w_2\in W\) be complex-linearly independent
and let \(v\in U\). Let
\begin{equation}\label{eq:ex-complex-plane-L}
    L:=\langle w_1,w_2\rangle_{\mathbb C}\subset W
\end{equation}
be the complex subspace they generate. Define \(A\) on \(L\) by
\begin{equation}\label{eq:ex-A-on-plane}
    A(w_1)=v,
    \quad
    A(J_Bw_1)=-J_Ev,
    \quad
    A(w_2)=0,
    \quad
    A(J_Bw_2)=0.
\end{equation}
Then \(J_EA+AJ_B=0\) on \(L\). Extending \(A\) by zero on a
\(J_B\)-invariant complement of \(L\), we obtain \(A\in\mathcal A\) satisfying
\eqref{eq:ex-prescribed-two-values}.
\end{proof}
We now use the Appell--Humbert description of line bundles on complex tori. If
\[
    T=(V,J)/\Lambda
\]
is a complex torus, then \(\operatorname{NS}(T)\) is identified with the group
of integral alternating forms
\[
    Q\in \bigwedge^2\Lambda^\vee
\]
satisfying the Hodge condition
\begin{equation}\label{eq:ex-Hodge-condition}
    Q(Jx,Jy)=Q(x,y)
    \qquad
    \forall x,y\in V.
\end{equation}
See \cite[Chapter~2]{BL04}. For a nonzero \(Q\in \bigwedge^2\Lambda^\vee\), define the exceptional set
\begin{equation}\label{eq:ex-SQ-definition}
    S_Q
    :=
    \{A\in\mathcal A\mid Q(J_Ax,J_Ay)=Q(x,y)
    \text{ for all }x,y\in V\}.
\end{equation}
Thus \(S_Q\) is precisely the set of parameters \(A\) for which the fixed
integral alternating form \(Q\) becomes a \((1,1)\)-class on \(X_A\). We will
show that each \(S_Q\) is a proper closed set with empty interior, and then use
Baire category to avoid all of them at once.

\begin{claim}\label{claim:SQ-real-algebraic-empty-interior}
The set \(S_Q\) is a real-algebraic subset of the real vector space
\(\mathcal A\). Moreover, if \(S_Q\neq\mathcal A\), then \(S_Q\) has empty
interior in \(\mathcal A\).
\end{claim}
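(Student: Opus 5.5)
The plan has two parts: first show that \(S_Q\) is cut out by polynomial equations in the coordinates of \(A\), and then deduce empty interior from the fact that a polynomial vanishing on an open set vanishes identically.

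For the real-algebraic structure, fix a real basis of \(\mathcal A\), say \(A_1,\dots,A_k\), so that \(A=\sum_i t_iA_i\) with \(t\in\mathbb R^k\). By \eqref{eq:ex-JA-definition}, \(J_A\) is affine-linear in \(t\): in a fixed real basis of \(V\) we have \(J_A=J_0+\sum_i t_i N_i\), where \(J_0=J_E\oplus J_B\) and \(N_i(u,w)=(A_iw,0)\). Write \(Q\) as a real antisymmetric matrix, also denoted \(Q\), in the same basis of \(V\); this is its \(\mathbb R\)-bilinear extension from \(\Lambda\). The defining condition of \(S_Q\) is then the matrix identity
\[
J_A^{T}\,Q\,J_A=Q,
\]
whose entries are polynomials in \(t\) of degree at most \(2\). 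Hence \(S_Q\) is the common zero set of finitely many real polynomials on \(\mathcal A\simeq\mathbb R^k\), so it is real-algebraic and in particular closed.

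For empty interior, suppose \(S_Q\) contains a nonempty open subset \(\Omega\subset\mathcal A\). Each defining polynomial \(P_{jl}(t)=(J_A^TQJ_A-Q)_{jl}\) vanishes on \(\Omega\). A real polynomial vanishing on a nonempty open subset of \(\mathbb R^k\) is identically zero; one sees this by restricting to lines through a point of \(\Omega\), where it becomes a one-variable polynomial with infinitely many zeros. So every \(P_{jl}\) is identically zero, which means \(S_Q=\mathcal A\). This contradicts the hypothesis, so \(S_Q\) has empty interior whenever \(S_Q\neq\mathcal A\).

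The main point to check is that the condition \(Q(J_Ax,J_Ay)=Q(x,y)\) for all \(x,y\in V\) really reduces to finitely many polynomial identities. By bilinearity it suffices to test \(x,y\) running over a fixed real basis of \(V\), and this is what turns the condition into the matrix identity above. The remaining step, the identity theorem for polynomials, is elementary. Whether \(S_Q\neq\mathcal A\) actually holds for each nonzero \(Q\) is not part of this claim; I expect that to be settled separately using Claim~\ref{claim:antilinear-prescribed-values}.
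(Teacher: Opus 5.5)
Your proposal is correct and follows essentially the same route as the paper. Both arguments reduce the Hodge condition to finitely many polynomial equations of degree at most \(2\) by testing on a real basis of \(V\), using that \(J_A\) depends affinely on the coordinates of \(A\). Both then get empty interior from the fact that a polynomial vanishing on a nonempty open set vanishes identically.
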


\begin{proof}
Choose a real basis \(e_1,\dots,e_8\) of \(V\). For \(A\in\mathcal A\), define
\begin{equation}\label{eq:ex-BA-definition}
    B_A(x,y):=Q(J_Ax,J_Ay)-Q(x,y).
\end{equation}
Then \(B_A\) is an alternating real-bilinear form on \(V\). Therefore
\(B_A=0\) if and only if
\begin{equation}\label{eq:ex-BA-basis-criterion}
    B_A(e_i,e_j)=0
    \qquad
    \text{for all }1\leq i<j\leq 8.
\end{equation}
Indeed, if \(x=\sum_i a_ie_i\) and \(y=\sum_j b_je_j\), then
\[
    B_A(x,y)=\sum_{i,j}a_ib_jB_A(e_i,e_j).
\]
Hence \eqref{eq:ex-SQ-definition} can be rewritten as
\begin{equation}\label{eq:ex-SQ-polynomial-equations}
    S_Q
    =
    \{A\in\mathcal A\mid
    Q(J_Ae_i,J_Ae_j)-Q(e_i,e_j)=0
    \text{ for all }1\leq i<j\leq 8\}.
\end{equation}

Choose a real basis \(A_1,\dots,A_N\) of \(\mathcal A\). Every
\(A\in\mathcal A\) can be written uniquely as
\begin{equation}\label{eq:ex-A-coordinates}
    A=t_1A_1+\cdots+t_NA_N,
    \qquad
    t_1,\dots,t_N\in\mathbb R.
\end{equation}
For fixed \(i\), the vector \(J_Ae_i\) depends linearly on
\(t_1,\dots,t_N\) by \eqref{eq:ex-JA-definition}. Since \(Q\) is bilinear, each
expression
\begin{equation}\label{eq:ex-polynomial-expression}
    Q(J_Ae_i,J_Ae_j)-Q(e_i,e_j)
\end{equation}
is a real polynomial of degree at most \(2\) in \(t_1,\dots,t_N\). Thus
\(S_Q\) is real-algebraic.

Now assume \(S_Q\neq\mathcal A\). Then at least one of the polynomial functions
\eqref{eq:ex-polynomial-expression} is not identically zero on \(\mathcal A\).
If \(S_Q\) contained a nonempty open subset \(O\subset\mathcal A\), then every
defining polynomial in \eqref{eq:ex-SQ-polynomial-equations} would vanish on
\(O\). A real polynomial on a finite-dimensional real vector space which
vanishes on a nonempty open subset vanishes identically. Hence all defining
polynomials would vanish identically on \(\mathcal A\), forcing
\(S_Q=\mathcal A\), a contradiction. Therefore \(S_Q\) has empty interior.
\end{proof}

\begin{claim}\label{claim:SQ-proper}
For every nonzero \(Q\in \bigwedge^2\Lambda^\vee\), one has
\begin{equation}\label{eq:ex-SQ-proper}
    S_Q\subsetneq\mathcal A.
\end{equation}
\end{claim}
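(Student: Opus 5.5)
We argue by contradiction. Suppose $Q\in\bigwedge^2\Lambda^\vee$ is nonzero and $S_Q=\mathcal A$. Since $\mathcal A$ is a real vector space, $tA\in\mathcal A$ for all $t\in\mathbb R$ and $A\in\mathcal A$. The plan is to expand the defining identity in $t$, extract constraints from each coefficient, and conclude that $Q$ is the pullback of an integral $(1,1)$-form on $B$. Since $\operatorname{NS}(B)=0$ by \eqref{eq:ex-base-properties}, this will force $Q=0$.

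\textbf{Expansion in $t$.} Write $J_A=J_0+N_A$, where $N_A(u,w):=(Aw,0)$ by \eqref{eq:ex-JA-definition}, and note $N_{tA}=tN_A$. For fixed $x,y\in V$, the expression $Q(J_{tA}x,J_{tA}y)-Q(x,y)$ is a polynomial of degree at most $2$ in $t$ that vanishes for every $t$. Hence all three of its coefficients vanish:
\[
Q(J_0x,J_0y)=Q(x,y),\qquad Q(N_Ax,J_0y)+Q(J_0x,N_Ay)=0,\qquad Q(N_Ax,N_Ay)=0,
\]
for all $A\in\mathcal A$ and all $x,y\in V$.

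\textbf{Quadratic coefficient: $Q|_{U\times U}=0$.} Take $x=(0,w_1)$ and $y=(0,w_2)$ with $w_1,w_2\in W$ complex-linearly independent; this is possible since $\dim_{\mathbb C}W=3$. By Claim~\ref{claim:antilinear-prescribed-values}(2), applied twice with the roles of $w_1,w_2$ swapped, and by additivity of $\mathcal A$, there is $A\in\mathcal A$ with $Aw_1=v$ and $Aw_2=v'$ for arbitrary $v,v'\in U$. The quadratic coefficient then gives $Q(v,v')=0$ for all $v,v'\in U$.

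\textbf{Linear coefficient: $Q(U,W)=0$.} Again take $x=(0,w)$ and $y=(0,w')$ with $w,w'$ complex-independent. The linear coefficient reads
\[
Q(Aw,J_Bw')+Q(J_Bw,Aw')=0.
\]
Choose $A$ with $Aw=v$ arbitrary and $Aw'=0$, again by Claim~\ref{claim:antilinear-prescribed-values}(2). This gives $Q(v,J_Bw')=0$. Every nonzero $w'\in W$ admits such a complex-independent $w$, so $Q(v,J_Bw')=0$ for all $w'\in W$. Since $J_B$ is bijective on $W$, we get $Q(U,W)=0$.

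\textbf{Constant coefficient and conclusion.} Combining the two vanishing statements, $Q$ is the pullback under $p$ of its restriction $Q_W:=Q|_{W\times W}$. Because $\Lambda=\Lambda_E\oplus\Gamma$, the restriction $Q_W$ is integral on $\Gamma$. The constant coefficient, applied to $x,y\in W$, gives $Q_W(J_Bx,J_By)=Q_W(x,y)$. So $Q_W$ satisfies the Hodge condition \eqref{eq:ex-Hodge-condition} on $B$ and defines a class in $\operatorname{NS}(B)=0$. Therefore $Q_W=0$, hence $Q=0$, contradicting the choice of $Q$.

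The only delicate step is the simultaneous prescription of values in the quadratic step. Claim~\ref{claim:antilinear-prescribed-values}(2) prescribes only one nonzero value at a time, so we must use that $\mathcal A$ is closed under addition to combine two such maps.
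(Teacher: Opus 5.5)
Your proof is correct and follows essentially the same route as the paper. Both arguments assume $S_Q=\mathcal A$, kill the $U\times U$ and $U\times W$ blocks of $Q$ by prescribing values of $A$ via Claim~\ref{claim:antilinear-prescribed-values}, and conclude from $\operatorname{NS}(B)=0$. The differences are only organizational: you separate the identity into homogeneous parts by scaling $A\mapsto tA$, and you kill $Q|_{U\times U}$ using the quadratic term of a $(W,W)$ pair, which needs two simultaneously prescribed values (you get these correctly by adding two maps from part (2)), whereas the paper uses the linear term of the mixed pair $(u,0),(0,w)$ and only part (1).
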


\begin{proof}
Suppose, to the contrary, that
\begin{equation}\label{eq:ex-SQ-equals-A}
    S_Q=\mathcal A.
\end{equation}
Decompose \(Q\) according to \(V=U\oplus W\):
\begin{equation}\label{eq:ex-Q-decomposition}
    Q=Q_{UU}+Q_{UW}+Q_{WW},
\end{equation}
where
\[
    Q_{UU}\in\bigwedge^2U^\vee,
    \qquad
    Q_{UW}\in U^\vee\otimes W^\vee,
    \qquad
    Q_{WW}\in\bigwedge^2W^\vee.
\]
Equivalently,
\begin{equation}\label{eq:ex-Q-decomposition-formula}
\begin{aligned}
    Q((u_1,w_1),(u_2,w_2))
    &=
    Q_{UU}(u_1,u_2)
    +Q_{UW}(u_1,w_2)  \\
    &\quad
    -Q_{UW}(u_2,w_1)
    +Q_{WW}(w_1,w_2).
\end{aligned}
\end{equation}
We first show that \(Q_{UU}=0\). Take \(u\in U\) and \(w\in W\). In the next
formula we regard \(u\) as \((u,0)\in V\), and \(w\) as \((0,w)\in V\). By
\eqref{eq:ex-SQ-equals-A}, the condition
\eqref{eq:ex-Hodge-condition} holds for every \(A\in\mathcal A\), so
\[
    Q(J_A(u,0),J_A(0,w))=Q((u,0),(0,w)).
\]
Using \eqref{eq:ex-JA-definition}, this is
\[
    Q((J_Eu,0),(Aw,J_Bw))=Q((u,0),(0,w)).
\]
By \eqref{eq:ex-Q-decomposition-formula}, we obtain
\begin{equation}\label{eq:ex-mixed-equation}
    Q_{UU}(J_Eu,Aw)+Q_{UW}(J_Eu,J_Bw)=Q_{UW}(u,w).
\end{equation}
For fixed nonzero \(w\), Claim \ref{claim:antilinear-prescribed-values} shows
that \(Aw\) can be chosen arbitrarily in \(U\). Varying \(Aw\) in
\eqref{eq:ex-mixed-equation}, we get
\begin{equation}\label{eq:ex-QUU-zero}
    Q_{UU}(J_Eu,v)=0
    \qquad
    \forall u,v\in U.
\end{equation}
Since \(J_E\) is an automorphism of \(U\), \eqref{eq:ex-QUU-zero} implies
\begin{equation}\label{eq:ex-QUU-vanishes}
    Q_{UU}=0.
\end{equation}
We next show that \(Q_{UW}=0\). Take \(w_1,w_2\in W\). Again using
\eqref{eq:ex-SQ-equals-A}, now with \(x=(0,w_1)\) and \(y=(0,w_2)\), we get
\[
    Q(J_A(0,w_1),J_A(0,w_2))=Q((0,w_1),(0,w_2)).
\]
Using \eqref{eq:ex-JA-definition} and \eqref{eq:ex-QUU-vanishes}, this becomes
\begin{equation}\label{eq:ex-WW-equation}
    Q_{UW}(Aw_1,J_Bw_2)
    -
    Q_{UW}(Aw_2,J_Bw_1)
    +
    Q_{WW}(J_Bw_1,J_Bw_2)
    =
    Q_{WW}(w_1,w_2).
\end{equation}
Choose \(w_1,w_2\) complex-linearly independent. Since
\(\dim_{\mathbb C}W=3\), this is possible for every nonzero \(w_2\). By Claim
\ref{claim:antilinear-prescribed-values}, for every \(v\in U\) we may choose
\(A\in\mathcal A\) such that
\[
    Aw_1=v,
    \qquad
    Aw_2=0.
\]
Substituting this into \eqref{eq:ex-WW-equation} and varying \(v\), we obtain
\begin{equation}\label{eq:ex-QUW-zero-on-vector}
    Q_{UW}(v,J_Bw_2)=0
    \qquad
    \forall v\in U.
\end{equation}
Since \(w_2\neq 0\) was arbitrary and \(J_B\) is an automorphism of \(W\),
\eqref{eq:ex-QUW-zero-on-vector} implies
\begin{equation}\label{eq:ex-QUW-vanishes}
    Q_{UW}=0.
\end{equation}
Thus only \(Q_{WW}\) remains. The invariance condition
\eqref{eq:ex-Hodge-condition} becomes
\begin{equation}\label{eq:ex-QWW-Hodge}
    Q_{WW}(J_Bw_1,J_Bw_2)=Q_{WW}(w_1,w_2)
    \qquad
    \forall w_1,w_2\in W.
\end{equation}
Therefore \(Q_{WW}\) defines an integral \((1,1)\)-class on \(B\), hence an
element of \(\operatorname{NS}(B)\). By \eqref{eq:ex-base-properties},
\(\operatorname{NS}(B)=0\). Therefore
\begin{equation}\label{eq:ex-QWW-vanishes}
    Q_{WW}=0.
\end{equation}
Equations \eqref{eq:ex-QUU-vanishes}, \eqref{eq:ex-QUW-vanishes}, and
\eqref{eq:ex-QWW-vanishes} imply \(Q=0\), contradicting the assumption that
\(Q\neq 0\). Hence \(S_Q\subsetneq\mathcal A\).
\end{proof}
By Claims \ref{claim:SQ-real-algebraic-empty-interior} and
\ref{claim:SQ-proper}, for every nonzero \(Q\in\bigwedge^2\Lambda^\vee\), the
set \(S_Q\) is closed and has empty interior in \(\mathcal A\). Since
\(\bigwedge^2\Lambda^\vee\) is countable, the union
\begin{equation}\label{eq:ex-countable-union}
    \bigcup_{0\neq Q\in\bigwedge^2\Lambda^\vee}S_Q
\end{equation}
is a countable union of closed subsets with empty interior in the
finite-dimensional real vector space \(\mathcal A\). By the Baire category
theorem, its complement is nonempty. Choose
\begin{equation}\label{eq:ex-generic-A}
    A\in
    \mathcal A\setminus
    \bigcup_{0\neq Q\in\bigwedge^2\Lambda^\vee}S_Q.
\end{equation}
For this choice of \(A\), no nonzero integral alternating form on \(\Lambda\)
is of type \((1,1)\) with respect to \(J_A\). By the Appell--Humbert
description, this gives
\begin{equation}\label{eq:ex-NSXA-zero}
    \operatorname{NS}(X_A)=0.
\end{equation}
Now set
\begin{equation}\label{eq:ex-final-X-f}
    X:=X_A,
    \qquad
    f:=f_A,
\end{equation}
where \(A\) is chosen as in \eqref{eq:ex-generic-A}. We prove that this
fourfold has the desired properties. First, \(X\) has no divisors. Suppose \(D\subset X\) is an effective divisor. Since \(X\) is a complex torus, it is K\"ahler. Let \(\omega\) be a K\"ahler
form on \(X\). The fundamental class of an analytic cycle on a compact
K\"ahler manifold is an integral Hodge class of the corresponding type
\cite[Chapter~11]{Voisin02}. Hence
\begin{equation}\label{eq:ex-divisor-Hodge-class}
    c_1(\mathcal O_X(D))\in H^{1,1}(X)\cap H^2(X,\mathbb Z).
\end{equation}
Moreover,
\begin{equation}\label{eq:ex-divisor-positive-intersection}
    \int_X c_1(\mathcal O_X(D))\wedge \omega^3
    =
    \int_D \omega^3
    >
    0.
\end{equation}
Therefore \(c_1(\mathcal O_X(D))\neq 0\). This contradicts
\eqref{eq:ex-NSXA-zero}. Hence \(X\) has no divisors. Second, \(X\) has no surfaces. Suppose \(S\subset X\) is an irreducible
analytic surface. Since \(f:X\to B\) is proper, Remmert's proper mapping
theorem \cite[Chapter~V, \S5]{GR84} implies that
\begin{equation}\label{eq:ex-image-surface}
    f(S)\subset B
\end{equation}
is an analytic subset. If \(\dim f(S)=0\), then \(S\) is contained in a fibre
of \(f\). But every fibre is the elliptic curve \(E\), hence is
one-dimensional, a contradiction. If \(\dim f(S)=1\) or \(\dim f(S)=2\), then
\(B\) contains a positive-dimensional proper analytic subset, contradicting
Claim \ref{claim:B-no-positive-dimensional-subsets}. Finally,
\(\dim f(S)=3\) is impossible because \(\dim S=2\). Therefore \(X\) has no
analytic surfaces. Finally, since \(X\) is a four-dimensional complex torus, its cotangent bundle
is trivial. Hence
\begin{equation}\label{eq:ex-torus-hodge-numbers}
    h^0(X,\Omega_X^p)=\binom{4}{p}.
\end{equation}
In particular,
\begin{equation}\label{eq:ex-final-invariants}
    h^0(X,\Omega_X^3)=\binom{4}{3}=4,
    \qquad
    K_X\simeq\mathcal O_X,
    \qquad
    \chi(X,\mathcal O_X)=0.
\end{equation}
Thus \(f:X\to B\) is a smooth elliptic fibration over a three-dimensional torus
\(B\) with no positive-dimensional proper analytic subvarieties, while the
total space \(X\) has no divisors and no surfaces and satisfies
\eqref{eq:ex-final-invariants}.
\end{example}

\begin{remark}
    The above example shows that corollary \ref{cor:euler0} is more general than conjecture \ref{conj:no-subvarieties}. 
\end{remark}

\subsection{The reduction to nef case}\label{sec:nef}
\begin{proposition}
\label{prop:propleq1}
Let \(X\) be a compact K\"ahler manifold of dimension \(n\), and let \(L\) be a pseudo-effective line bundle on \(X\) such that
\[
c_1(L)_{\mathbb R}\neq 0.
\]
Let \(h\) be a singular Hermitian metric on \(L\) with semipositive curvature current, and set
\[
\mathcal I_m:=\mathcal I(h^m).
\]
Assume that
\begin{enumerate}
    \item \(V(\mathcal I_m)\) has dimension at most \(1\) for every \(m\geq 1\);
    \item \(\chi(X,K_X)>0\) and \(\chi(X,K_X\otimes L^m)\geq 0\) for every \(m\geq 1\).
\end{enumerate}
Then \(X\) admits a codimension-one analytic subset.
\end{proposition}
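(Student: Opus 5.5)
We argue by contradiction: assume \(X\) has no codimension-one analytic subset, and derive from this that \(L\) is torsion. Torsion forces \(c_1(L)_{\mathbb R}=0\), contradicting the hypothesis. The strategy mirrors the proof of Lemma~\ref{lem:no-subvarieties-chi-or-torsion}, with \(K_X\otimes L^m\) in place of \(mK_X\). The goal is to produce, for infinitely many \(m\), a nonzero section of \(\Omega_X^{n-q}\otimes L^{m-1}\) for some fixed \(q\), and then invoke \cite[Proposition~2.6]{AH22}.

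First, consider \(P(m):=\chi(X,K_X\otimes L^m)\). By Hirzebruch--Riemann--Roch it is a polynomial in \(m\) with \(P(0)=\chi(X,K_X)>0\), so it is not identically zero. Since \(P(m)\geq 0\) for all \(m\geq 1\), we get \(P(m)>0\) for infinitely many \(m\geq1\). For each such \(m\) some \(H^q(X,K_X\otimes L^m)\) is nonzero. If \(q=0\) occurs for some \(m\), we have a nonzero section of a line bundle \(K_X\otimes L^m\). I expect this to be best handled by pushing it into the same final step rather than by Lemma~\ref{lem:line-sections}, since the latter would only give \(K_X\otimes L^m\simeq \cO_X\). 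The cleanest route is to pass to a subsequence with a fixed \(q\in\{0,\dots,n\}\) such that \(H^q(X,K_X\otimes L^m)\neq 0\) for infinitely many \(m\). Positivity of \(\chi\) actually forces an even \(q\) for each such \(m\).

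Next, we bring in multiplier ideals, using hypothesis~(1). Set \(Q_{m-1}:=\cO_X/\mathcal I_{m-1}\). It is supported in dimension at most \(1\), so \(H^i(X,Q_{m-1}\otimes F)=0\) for \(i\geq 2\) and any line bundle \(F\). From the sequence \(0\to K_X\otimes L^m\otimes\mathcal I_{m-1}\to K_X\otimes L^m\to K_X\otimes L^m\otimes Q_{m-1}\to0\), the map \(H^q(K_X\otimes L^m\otimes \mathcal I_{m-1})\to H^q(K_X\otimes L^m)\) is surjective for \(q\geq 2\). This is the delicate point. For \(q=1\) the term \(H^1\) of the curve-supported quotient obstructs the argument. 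For \(q=0\), instead, a nonzero section of \(K_X\otimes L^m\) is itself a nonzero section of \(\Omega^n_X\otimes L^m\), which is already what we want. So I would handle \(q=0\) directly, and handle \(q\geq2\) via Theorem~\ref{thm:DPS-HL} applied to \(L^{m-1}\) with metric \(h^{m-1}\). That theorem gives a surjection
\[
H^0\bigl(X,\Omega_X^{n-q}\otimes L^{m-1}\otimes\mathcal I_{m-1}\bigr)\twoheadrightarrow H^q\bigl(X,K_X\otimes L^{m}\otimes\mathcal I_{m-1}\bigr),
\]
hence nonzero sections of \(\Omega_X^{n-q}\otimes L^{m-1}\) for infinitely many \(m\). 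The case \(q=1\) is the main obstacle. To eliminate it, I would use that \(P(m)>0\) and hence some even-degree group is nonzero: if only odd degrees were nonzero, \(\chi\) would be \(\leq 0\). So one may always choose \(q\) even, namely \(q=0\) or \(q\geq 2\), for each such \(m\), and then pass to a subsequence with a fixed such \(q\).

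Finally, apply \cite[Proposition~2.6]{AH22} with the torsion-free sheaf \(\mathcal E=\Omega_X^{n-q}\) (twisted by \(L^{-1}\), which is harmless) and the line bundle \(L\). Nonvanishing of \(H^0(X,\mathcal E\otimes L^{m})\) for infinitely many \(m\) implies that either \(X\) contains an effective divisor or \(L\) is torsion. The first alternative gives the desired codimension-one analytic subset. The second gives \(c_1(L)_{\mathbb R}=0\), contradicting the hypothesis. Hence \(X\) admits a codimension-one analytic subset.
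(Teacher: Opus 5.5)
Your overall strategy is the paper's. Positivity of the Riemann--Roch polynomial forces a nonzero even-degree group. Hypothesis~(1) makes the multiplier-ideal map surjective in degrees \(\geq 2\). Theorem~\ref{thm:DPS-HL} turns this into twisted holomorphic forms, and \cite[Proposition~2.6]{AH22} finishes.

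There is, however, a concrete error in the Hard Lefschetz step: it is misindexed. Theorem~\ref{thm:DPS-HL} applied to \((L^{m-1},h^{m-1})\) gives a surjection onto \(H^q(X,K_X\otimes L^{m-1}\otimes\mathcal I_{m-1})\), not onto \(H^q(X,K_X\otimes L^{m}\otimes\mathcal I_{m-1})\). So your displayed surjection is not what the theorem provides. The nonvanishing of \(H^q(X,K_X\otimes L^m\otimes\mathcal I_{m-1})\) then tells you nothing about sections of \(\Omega_X^{n-q}\otimes L^{m-1}\).

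The shift by one that you transplanted from Lemma~\ref{lem:no-subvarieties-chi-or-torsion} works there only because \(mK_X=K_X\otimes(m-1)K_X\), with \(K_X\) itself the pseudo-effective bundle carrying the metric. Here the twisting bundle is \(L\), and the factor \(K_X\) in \(K_X\otimes L^m\) is already the \(K_X\) appearing in the theorem. The fix is to use \(\mathcal I_m\) throughout:
\begin{itemize}
\item tensor \(0\to\mathcal I_m\to\cO_X\to\cO_X/\mathcal I_m\to 0\) by \(K_X\otimes L^m\);
\item apply Theorem~\ref{thm:DPS-HL} to \((L^m,h^m)\);
\item this gives nonzero sections of \(\Omega_X^{n-q}\otimes L^m\otimes\mathcal I_m\subset\Omega_X^{n-q}\otimes L^m\).
\end{itemize}
This is exactly what the paper does, and hypothesis~(1) covers precisely these ideals \(\mathcal I_m\), \(m\geq 1\). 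No twist by \(L^{-1}\) is needed afterwards.

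With that correction your argument is valid and differs from the paper only at the extreme degrees. The paper first shows, using the absence of divisors and \(c_1(L)_{\mathbb R}\neq 0\), that \(H^0(X,K_X\otimes L^m)=0\) for \(m\gg 0\) and \(H^n(X,K_X\otimes L^m)\simeq H^0(X,L^{-m})^\vee=0\). So its even degree lies in \([2,n-1]\). You instead keep \(q=0\), feeding it directly to \cite[Proposition~2.6]{AH22} with \(\mathcal E=K_X\), and let \(q=n\) pass through Hard Lefschetz; both work.

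Contrary to your remark, Lemma~\ref{lem:line-sections} does suffice for \(q=0\). Applied to two values \(m_1\neq m_2\), it gives \(L^{m_1-m_2}\simeq\cO_X\), contradicting \(c_1(L)_{\mathbb R}\neq 0\). This is how the paper disposes of that case.
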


\begin{proof}
Assume, for contradiction, that \(X\) contains no codimension-one analytic subsets. Since \(c_1(L)_{\mathbb R}\neq 0\), the line bundle \(L\) is not torsion. We first claim that, for all sufficiently large \(m\), one has
\[
H^0(X,K_X\otimes L^m)=0
\qquad\text{and}\qquad
H^0(X,L^{-m})=0.
\]
Indeed, if \(H^0(X,K_X\otimes L^m)\neq 0\), then a nonzero section of \(K_X\otimes L^m\) either vanishes along a codimension-one analytic subset, which is impossible by assumption, or is nowhere vanishing. In the latter case,
\[
K_X\otimes L^m\simeq \mathcal O_X,
\]
hence
\[
c_1(K_X)+m c_1(L)=0
\]
in \(H^2(X,\mathbb R)\). Since \(c_1(L)_{\mathbb R}\neq 0\), this can happen for at most one value of \(m\). Thus \(H^0(X,K_X\otimes L^m)=0\) for all sufficiently large \(m\). Similarly, if \(H^0(X,L^{-m})\neq 0\), then the nonzero section either has a divisorial zero locus or is nowhere vanishing. The first case is impossible, and the second gives \(L^m\simeq \mathcal O_X\), contradicting \(c_1(L)_{\mathbb R}\neq 0\). Therefore
\[
H^0(X,L^{-m})=0
\]
for every \(m\geq 1\). By Serre duality,
\[
H^n(X,K_X\otimes L^m)\cong H^0(X,L^{-m})^\vee=0.
\]
By Hirzebruch--Riemann--Roch, \(\chi(X,K_X\otimes L^m)\) is a polynomial in \(m\). If this polynomial vanished for infinitely many positive integers \(m\), then it would vanish identically, and evaluating at \(m=0\) would give
\[
\chi(X,K_X)=0,
\]
contradicting the assumption \(\chi(X,K_X)>0\). Since \(\chi(X,K_X\otimes L^m)\geq 0\) for every \(m\), it follows that
\[
\chi(X,K_X\otimes L^m)>0
\]
for all sufficiently large \(m\). For such \(m\), the vanishing of \(H^0(X,K_X\otimes L^m)\) and \(H^n(X,K_X\otimes L^m)\) gives
\[
\chi(X,K_X\otimes L^m)
=
\sum_{q=1}^{n-1}(-1)^q h^q(X,K_X\otimes L^m).
\]
Since this number is positive, there exists an even integer \(2k\), with \(1\leq 2k\leq n-1\), such that
\[
H^{2k}(X,K_X\otimes L^m)\neq 0.
\]
After passing to an infinite subsequence, we may assume that the same integer \(2k\) works for an unbounded sequence \(m_i\). For each \(i\), consider the exact sequence
\[
0\longrightarrow \mathcal I_{m_i}\otimes K_X\otimes L^{m_i}
\longrightarrow K_X\otimes L^{m_i}
\longrightarrow \mathcal Q_{m_i}
\longrightarrow 0.
\]
By assumption, \(\mathcal Q_{m_i}\) is supported in dimension at most \(1\). Hence
\[
H^q(X,\mathcal Q_{m_i})=0
\qquad\text{for all }q\geq 2.
\]
Therefore the natural map
\[
H^{2k}(X,\mathcal I_{m_i}\otimes K_X\otimes L^{m_i})
\longrightarrow
H^{2k}(X,K_X\otimes L^{m_i})
\]
is surjective. Since the target is nonzero, we get
\[
H^{2k}(X,\mathcal I_{m_i}\otimes K_X\otimes L^{m_i})\neq 0.
\]
Now apply the pseudo-effective Hard Lefschetz theorem with multiplier ideals, Theorem~\ref{thm:DPS-HL}, to the pseudo-effective line bundle \((L^{m_i},h^{m_i})\). We obtain
\[
H^0\!\left(X,\Omega_X^{n-2k}\otimes L^{m_i}\otimes \mathcal I_{m_i}\right)\neq 0.
\]
Since \(\mathcal I_{m_i}\hookrightarrow \mathcal O_X\), this gives
\[
H^0\!\left(X,\Omega_X^{n-2k}\otimes L^{m_i}\right)\neq 0
\]
for infinitely many \(m_i\). By \cite[Proposition~2.6]{AH22}, there exists a line bundle \(M\) on \(X\) and an unbounded subsequence \(m_{i_j}\) such that
\[
H^0(X,M\otimes L^{m_{i_j}})\neq 0
\]
for every \(j\). Since \(X\) contains no codimension-one analytic subsets, every nonzero section of a line bundle is nowhere vanishing. Hence
\[
M\otimes L^{m_{i_j}}\simeq \mathcal O_X
\]
for every \(j\). Taking two distinct indices \(j_1\neq j_2\), we obtain
\[
L^{m_{i_{j_1}}-m_{i_{j_2}}}\simeq \mathcal O_X.
\]
This contradicts \(c_1(L)_{\mathbb R}\neq 0\). Therefore \(X\) must contain a codimension-one analytic subset.
\end{proof}
\begin{lemma}\label{thm:nef-positive-euler-torsion}
Let \(X\) be a compact K\"ahler fourfold. Assume that \(K_X\) is nef and that
\(X\) contains no irreducible analytic subvarieties of codimension \(1\) or
\(2\). If
\[
    \chi(X,\mathcal O_X)>0,
\]
then \(K_X\) is torsion.
\end{lemma}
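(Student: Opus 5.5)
The plan is to argue by contradiction and apply Proposition~\ref{prop:propleq1} with \(L=K_X\), following Steps 3--4 of the proof idea for Lemma~\ref{thm:mainnef}. Suppose \(K_X\) is not torsion. We first reduce to \(c_1(K_X)_{\mathbb R}\neq 0\). If \(c_1(K_X)_{\mathbb R}=0\), then \(K_X\) is numerically trivial. On a compact K\"ahler manifold with \(c_1=0\) in real cohomology, the Beauville--Bogomolov decomposition (in its Yau/Beauville form) shows that \(K_X\) is torsion, which contradicts our assumption. Hence \(c_1(K_X)_{\mathbb R}\neq 0\).

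Next we fix a metric and check hypothesis (1) of Proposition~\ref{prop:propleq1}. Since \(K_X\) is nef, it is pseudo-effective, so we choose a singular Hermitian metric \(h\) on \(K_X\) with semipositive curvature current and set \(\mathcal I_m=\mathcal I(h^m)\). Each \(\mathcal I_m\) is a nonzero coherent ideal. Its cosupport \(V(\mathcal I_m)\) is a proper analytic subset. Because \(X\) has no subvarieties of codimension \(1\) or \(2\), every component of \(V(\mathcal I_m)\) has dimension at most \(1\). This gives hypothesis (1).

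For hypothesis (2), Serre duality gives \(\chi(X,K_X)=\chi(X,\mathcal O_X)\) in dimension \(4\), and this is \(>0\) by assumption. It remains to show \(\chi(X,K_X^{m+1})\geq 0\) for all \(m\geq 1\). By Hirzebruch--Riemann--Roch we have
\[
\chi(X,K_X^{k})=\chi(X,\mathcal O_X)+\tfrac{k(k-1)}{24}\,c_1(K_X)^2\cdot c_2(X)+\tfrac{k^2(k-1)^2}{24}\,c_1(K_X)^4 .
\]
This expansion is valid because the odd-order terms cancel by Serre symmetry \(\chi(K^k)=\chi(K^{1-k})\), so the polynomial is symmetric about \(k=1/2\). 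Nefness gives \(c_1(K_X)^4\geq 0\) as a limit of Kähler classes. Liu's Miyaoka--Yau inequality \cite[Theorem~1.1]{LiuMY} gives \(c_1(K_X)^2\cdot c_2(X)\geq 0\). Since \(k(k-1)\geq 0\) for every integer \(k\), we get \(\chi(X,K_X^{k})\geq\chi(X,\mathcal O_X)>0\) for all \(k\), and in particular for \(k=m+1\).

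Proposition~\ref{prop:propleq1} then says that \(X\) contains a codimension-one analytic subset, contradicting the hypothesis. Hence \(K_X\) is torsion.

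The main obstacle is the numerical input, namely the validity of the HRR expansion in this symmetric form and the sign of \(c_1^2 c_2\) from Liu's theorem in the non-projective Kähler setting. The reduction from numerically trivial to torsion is standard but should be cited carefully.
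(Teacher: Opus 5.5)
Your proof is correct, and its core is the paper's argument. You argue by contradiction through Proposition~\ref{prop:propleq1} with $L=K_X$. You get $\dim V(\mathcal I_m)\leq 1$ from the absence of divisors and surfaces. You get $\chi(X,K_X^{m+1})\geq\chi(X,\mathcal O_X)>0$ from Hirzebruch--Riemann--Roch, nefness, and Liu's Miyaoka--Yau inequality \cite{LiuMY}.

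Two small precision points:
\begin{itemize}
\item Liu's inequality literally gives $c_1(K_X)^2\cdot c_2(X)\geq \tfrac25\, c_1(K_X)^4$. You need $c_1(K_X)^4\geq 0$ to conclude nonnegativity, which you do have from nefness.
\item The Serre-symmetry remark only shows that $\chi(X,K_X^k)$ is a polynomial in $k(k-1)$. The two coefficients still come from the Todd class, although the formula you wrote is correct.
\end{itemize}

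The one genuine divergence is the reduction to $c_1(K_X)_{\mathbb R}\neq 0$. The paper first invokes Lemma~\ref{thm:0or4} and Lemma~\ref{lem:q-geq-four-torus} to force $q(X)=0$. Then $\operatorname{Pic}^0(X)=0$, so a line bundle whose integral Chern class is torsion is itself torsion. You instead use Beauville's theorem, which rests on Yau's theorem: $c_1(X)_{\mathbb R}=0$ on a compact K\"ahler manifold forces $K_X$ to be torsion (a finite \'etale cover has trivial canonical bundle, and the norm map then gives torsion downstairs).

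Your route makes this lemma independent of the irregularity dichotomy and of the no-subvarieties hypothesis at that step, at the cost of a heavier analytic input. The paper's route stays elementary here by reusing the Albanese analysis it has already carried out.
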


\begin{proof}
Assume, for contradiction, that \(K_X\) is not torsion. By Lemma~\ref{thm:0or4}, we have
\[
    q(X)=0
    \quad\text{or}\quad
    q(X)=4.
\]
If \(q(X)=4\), then Lemma~\ref{lem:q-geq-four-torus} gives that \(X\) is a
complex torus. Hence
\[
    K_X\simeq \mathcal O_X,
\]
contradicting the assumption that \(K_X\) is not torsion. Therefore
\[
    q(X)=0.
\]
We claim that
\[
    c_1(K_X)_{\mathbb R}\neq 0.
\]
Indeed, if \(c_1(K_X)_{\mathbb R}=0\), then \(c_1(K_X)\in H^2(X,\mathbb Z)\)
is torsion. Thus, for some \(r>0\),
\[
    c_1(K_X^r)=0.
\]
Since \(q(X)=0\), we have \(\operatorname{Pic}^0(X)=0\). Hence every
topologically trivial line bundle is trivial, and therefore
\[
    K_X^r\simeq \mathcal O_X.
\]
This contradicts the assumption that \(K_X\) is not torsion. Hence
\[
    c_1(K_X)_{\mathbb R}\neq 0.
\]
We now apply Proposition~\ref{prop:propleq1} with
\[
    L:=K_X.
\]
Since \(K_X\) is nef, it is pseudo-effective. Choose a singular Hermitian
metric \(h\) on \(K_X\) with semipositive curvature current, and set
\[
    \mathcal I_m:=\mathcal I(h^m).
\]
Each \(\mathcal I_m\) is a nonzero coherent ideal sheaf. Hence its cosupport
\[
    V(\mathcal I_m)
\]
is a proper analytic subset of \(X\). Since \(X\) has no codimension-one or
codimension-two analytic subvarieties, every irreducible component of
\(V(\mathcal I_m)\) has codimension at least \(3\). As \(\dim X=4\), this gives
\[
    \dim V(\mathcal I_m)\leq 1
\]
for every \(m\geq 1\). It remains to check the Euler characteristic hypotheses in
Proposition~\ref{prop:propleq1}. Since \(\dim X=4\), Serre duality gives
\[
    \chi(X,K_X)=\chi(X,\mathcal O_X)>0.
\]
Next, for \(s\geq 1\), Hirzebruch--Riemann--Roch gives
\[
    \chi(X,K_X^s)
    =
    \chi(X,\mathcal O_X)
    +
    \frac{s(s-1)}{24}\,c_1(K_X)^2\cdot c_2(X)
    +
    \frac{s^2(s-1)^2}{24}\,c_1(K_X)^4.
\]
Since \(K_X\) is nef,
\[
    c_1(K_X)^4\geq 0.
\]
Moreover, by the Miyaoka--Yau inequality for compact K\"ahler manifolds with
nef canonical bundle \cite[Theorem~1.1]{LiuMY}, we have
\[
    \bigl(2(n+1)c_2(X)-n c_1(X)^2\bigr)\cdot c_1(K_X)^{n-2}\geq 0.
\]
Taking \(n=4\), and using \(c_1(X)=-c_1(K_X)\), this becomes
\[
    \bigl(10c_2(X)-4c_1(K_X)^2\bigr)\cdot c_1(K_X)^2\geq 0.
\]
Therefore
\[
    c_1(K_X)^2\cdot c_2(X)
    \geq
    \frac{2}{5}c_1(K_X)^4
    \geq 0.
\]
Hence, for every \(s\geq 1\),
\[
    \chi(X,K_X^s)\geq \chi(X,\mathcal O_X)>0.
\]
In particular, for every \(m\geq 1\),
\[
    \chi(X,K_X\otimes L^m)
    =
    \chi(X,K_X^{m+1})
    \geq 0.
\]
Thus all hypotheses of Proposition~\ref{prop:propleq1} are satisfied with
\(L=K_X\). Proposition~\ref{prop:propleq1} then implies that \(X\) contains a
codimension-one analytic subset. This contradicts the hypothesis that \(X\)
contains no codimension-one analytic subvarieties. Therefore \(K_X\) must be torsion.
\end{proof}
\begin{proof}[Proof of Lemma \ref{thm:mainnef}]
    We divide the proof in cases depending on $\chi(X,\mathcal{O}_X)$.
    \begin{description}
        \item[Case 1 ($\chi(X,\mathcal{O}_X)\leq 0$):] This case follows from Corollary \ref{cor:euler0}
        \item[Case 2 ($\chi(X,\mathcal{O}_X)> 0$):] This case follows from Lemma \ref{thm:nef-positive-euler-torsion}.
    \end{description}
\end{proof}
\begin{proof}[Proof of Lemma \ref{lem:pseff-canonical-nef-no-divisors-surfaces}:]
Suppose, for contradiction, that \(K_X\) is not nef. Since \(K_X\) is
pseudo-effective and \(\dim X=4\), Cao--Höring
\cite[Corollary~1.4]{CaoHoering2020} gives a rational curve
\[
    f:\mathbb P^1\longrightarrow X
\]
such that
\[
    K_X\cdot f(\mathbb P^1)<0.
\]
After replacing \(f\) by the normalization of its image, we may assume that
\(f\) is generically injective onto its image. Put
\[
    C:=f(\mathbb P^1).
\]
Then
\[
    K_X\cdot C<0.
\]
Let \(H\) be an irreducible component of
\(\operatorname{Hol}(\mathbb P^1,X)\) containing \(f\). By the standard
deformation estimate for holomorphic maps
\cite{Horikawa1973},
\[
    \dim_f H
    \geq
    \chi(\mathbb P^1,f^*T_X).
\]
By Riemann--Roch on \(\mathbb P^1\),
\[
    \chi(\mathbb P^1,f^*T_X)
    =
    \operatorname{rk}(T_X)+\deg(f^*T_X)
    =
    4-K_X\cdot C.
\]
Since \(K_X\cdot C<0\), it follows that
\[
    \dim_f H\geq 5.
\]
The group \(\operatorname{Aut}(\mathbb P^1)\) has dimension \(3\). Since
\(f\) is generically injective onto its image, the maps near \(f\) with the
same image cycle as \(f\) are obtained, up to a finite ambiguity, by
reparametrization. Hence the corresponding germ of image cycles has dimension
at least
\[
    \dim_f H-3\geq 2.
\]
In particular, after replacing \(H\) by a suitable analytic curve germ through
\(f\), we obtain a holomorphic family
\[
    f_t:\mathbb P^1\longrightarrow X,\qquad t\in(\Delta,0),
\]
with \(f_0=f\), such that the effective one-cycles
\[
    \Gamma_t:=(f_t)_*[\mathbb P^1]
\]
are not all equal. Let \(\omega\) be a Kähler form on \(X\). Since \(K_X\cdot C<0\), choose
\(0<\varepsilon\ll 1\) such that
\[
    \bigl(c_1(K_X)+\varepsilon[\omega]\bigr)\cdot C<0.
\]
Set
\[
    \alpha:=c_1(K_X)+\varepsilon[\omega].
\]
Since \(c_1(K_X)\) is pseudo-effective, the class \(\alpha\) is big: indeed,
if \(R\in c_1(K_X)\) is a closed positive current, then \(R+\varepsilon\omega\)
is a Kähler current in the class \(\alpha\). By Demailly's regularization
theorem for Kähler currents, we may choose such a current \(T\in\alpha\) with
analytic singularities
\cite[Theorem~1.1]{Demailly1992}; see also
\cite[Section~3]{DemaillyPaun2004}. Let
\[
    S:=\operatorname{Sing}(T).
\]
Then \(S\subsetneq X\) is an analytic subset. Since \(X\) contains neither
divisors nor surfaces, every proper analytic subset of \(X\) has dimension at
most \(1\). Hence
\[
    \dim S\leq 1.
\]
After shrinking \(\Delta\), the maps \(f_t\) form a continuous family, so the
homology class \((f_t)_*[\mathbb P^1]\) is locally constant. Therefore
\[
    \alpha\cdot \Gamma_t
    =
    \alpha\cdot \Gamma_0
    =
    \alpha\cdot C
    <0
\]
for all \(t\in(\Delta,0)\). We claim that
\[
    \operatorname{Supp}(\Gamma_t)\subset S
\]
for every \(t\). Indeed, suppose that
\(\operatorname{Supp}(\Gamma_t)\not\subset S\). Since \(T\) has analytic
singularities, the pullback \(f_t^*T\) is then a well-defined positive current
on \(\mathbb P^1\). Hence
\[
    0
    \leq
    \int_{\mathbb P^1} f_t^*T
    =
    \alpha\cdot \Gamma_t
    <0,
\]
a contradiction. Thus every cycle \(\Gamma_t\) is supported on \(S\). But \(S\) is a compact analytic subset of dimension at most \(1\). Hence it
has only finitely many irreducible curve components, and the space of effective
one-cycles supported on \(S\) is discrete. Therefore a connected analytic
family of effective one-cycles supported on \(S\) must be locally constant.
This contradicts the choice of the family \(\{\Gamma_t\}\), which was not
locally constant. The contradiction shows that \(K_X\) is nef.
\end{proof}

\begin{proof}[Proof of Theorem \ref{thm:thmmain}]
    Follows from Lemma \ref{lem:pseff-canonical-nef-no-divisors-surfaces} and Lemma \ref{thm:mainnef}.
\end{proof}
Using this we can prove a version of Conjecture \ref{conj:simple-kahler}.
\begin{corollary}
    Let $X$ be a Simple compact K\"ahler fourfold, such that the only subvarieties are curves. If $K_X$ is pseudo--effective then $K_X$ is torsion.
\end{corollary}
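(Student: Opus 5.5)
The plan is to reduce the corollary directly to Theorem~\ref{thm:thmmain}. The hypothesis that ``the only subvarieties are curves'' means that every proper irreducible positive-dimensional analytic subvariety of \(X\) has dimension \(1\). Since \(\dim X=4\), \(X\) then contains no irreducible analytic subvarieties of dimension \(3\) or \(2\). Equivalently, \(X\) has no subvarieties of codimension \(1\) and none of codimension \(2\). These are exactly the two geometric hypotheses of Theorem~\ref{thm:thmmain}. The remaining hypothesis, pseudo-effectivity of \(K_X\), is assumed.

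The steps I would carry out are as follows.

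First, I would unwind the definitions. A point is a codimension-\(4\) subvariety and is harmless. Any divisor or surface would contradict the assumption that curves are the only positive-dimensional proper subvarieties.

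Second, I would apply Lemma~\ref{lem:pseff-canonical-nef-no-divisors-surfaces} to conclude that \(K_X\) is nef. Then Lemma~\ref{thm:mainnef} gives that \(K_X\) is torsion. This chain is exactly the proof of Theorem~\ref{thm:thmmain}, so one could equally cite the theorem directly.

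The simplicity assumption is not needed for the torsion conclusion. It is compatible with the hypotheses, but the argument uses only the absence of divisors and surfaces. It only becomes relevant if one wants to go further toward Conjecture~\ref{conj:simple-kahler}. In that case, one would combine the torsion statement with the Beauville--Bogomolov decomposition via \cite[Theorem 2.8]{Vikash2026ClassificationOP}, and use simplicity to rule out product factors.

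I expect no real obstacle. The only point needing care is interpretive: making sure that ``the only subvarieties are curves'' includes the case where there are no curves at all. It should, since the hypotheses of Theorem~\ref{thm:thmmain} are only exclusions. Under this reading the hypotheses of Theorem~\ref{thm:thmmain} are verified verbatim.
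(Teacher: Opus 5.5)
Your proposal is correct and matches the paper. The paper's proof is the one-line remark that the corollary is an immediate consequence of Theorem~\ref{thm:thmmain}. Your check that ``only curves'' excludes codimension-$1$ and codimension-$2$ subvarieties, with simplicity unused, is exactly the implicit verification. Your unpacking of that theorem into Lemma~\ref{lem:pseff-canonical-nef-no-divisors-surfaces} followed by Lemma~\ref{thm:mainnef} is also how the paper proves it.
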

\begin{proof}
    Immediate consequence of Theorem \ref{thm:thmmain}.
\end{proof}
\section*{Acknowledgements}

I am grateful to my advisors Tatiana Bandman, John Lesieutre, and Yuriy Zarhin for many helpful discussions and for their guidance. I am also grateful to Frédéric Campana for helpful comments. I would like to thank several graduate students at Penn State, including Andy B. Day, Eugene Henninger-Voss, Neelarnab Raha, Satwata Hans, Xingkai Wang, and Louis Diaz, for helpful discussions.

\bigskip

\noindent\textsc{Pisya Vikash}\\
Department of Mathematics\\
The Pennsylvania State University\\
\textit{Email address}: \texttt{pmv5172@psu.edu}.

\end{document}